\newtheorem{theorem}{Theorem}
\newtheorem{proposition}{Proposition}
\newtheorem{definition}{Definition}
\newtheorem{lemma}{Lemma}
\title{Parameterized proximal-gradient algorithms for $L{1}/L{2}$ sparse signal recovery\thanks{This work was supported by the National Science Foundation of China under grant 11971499, and by the Opening Project of Guangdong Province Key Laboratory of Computational Science at the Sun Yat-sen University under grant 2021001, and by the Guangdong Province Key Laboratory of Computational Science at Sun Yat-sen University under grant 2020B1212060032, and by the Natural Science Foundation of Shandong Province, China under grant ZR2021QA104.}}
\author{ {Na Zhang}\thanks{Department of Applied Mathematics, College of Mathematics and Informatics, South China Agricultural University, Guangzhou 510642, P. R. China. } \\
	\And
	{Xinrui Liu}\thanks{Corresponding author. School of Mathematics and Statistics, Shandong Normal University, Jinan 250358, P. R. China (xinrui23@sdnu.edu.cn).} \\
        \And
	{Qia Li}\thanks{Guangdong Province Key Laboratory of Computational Science, School of Computer Science and Engineering, Sun Yat-sen University, Guangzhou 510275, P. R. China.} \\
}
\begin{document}
\maketitle

\begin{abstract}
The ratio of $\ell_{1}$ and $\ell_{2}$ norms ($\ell_{1}/\ell_{2}$), serving as a sparse promoting function, receives considerable attentions recently due to its effectiveness for sparse signal recovery. In this paper, we propose an $\ell_{1}/\ell_{2}$ based penalty model for recovering sparse signals from noiseless or noisy observations. It is proven that  stationary points of the proposed problem tend to those of the elliptically constrained $\ell_{1}/\ell_{2}$ minimization problem as the smoothing parameter goes to zero. Moreover, inspired by the parametric approach for the fractional programming, we design a parameterized proximal-gradient algorithm (PPGA) as well as its line search counterpart (PPGA$\_$L) for solving the proposed model. The closed-form solution of the involved proximity operator is derived, which enable the efficiency of the proposed algorithms. We establish the global  convergence of the entire sequences generated by PPGA and PPGA$\_$L with monotone objective values by taking advantage of the fact that the objective of the proposed model is a KL function. Numerical experiments show the efficiency of the proposed algorithms over the state-of-the-art methods in both noiseless and noisy sparse signal recovery problems. 
\end{abstract}

\keywords{compressed sensing \and proximal algorithms \and sparsity \and $L_{1}/L_{2}$ minimization \and parametric approach.}


\section{Introduction}\label{S1}
Compressed sensing (CS) is an advance in signal acquisition and attracts much attention in many research areas, such as signal processing, image processing and information theory \cite{duarte2013spectral,laska2011democracy,Tao2006Stable,Chun2014Non,Donoho2006Compressed}. Let $x_{0}\in\mathbb{R}^{n}$ be a sparse or approximately sparse signal and $A\in\mathbb{R}^{m\times n} (m<<n)$ be the sensing matrix. CS aims at recovering the sparse signal $x_{0}$ from a small number of its possibly noisy measurements $b = Ax_{0}+e$, where $e\in\mathbb{R}^{m}$ is the noise vector.

When there is no noise in the transmission, i.e., $b = Ax_{0}$, the CS problem can be mathematically formulated as 
\begin{equation}\label{model_origin}
{\rm min}\left\{\|x\|_{0}:Ax = b,x\in\mathbb{R}^{n}\right\},
\end{equation} 
where $\|\cdot\|_{0}$ is the $\ell_{0}$ ``norm'' which counts the number of nonzero entries of the vector. Since (\ref{model_origin}) is NP-hard \cite{Natarajan1995Sparse} to solve, a popular approach in CS is to replace $\ell_{0}$ norm by the convex $\ell_{1}$ norm, and then many efficient algorithms were designed for solving the problem \cite{Chen2001Atomic,Figueiredo2007Gradient,Goldstein2009split,Tendero2021Algorithm}. It has been demonstrated that a sparse signal $x_{0}$ can be recovered exactly by minimizing the $\ell_{1}$ norm over $A^{-1}\{b\}$ if $x_{0}$ is sufficiently sparse and the matrix $A$ satisfies the restricted isometry property (RIP) \cite{Candes2005Decoding,Chen2001Atomic}. In recent years, nonconvex approximation functions of the $\ell_{0}$ norm attract considerable amount of attentions due to their shaper approximations to $\ell_{0}$ compared with the $\ell_{1}$ norm. Some popular nonconvex functions include $\ell_{p}(0<p<1)$ \cite{Antil2022Sparse,Chartrand2007Exact,Lai2013Improved,Xu2012regularization}, capped $\ell_{1}$ \cite{Bian2020smoothing,Li2013Capped}, transformed $\ell_{1}$ \cite{Natarajan1995Sparse,Zhang2017Minimization,Zhang2018Minimization}, $\ell_{1}-\ell_{2}$ \cite{BI2022337,Lou2015Computing,Yin2014Ratio} and so on.

We study in this paper the ratio of the $\ell_{1}$ and $\ell_{2}$ norms ($\ell_{1}/\ell_{2}$) to approximate the desired $\ell_{0}$ norm. The $\ell_{1}/\ell_{2}$ function may date back to \cite{Hoyer2002coding} as a sparsity measure. It was explicitly pointed out \cite{Hurley2009Comparing} that similar with the $\ell_{0}$ norm, $\ell_{1}/\ell_{2}$ is a scale-invariant measure which may allow the $\ell_{1}/\ell_{2}$ based model to reconstruct signals and images \cite{Wang2022Minimizing,Wang2021Limited} with high dynamic range. In order to show the advantage of the $\ell_{1}/\ell_{2}$ function, the authors of \cite{Rahimi-Wang-Dong-Lou} also exhibited a specific example in which the ground truth signal $x_{0}$ is a solution of 
\begin{equation}\label{model_ratio}
{\rm min}\left\{\frac{\|x\|_{1}}{\|x\|_{2}}:Ax = b,x\in\mathbb{R}^{n}\right\},
\end{equation}
while $x_{0}$ is not a solution of the $\ell_{1}$, $\ell_{p}(p = 1/2)$ or $\ell_{1}-\ell_{2}$ based models. The equivalence between $\ell_{1}/\ell_{2}$ and $\ell_{0}$ for reconstructing signals under suitable conditions was studied in \cite{Esser2013method,Xu2021Analysis,Yin2014Ratio,Zhou2021Minimization}. Since $\ell_{1}/\ell_{2}$ is nonconvex and nonsmooth, it is difficult to find a global minimizer of problem (\ref{model_ratio}) and many algorithms are proposed to search for a stationary point of problem (\ref{model_ratio}). In \cite{Rahimi-Wang-Dong-Lou}, the authors tried to utilize the alternating direction method of multipliers (ADMM) to solve problem (\ref{model_ratio}) and showed its efficiency empirically in numerical simulations. It has been proven in \cite{Wang2021Limited} that under mild conditions the sequence generated by ADMM has a subsequence convergent to a stationary point of (\ref{model_ratio}). Three accelerated algorithms for solving (\ref{model_ratio}), namely, the $\ell_{1}/\ell_{2}$ minimization via bisection search (BS) and the $\ell_{1}/\ell_{2}$ minimization via adaptive selection method (A1 or A2), were proposed in \cite{Chao2019Accelerated} and their subsequential convergence were established.  

Generally, there is noise in the measurement in practice. When the noise is assumed to be white Gaussian noise, it is customary to relax the equality constraint $Ax = b$ to an inequality constraint \cite{Tao2006Stable} $\|Ax-b\|_{2}\le \epsilon$ for $0<\epsilon<\|b\|_{2}$. Thus, the $\ell_{1}/\ell_{2}$ based model for reconstructing sparse signals from the noisy measurements can be formulated as
\begin{equation}\label{model_ratio_constraint}
{\rm min}\left\{\frac{\|x\|_{1}}{\|x\|_{2}}:\|Ax-b\|_{2}\le \epsilon,x\in\mathbb{R}^{n}\right\}.
\end{equation}
The existence of solutions of problem  (\ref{model_ratio_constraint}) can be theoretically guaranteed \cite{Zeng2021Analysis} by assuming the kernel of $A$ has the s-spherical section property \cite{Vavasis2009Derivation,Zhang2013Theory} and there exists $\widetilde{x}\in\mathbb{R}^{n}$ such that $\|\widetilde{x}\|_{0}<m/s$ and $\|A\widetilde{x}-b\|_{2}\le\epsilon$ for some $s>0$.
Although algorithms for solving (\ref{model_ratio_constraint}) with $\ell_{1}$ norm or $\ell_{p}(0<p<1)$ quasi-norm in place of $\ell_{1}/\ell_{2}$ have been extensively studied in \cite{Chen2016Penalty,She2016dual,Berg2008Probing}, these algorithms are not directly applicable for solving (\ref{model_ratio_constraint}) due to the fractional objective. In \cite{Zeng2021Analysis}, the authors applied the moving-balls-approximation (MBA) techniques to solve problem (\ref{model_ratio_constraint}) and showed the global sequential convergence of the proposed algorithm.  Another popular model to deal with noisy compressed sensing is  the  $\ell_{1}/\ell_{2}$ regularized least squares problem, whose objective is the sum of $\ell_1/\ell_2$ and the square of the distance between $Ax$ and $b$. Very recently, \cite{TaoMinimization2022} derived the analytic solution of the proximity operator of $\ell_1/\ell_2$ which relies on two unknown values related to solutions of the problem. Then, by appropriately splitting the variable into two blocks, nonconvex ADMM with global convergence was applied to solve the $\ell_{1}/\ell_{2}$ least squares problem\cite{TaoMinimization2022}.

In this paper, we focus on  the $\ell_{1}/\ell_{2}$ based signal recovery from noiseless or noisy measurements. First, it should be noted that the optimal value of problem (\ref{model_ratio_constraint}) may be not attainable for some $A, b$ and $\epsilon$. For instance, let $A$ be a $2\times 3$ matrix whose first row is $(1,0,0)$ and second row is $(0,1,0)$, $b=(1,1)^\top$ and $\epsilon=0$. One can check that the optimal value of problem(\ref{model_ratio_constraint}) is 1. However, the objective value is strictly bigger than 1 at any feasible point of the problem. In order to ensure the existence of optimal solutions of the problem as long as the constraint set is nonempty,  it is straightforward to  restrict the variable in a ball. This leads to the following problem      
 
\begin{equation}\label{l1/l2_minimization_problem_constraint}
{\rm min}\left\{\frac{\|x\|_{1}}{\|x\|_{2}}:\|Ax-b\|_{2}\le \epsilon,\|x\|_{2}\le d, x\in\mathbb{R}^{n}\right\},
\end{equation}
where $d>0$. We also note that the parameter $\epsilon$ may impact the  solution of problem (\ref{l1/l2_minimization_problem_constraint}) significantly. Particularly, if $A$ does not have full row rank and $b$ is not in the range of $A$, the constraint set of problem (\ref{l1/l2_minimization_problem_constraint}) will be empty when $\epsilon$ is small enough. To overcome this deficiency, we propose to replace the elliptic constraint $\{x\in\mathbb{R}^n: \|Ax-b\|_2\le \epsilon\}$ with a smooth fractional penalty function and then obtain an $\ell_1/\ell_2$ penalty problem, whose optimal value is definitely attainable. Actually, the proposed $\ell_{1}/\ell_{2}$ penalty problem can be viewed as an approximate model to problem (\ref{l1/l2_minimization_problem_constraint}) through smoothing the nonsmooth indicator function on the elliptic constraint by a smooth fractional function. We  prove that as the smoothing parameter goes to zero, stationary points of the proposed penalty problem tends to those of problem  (\ref{l1/l2_minimization_problem_constraint}). Observing that the objective function of the proposed penalty problem is a ratio of two functions, we develop a parameterized proximal-gradient algorithm and its line search counterpart for solving the problem, inspired by the parametric approach for the fractional programming.  Numerical experiments show that on noise-free cases, our proposed algorithms perform comparable with the state-of-the-art algorithms for solving $\ell_{1}/\ell_{2}$ signal recovery model in terms of the success rate. For noisy data, on one hand, the proposed algorithms can obtain competitive recovery results but use much less CPU time compared with MBA. On the other hand, compared with other sparse promoting models, our proposed algorithms perform  better in most cases than the competing algorithms in terms of the signal recovery accuracy and computing time. The main contributions of this paper are summarized below:

(i) In order to recover sparse signals from noiseless or noisy measurements, we propose an $\ell_1/\ell_2$ based penalty problem, which is derived by replacing the elliptic constraint of problem (\ref{l1/l2_minimization_problem_constraint}) with a smooth fractional penalty function.  We prove that as the smoothing parameter goes to zero, the stationary points of the proposed penalty problem tend to those of problem (\ref{l1/l2_minimization_problem_constraint}).

(ii) We study a general structured single-ratio fractional optimization problem, of which the proposed penalty problem is a special case, and propose a parameterized proximal-gradient algorithm and its line search counterpart for solving it. When the proposed algorithms are applied to the proposed model, the analytic solution of the proximity operator of the corresponding function is derived, which enable the efficiency of the algorithms.

(iii) We establish that the entire sequence generated by the proposed parameterized proximal-gradient algorithm or its monotone line search counterpart converges globally to a stationary point of the proposed problem. Numerical simulations demonstrate the efficiency of the proposed algorithms.

The remaining part of this paper is organized as follows. In section \ref{S2}, we introduce notation and some necessary preliminaries. Section \ref{S3} is devoted to the proposed model. In section \ref{S4}, we propose the parameterized proximal-gradient algorithm and its line search counterpart for solving the proposed problem. The convergence results of the proposed algorithms are established in section \ref{S5}. We present in section \ref{S6} some numerical results for $\ell_{1}/\ell_{2}$ sparse signal recovery problem with and without noise. Finally, we conclude this paper in the last section.
\section{Notation and preliminaries}\label{S2}
We start by our prefered notation. Let $\mathbb{R}^{n}$ and $\mathbb{N}$ be the n-dimensional Euclidean space and the set of nonnegative integers respectively. By $\langle\cdot,\cdot\rangle$, $\|\cdot\|_{1}$ and $\|\cdot\|_{2}$, we denote the standard inner product, the $\ell_{1}$ norm and $\ell_{2}$ norm of vectors, respectively. For a positive integer $n$, we let $0_{n}$ be the $n$-dimensional zero vector. For $x\in\mathbb{R}$, let $[x]_{+}:={\rm max}\{0,x\}$ and $\lceil x\rceil:={\rm max}\{t\in\mathbb{N}:t\le x\}$. 

For any closed set $S\subseteq\mathbb{R}^{n}$, the distance from $x\in\mathbb{R}^{n}$ to $S$ is defined by ${\rm dist}(x,S):={\rm inf}\{\|x-z\|_{2}:z\in S\}$. We denoted by ${\rm int}S$ the set of all interior points of $S$. The indicator function on $S$ is defined by 
$$
\iota_{S}(x):= \left\{
\begin{aligned}
&0,  \quad  \quad \quad{\rm if} \ x\in S, \\
&+\infty, \quad \ {\rm otherwise}.
\end{aligned}
\right.
$$
Given a function $\varphi:\mathbb{R}^{n}\rightarrow \overline{\mathbb{R}}:=(-\infty,+\infty]$, we use ${\rm dom}(\varphi)$ to denote the domain of $\varphi$, that is, ${\rm dom}(\varphi):=\{x\in\mathbb{R}^{n}:\varphi(x)<+\infty\}$. The proximity operator of $\varphi$, denoted by ${\rm prox}_{\varphi}$, is defined at $x\in\mathbb{R}^{n}$ as ${\rm prox}_{\varphi}(x):={\rm arg\,min}\{\varphi(y)+\frac{1}{2}\|y-x\|_{2}^{2}:y\in\mathbb{R}^{n}\}$.

In the remaining part of this section, we present some preliminaries on Fr{\'e}chet subdifferential and limiting-subdifferential as well as the Kurdyka-\L ojasiewicz (KL) property. These concepts play an important role in our theoretical and algorithmic development.
\subsection{Fr{\'e}chet subdifferential and limiting-subdifferential} 
Let $\varphi:\mathbb{R}^{n}\rightarrow \overline{\mathbb{R}}$ be a proper function. The Fr{\'e}chet subdifferential of $\varphi$ at $x\in{\rm dom}(\varphi)$, denoted by $\hat{\partial}\varphi(x)$, is defined as 
$$
\hat{\partial}\varphi(x):=\left\{y\in\mathbb{R}^{n}:\mathop{\rm lim\,inf}\limits_{z\rightarrow x, z\neq x}\frac{\varphi(z)-\varphi(x)-\langle y,z-x\rangle}{\|z-x\|_{2}}\right\}.
$$
The set $\hat{\partial}\varphi(x)$ is convex and closed for $x\in{\rm dom}(\varphi)$. If $x\notin{\rm dom}(\varphi)$, we let $\hat{\partial}\varphi(x) = \emptyset$. We say $\varphi$ is Fr{\'e}chet subdifferentiable at $x\in\mathbb{R}^{n}$ when $\hat{\partial}\varphi(x)\neq 0$. We also need the notion of limiting-subdifferentials. The limiting-subdifferential of $\varphi$ at $x\in{\rm dom}(\varphi)$ is defined by 
$
\partial\varphi(x):=\{y\in\mathbb{R}^{n}:\exists x^{k}\rightarrow x, \varphi(x^{k})\rightarrow \varphi(x), y^{k}\in\hat{\partial}\varphi(x^{k})\rightarrow y\}.
$
It is straightforward that $\hat{\partial}\varphi(x) \subseteq \partial\varphi(x)$ for all $x\in\mathbb{R}^{n}$. We say $\varphi$ is regular at $x\in{\rm dom}(\varphi)$ if $\hat{\partial}\varphi(x) = \partial\varphi(x)$. Moreover, if $\varphi$ is convex, then $\hat{\partial}\varphi(x)$ and $\partial\varphi(x)$ reduce to the classical subdifferential in convex analysis, i.e.,
$$
\hat{\partial}\varphi(x) = \partial\varphi(x) = \{y\in\mathbb{R}^{n}: \varphi(z)-\varphi(x)-\langle y,z-x\rangle\ge 0, \forall z\in\mathbb{R}^{n}\}.
$$

We next recall some simple and useful calculus results on $\hat{\partial}$ and $\partial$. For any $\alpha>0$ and $x\in\mathbb{R}^{n}$, $\hat{\partial}(\alpha\varphi)(x) = \alpha\hat{\partial}\varphi(x)$ and $\partial(\alpha\varphi)(x) = \alpha\partial\varphi(x)$. Let $\varphi_{1}, \varphi_{2}: \mathbb{R}^{n}\rightarrow \overline{\mathbb{R}}$ be proper and lower semicontinuous and let $x\in {\rm dom}(\varphi_{1}+\varphi_{2})$, then, $\hat{\partial}\varphi_{1}(x)+\hat{\partial}\varphi_{2}(x)\subseteq \hat{\partial}(\varphi_{1}+\varphi_{2})(x)$. If $\varphi_{1}$ is strictly continuous at $x$ while $\varphi_{2}(x)$ is finite, then $\partial(\varphi_{1}+\varphi_{2})(x)\subseteq \partial\varphi_{1}(x)+\partial\varphi_{2}(x)$. If $\varphi_{2}$ is differentiable at $x$, then $\hat{\partial}\varphi_{2}(x) = \{\nabla\varphi_{2}(x)\}$ and $\hat{\partial}(\varphi_{1}+\varphi_{2})(x)= \hat{\partial}\varphi_{1}(x)+\nabla\varphi_{2}(x)$. Furthermore, if $\varphi_{2}$ is continuously differentiable at $x$,
then $\partial\varphi_{2}(x) = \{\nabla\varphi_{2}(x)\}$ and $\partial(\varphi_{1}+\varphi_{2})(x)= \partial\varphi_{1}(x)+\nabla\varphi_{2}(x)$.
We next recall some results of the Fr{\'e}chet subdifferential for the quotient of two functions. 
\begin{proposition}\label{P2.1}\cite{Na2020First}(Subdifferential calculus for quotient of two functions). Let $f_{1}:\mathbb{R}^{n}\rightarrow \overline{\mathbb{R}}$ be proper and $f_{2}:\mathbb{R}^{n}\rightarrow  \mathbb{R}$. Define $\rho:\mathbb{R}^{n}\rightarrow  \overline{\mathbb{R}}$ at $x\in\mathbb{R}^{n}$ as
\begin{equation}\label{subdiff_quo}
\rho(x):=\left\{
\begin{aligned}
&\frac{f_{1}(x)}{f_{2}(x)}, \quad {\rm if} \ x\in{\rm dom}(f_{1}) \ {\rm and} \ f_{2}(x) \neq 0,\\
&+\infty, \quad\  {\rm otherwise}.
\end{aligned}
\right.
\end{equation}
Let $x\in{\rm dom}(\rho)$ with $a_{1}:=f_{1}(x)\ge 0$ and $a_{2}:= f_{2}(x) > 0$. If $f_{1}$ is continuous at $x$ relative to ${\rm dom}(f_{1})$ and $f_{2}$ is locally Lipschitz continuous at $x$, then
$$
\hat{\partial}\rho(x) = \frac{\hat{\partial}(a_{2}f_{1}-a_{1}f_{2})(x)}{a_{2}^{2}}.
$$
Furthermore, if $f_{2}$ is differentiable at $x$, then
$$
\hat{\partial}\rho(x) = \frac{a_{2}\hat{\partial}f_{1}(x)-a_{1}\nabla f_{2}(x)}{a_{2}^{2}}.
$$
\end{proposition}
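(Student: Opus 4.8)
The plan is to reduce the Fr{\'e}chet subdifferential of the quotient $\rho$ to that of the single auxiliary function $g:=a_{2}f_{1}-a_{1}f_{2}$ through an elementary algebraic identity, and then to read off the second formula from the sum rule for $\hat{\partial}$ recalled above. First I would collect the basic properties of $g$: it is proper with ${\rm dom}(g)={\rm dom}(f_{1})$ (since $a_{2}>0$ and $f_{2}$ is finite-valued), it satisfies $g(x)=a_{2}a_{1}-a_{1}a_{2}=0$, and, because $f_{1}$ is continuous at $x$ relative to ${\rm dom}(f_{1})$ while $f_{2}$ is continuous at $x$ (being locally Lipschitz there), $g$ is continuous at $x$ relative to ${\rm dom}(g)$. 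Using $f_{2}(x)=a_{2}>0$ and the continuity of $f_{2}$, I would fix an open ball $B(x,\delta)$ on which $f_{2}>0$; there ${\rm dom}(\rho)={\rm dom}(f_{1})={\rm dom}(g)$, and for every $z\in{\rm dom}(\rho)\cap B(x,\delta)$ one has
\[
\rho(z)-\rho(x)=\frac{f_{1}(z)}{f_{2}(z)}-\frac{a_{1}}{a_{2}}=\frac{a_{2}f_{1}(z)-a_{1}f_{2}(z)}{a_{2}f_{2}(z)}=h(z)\bigl(g(z)-g(x)\bigr),
\]
where $h(z):=1/\bigl(a_{2}f_{2}(z)\bigr)$ is positive, is locally Lipschitz at $x$ as the reciprocal of a nonvanishing locally Lipschitz function, and satisfies $h(x)=1/a_{2}^{2}$.

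The core step is to prove that $y\in\hat{\partial}\rho(x)$ if and only if $a_{2}^{2}y\in\hat{\partial}g(x)$, which I would do straight from the definition of the Fr{\'e}chet subdifferential. Every $z\in B(x,\delta)\setminus{\rm dom}(f_{1})$ gives $\rho(z)=g(z)=+\infty$ with $a_{1}f_{2}(z)$ finite, so both difference quotients equal $+\infty$ at such $z$ and they influence neither $\liminf$. For $z\in{\rm dom}(f_{1})\cap B(x,\delta)$ with $z\neq x$, the identity above together with $h(x)a_{2}^{2}y=y$ yields
\[
\rho(z)-\rho(x)-\langle y,z-x\rangle=h(x)\bigl(g(z)-g(x)-\langle a_{2}^{2}y,z-x\rangle\bigr)+\bigl(h(z)-h(x)\bigr)\bigl(g(z)-g(x)\bigr),
\]
in which the last term is $o(\|z-x\|_{2})$: the factor $h(z)-h(x)$ is $O(\|z-x\|_{2})$ by local Lipschitz continuity of $h$, while $g(z)-g(x)\to0$ by continuity of $g$ at $x$ relative to ${\rm dom}(g)$. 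Dividing by $\|z-x\|_{2}$, letting $z\to x$, and using $h(x)>0$, I get
\[
\liminf_{z\to x,\,z\neq x}\frac{\rho(z)-\rho(x)-\langle y,z-x\rangle}{\|z-x\|_{2}}=h(x)\liminf_{z\to x,\,z\neq x}\frac{g(z)-g(x)-\langle a_{2}^{2}y,z-x\rangle}{\|z-x\|_{2}},
\]
so the left-hand side is nonnegative exactly when the right-hand side is. This gives $\hat{\partial}\rho(x)=\hat{\partial}g(x)/a_{2}^{2}=\hat{\partial}(a_{2}f_{1}-a_{1}f_{2})(x)/a_{2}^{2}$, the first claimed formula.

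For the second formula I would invoke the calculus rules recalled earlier: when $f_{2}$ is differentiable at $x$, so is $-a_{1}f_{2}$, with gradient $-a_{1}\nabla f_{2}(x)$, hence $\hat{\partial}(a_{2}f_{1}-a_{1}f_{2})(x)=\hat{\partial}(a_{2}f_{1})(x)-a_{1}\nabla f_{2}(x)=a_{2}\hat{\partial}f_{1}(x)-a_{1}\nabla f_{2}(x)$, where the last step uses $a_{2}>0$; dividing by $a_{2}^{2}$ completes the proof. I expect the main obstacle to be the core step. Because $\rho$ is a genuine ratio, $\rho(z)-\rho(x)$ couples $f_{1}$ and $f_{2}$ nonlinearly, and the whole argument hinges on the factorization $\rho(z)-\rho(x)=h(z)\bigl(g(z)-g(x)\bigr)$ and on showing that the perturbation $\bigl(h(z)-h(x)\bigr)\bigl(g(z)-g(x)\bigr)$ is negligible to first order — this is exactly where both hypotheses are used, continuity of $f_{1}$ relative to its domain to force $g(z)-g(x)\to0$ and local Lipschitz continuity of $f_{2}$ to make $h$ Lipschitz near $x$ — and one must also be careful that the points outside ${\rm dom}(f_{1})$ leave the $\liminf$ untouched and that multiplication by the positive constant $h(x)$ commutes with the $\liminf$.
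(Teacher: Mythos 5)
Your argument is correct: the paper itself offers no proof of this proposition (it is quoted from \cite{Na2020First}), so there is nothing internal to compare against, but your self-contained derivation is sound and is essentially the natural argument behind the cited result. The key identity $\rho(z)-\rho(x)=h(z)\bigl(g(z)-g(x)\bigr)$ with $g=a_{2}f_{1}-a_{1}f_{2}$, $g(x)=0$ and $h=1/(a_{2}f_{2})$ is verified correctly, the error term $\bigl(h(z)-h(x)\bigr)\bigl(g(z)-g(x)\bigr)$ is indeed $o(\|z-x\|_{2})$ precisely because $f_{2}$ is locally Lipschitz (so $h$ is Lipschitz near $x$, using that $f_{2}$ stays bounded away from $0$ there) and $f_{1}$ is continuous at $x$ relative to ${\rm dom}(f_{1})$ (so $g(z)-g(x)\to0$), and the points outside ${\rm dom}(f_{1})$ are handled properly since both difference quotients are $+\infty$ there. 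The passage to the second formula via the sum rule for a differentiable summand and positive homogeneity of $\hat{\partial}$ (with $a_{2}>0$) is also correct; note only that the hypothesis $a_{1}\ge 0$ is never actually used in your argument, which is harmless here.
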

\subsection{Kurdyka-\L ojasiewicz (KL) property}
We next recall the Kurdyka-\L ojasiewicz (KL) property.
\begin{definition} (KL property \cite{Hedy2009On}). A proper function $\varphi:\mathbb{R}\rightarrow  \overline{\mathbb{R}}$ is said to satisfy the KL property at $\hat{x}\in{\rm dom}(\partial\varphi)$ if there exist $\eta\in (0,+\infty]$, a neighborhood $\mathcal{O}$ of $\hat{x}$ and a continuous concave function $\varphi: [0,\eta)\rightarrow [0,+\infty]$, such that:\\
(i) $\varphi(0) = 0$,\\
(ii) $\varphi$ is continuously differentiable on $(0,\eta)$ with $\varphi^{'}>0$,\\
(iii) For any $x\in \mathcal{O} \cap \{x\in\mathbb{R}^{n}: \varphi(\hat{x})<\varphi(x)<\varphi(\hat{x})+\eta\}$ there holds $\varphi^{'}(\varphi(x)-\varphi(\hat{x})){\rm dist}(0,\partial\varphi(x))$\\$\ge 1$.
\end{definition}

A proper lower semicontinuous function $\varphi:\mathbb{R}\rightarrow  \overline{\mathbb{R}}$ is called a KL function if $\varphi$ satisfies the KL property at all points in ${\rm dom}(\partial\varphi)$. The notion of the KL property plays a crucial rule in analyzing the global sequential convergence; see \cite{Hedy2009On,Hedy2010Proximal,Hedy2013Convergence}. A framework for proving global sequential convergence using the KL property is provided in \cite{Hedy2013Convergence}. We review this result in the next proposition.

\begin{proposition}\label{KL_property}
Let $\varphi:\mathbb{R}\rightarrow  \overline{\mathbb{R}}$ be a proper lower semicontinuous function. Consider a sequence satisfies the following three conditions:\\
(i) (Sufficient decrease condition.) There exists $a > 0$ such that
$
\varphi(x^{k+1}) + a\|x^{k+1}-x^{k}\|_{2}^{2}\le \varphi(x^{k})
$
holds for any $k\in\mathbb{N}$;\\
(ii) (Relative error condition.) There exists $b > 0$ and $w^{k+1}\in\partial\varphi(x^{k+1})$ such that
$
\|w^{k+1}\|_{2}\le b\|x^{k+1}-x^{k}\|_{2}
$
holds for any $k\in\mathbb{N}$;\\
(iii) (Continuity condition.) There exists a subsequence $\{x^{k_{j}}: j \in \mathbb{N}\}$ and $x^{\star}$ such that
$
x^{k_{j}} \rightarrow x^{\star} \ {\rm and}\  \varphi(x^{k_{j}}) \rightarrow \varphi(x^{\star}), \ {\rm as}\ j\rightarrow \infty.
$
If $\varphi$ satisfies the KL property at $x^{\star}$, then $\sum_{k=1}^{\infty}\|x^{k}-x^{k-1}\|_{2}<+\infty$, $\mathop{\rm lim}\limits_{k\rightarrow \infty}x^{k} = x^{\star}$ and $0\in\partial\varphi(x^{\star})$.
\end{proposition}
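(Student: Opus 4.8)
The plan is to follow the now-standard abstract convergence recipe of \cite{Hedy2013Convergence}: convert the three hypotheses together with the KL inequality at $x^\star$ into a summable bound on the increments $\|x^{k+1}-x^k\|_2$, and then read off all three conclusions. I would first dispose of the degenerate case in which $\varphi(x^k)=\varphi(x^\star)$ for some $k$: the monotonicity coming from (i) then forces $\varphi(x^j)=\varphi(x^\star)$, hence $x^{j+1}=x^j$, for every $j\ge k$, so the sequence is eventually constant and the statement is immediate. Assuming from now on $\varphi(x^k)>\varphi(x^\star)$ for all $k$, note that (i) makes $\{\varphi(x^k)\}$ nonincreasing while (iii) provides a subsequence with $\varphi(x^{k_j})\to\varphi(x^\star)$, so the whole sequence decreases to $\varphi(x^\star)$; summing (i) over $k$ also gives $\sum_k\|x^{k+1}-x^k\|_2^2<\infty$, in particular $\|x^{k+1}-x^k\|_2\to 0$.

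The core of the argument invokes the KL property at $x^\star$ to get $\eta>0$, a neighbourhood $\mathcal{O}$, and a desingularizing function $\phi$; fix $r>0$ with the ball $B(x^\star,r)\subseteq\mathcal{O}$. Using $x^{k_j}\to x^\star$, $\varphi(x^k)\downarrow\varphi(x^\star)$, $\|x^{k+1}-x^k\|_2\to0$ and the continuity of $\phi$ with $\phi(0)=0$, I would choose an index $\ell$ along the subsequence $\{k_j\}$ so that simultaneously $\|x^\ell-x^\star\|_2<r/2$, $\varphi(x^\star)<\varphi(x^k)<\varphi(x^\star)+\eta$ for every $k\ge\ell$, and $\|x^\ell-x^{\ell-1}\|_2+\tfrac{b}{a}\phi(\varphi(x^\ell)-\varphi(x^\star))<r/2$. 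Then, for any $k\ge\ell$ with $x^k\in\mathcal{O}$, I combine concavity of $\phi$ (to bound $\phi(\varphi(x^k)-\varphi(x^\star))-\phi(\varphi(x^{k+1})-\varphi(x^\star))$ from below by $\phi'(\varphi(x^k)-\varphi(x^\star))(\varphi(x^k)-\varphi(x^{k+1}))$), the sufficient decrease (i) (to bound $\varphi(x^k)-\varphi(x^{k+1})\ge a\|x^{k+1}-x^k\|_2^2$), and the KL inequality with the relative error (ii) (to bound $\phi'(\varphi(x^k)-\varphi(x^\star))\ge 1/{\rm dist}(0,\partial\varphi(x^k))\ge 1/(b\|x^k-x^{k-1}\|_2)$). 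After the elementary estimate $2\sqrt{uv}\le u+v$ this yields
\[
2\|x^{k+1}-x^k\|_2\le\|x^k-x^{k-1}\|_2+\tfrac{b}{a}\left(\phi(\varphi(x^k)-\varphi(x^\star))-\phi(\varphi(x^{k+1})-\varphi(x^\star))\right).
\]

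With this inequality I would run an induction showing $x^k\in B(x^\star,r)$ for all $k\ge\ell$: if $x^\ell,\dots,x^k$ already lie in the ball (hence in $\mathcal{O}$), the displayed inequality holds at indices $\ell,\dots,k-1$, and telescoping it gives $\sum_{i=\ell+1}^{k}\|x^i-x^{i-1}\|_2\le\|x^\ell-x^{\ell-1}\|_2+\tfrac{b}{a}\phi(\varphi(x^\ell)-\varphi(x^\star))<r/2$, so $\|x^k-x^\star\|_2\le\|x^\ell-x^\star\|_2+r/2<r$. Letting $k\to\infty$ in the same telescoped bound shows $\sum_k\|x^k-x^{k-1}\|_2<\infty$, so $\{x^k\}$ is Cauchy and converges; its limit must equal the subsequential limit $x^\star$. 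Finally $w^{k+1}\in\partial\varphi(x^{k+1})$ with $\|w^{k+1}\|_2\le b\|x^{k+1}-x^k\|_2\to0$, while $x^{k+1}\to x^\star$ and $\varphi(x^{k+1})\to\varphi(x^\star)$, so the graph-closedness of the limiting subdifferential (immediate from its definition recalled in Section~\ref{S2}) gives $0\in\partial\varphi(x^\star)$.

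I expect the main obstacle to be exactly the trapping/induction step: the KL inequality can be applied at $x^k$ only once we know $x^k\in\mathcal{O}$, yet it is precisely the estimate derived from that inequality that keeps the iterates inside $\mathcal{O}$. Breaking this apparent circularity is what forces the quantitative choice of the starting index $\ell$ and the radius $r$ above, arranged so that the telescoped increment bound stays strictly below the remaining room $r/2$ in the ball; lining up those constants is the only genuinely delicate point, the rest being bookkeeping with monotone sequences and the subdifferential calculus already recalled in Section~\ref{S2}.
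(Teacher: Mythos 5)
Your proposal is correct, and it is exactly the standard Attouch--Bolte--Svaiter argument: the paper itself gives no proof of this proposition, stating it as a review of the abstract convergence framework of \cite{Hedy2013Convergence}, which is precisely the source your reconstruction follows (degenerate case, quantitative choice of the starting index and radius to break the trapping circularity, the concavity/KL/sufficient-decrease chain with $2\sqrt{uv}\le u+v$, the induction confining iterates to the ball, and closedness of the limiting subdifferential for $0\in\partial\varphi(x^{\star})$). So your proof matches the paper's (cited) approach and needs no changes.
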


\section{The proposed model}\label{S3}
As discussed in section \ref{S1}, the constraint set of problem (\ref{l1/l2_minimization_problem_constraint}) may be empty for some cases when $\epsilon$ is small enough. In this section, we first propose an $\ell_1/\ell_2$ based penalty problem by replacing the elliptic constraint of problem (\ref{l1/l2_minimization_problem_constraint}) with a smooth fractional penalty function. Then, we prove under mild conditions that stationary points of the proposed penalty problem tend to those of problem (\ref{l1/l2_minimization_problem_constraint}) as the smoothing parameter goes to zero.

Before introducing our proposed  model, we first recall the notation of Moreau envelope, which is a useful tool for smoothing functions. The Moreau envelope of a function $\phi:\mathbb{R}^{n}\rightarrow \bar{\mathbb{R}}$ with parameter $\lambda>0$, denoted by ${\rm env}_{\phi}^{\lambda}$, is defined at $x\in \mathbb{R}^{n}$ as
$$
{\rm env}_{\phi}^{\lambda}(x):= {\rm min}\left\{\phi(y) + \frac{1}{2\lambda}\|x-y\|_{2}^{2}: y\in\mathbb{R}^{n}\right\}.
$$
Let $C: = \{x\in\mathbb{R}^{m}:\|x-b\|_{2}\le \epsilon\}$. It is observed that, for $x\in\mathbb{R}^{m}$,
$$
\begin{aligned}
{\rm env}_{\iota_{C}}^{\lambda}(x) &= {\rm min}\left\{\frac{1}{2\lambda}\|x-y\|_{2}^{2}:\|y-b\|_{2}\le \epsilon \right\}\\
&= \left\{
\begin{aligned}
&0, \qquad \qquad \quad \ \, \qquad {\rm if} \ \|x-b\|_{2}\le \epsilon, \\
&\frac{1}{2\lambda}(\|x-b\|_{2}-\epsilon)^{2}, \ {\rm otherwise}, 
\end{aligned}
\right.\\
&=\frac{1}{2\lambda}(\|x-b\|_{2}-\epsilon)_{+}^{2}.
\end{aligned}
$$
It is known  \cite{Na2017On} that ${\rm env}_{\iota_{C}}^{\lambda}\circ A$ is Lipschitz continuously differentiable with a Lipschitz constant $\frac{1}{\lambda}\|A\|_{2}^{2}$ and 
$$
\begin{aligned}
\nabla({\rm env}_{\iota_{C}}^{\lambda}\circ A)(x) &= \frac{(\|Ax-b\|_{2}-\epsilon)_{+}}{\|Ax-b\|_{2}}A^{T}(Ax-b)/\lambda \\ &=\left\{
\begin{aligned}
&0_{n}, \qquad \qquad \qquad \qquad \qquad \qquad \quad {\rm if} \ \|Ax-b\|_{2}\le \epsilon, \\
&(1-\frac{\epsilon}{\|Ax-b\|_{2}})A^{T}(Ax-b)/\lambda, \quad  {\rm otherwise}.
\end{aligned}
\right.
\end{aligned}
$$

 Noting that $({{\rm env}_{\iota_{C}}^{\lambda}\circ A})/{\|\cdot\|_{2}}$ is continuously differentiable on $\mathbb{R}^{n}\backslash\{0_{n}\}$, we propose the following optimization problem for sparse signal recovery through smoothing $\iota_C\circ A$ of problem  (\ref{l1/l2_minimization_problem_constraint}) by $({{\rm env}_{\iota_{C}}^{\lambda}\circ A})/{\|\cdot\|_{2}}$
\begin{equation}\label{l1_l2_minimization_problem_constraint_envelop}
{\rm min}\left\{\frac{\lambda\|x\|_{1}+{\rm env}_{\iota_{C}}^{1}(Ax)}{\|x\|_{2}}:x\in D, x\neq 0_{n}\right\},
\end{equation}
where $D:=\{x\in\mathbb{R}^{n}:\|x\|_{2}\le d\}$. Problem (\ref{l1_l2_minimization_problem_constraint_envelop}) is actually a penalty problem for (\ref{l1/l2_minimization_problem_constraint}), utilizing $({{\rm env}_{\iota_{C}}^{\lambda}\circ A})/{\|\cdot\|_{2}}$ as the penalty function.   In the remaining part of this section, we dedicate to establishing the relationship between solutions and stationary points of problems  (\ref{l1/l2_minimization_problem_constraint}) and (\ref{l1_l2_minimization_problem_constraint_envelop}). Prior to that, we first show the existence of optimal solutions to problems (\ref{l1/l2_minimization_problem_constraint}) and (\ref{l1_l2_minimization_problem_constraint_envelop}). For convenient of presentation, we define $Q_{\lambda}:\mathbb{R}^{n}\rightarrow \bar{\mathbb{R}}$ at $x\in\mathbb{R}^{n}$ as 
$$
Q_{\lambda}(x):= \left\{
\begin{aligned}
&\frac{\lambda\|x\|_{1}+{\rm env}_{\iota_{C}}^{1}(Ax)}{\|x\|_{2}}, \qquad  {\rm if} \ x\in D \ {\rm and} \ x\neq 0_{n}, \\
&+\infty, \qquad \qquad\qquad \qquad \, {\rm else}.
\end{aligned}
\right.
$$ 
The following Lemma concerns the lower semicontinuity of function $Q_{\lambda}$ for any $\lambda>0$.
\begin{lemma}\label{lower_semicontinuous}
For any $\lambda>0$, $Q_{\lambda}$ is lower semicontinuous.
\end{lemma}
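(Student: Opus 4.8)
The plan is to write $Q_{\lambda}$ as a sum of two lower semicontinuous functions and then invoke the fact that such a sum is lower semicontinuous. First I would set $g:\mathbb{R}^{n}\to\mathbb{R}$ to be the numerator appearing in the definition of $Q_{\lambda}$, namely $g:=\lambda\|\cdot\|_{1}+{\rm env}_{\iota_C}^{1}\circ A$. Since $\|\cdot\|_{1}$ is continuous and ${\rm env}_{\iota_C}^{1}\circ A$ is continuously differentiable (as recalled above; alternatively this is immediate because $x\mapsto\frac12(\|Ax-b\|_{2}-\epsilon)_{+}^{2}$ is a composition of continuous maps), the function $g$ is continuous on all of $\mathbb{R}^{n}$. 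Moreover $g\ge 0$ everywhere, and at the origin $g(0_{n})=\frac12(\|b\|_{2}-\epsilon)_{+}^{2}>0$ because $0_{m}\notin C$, i.e. $\epsilon<\|b\|_{2}$; I expect this last fact to be the one that really matters.

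Next I would introduce the auxiliary function $G:\mathbb{R}^{n}\to\overline{\mathbb{R}}$ defined by $G(x):=g(x)/\|x\|_{2}$ for $x\ne 0_{n}$ and $G(0_{n}):=+\infty$, and check the identity $Q_{\lambda}=G+\iota_{D}$: for $x\ne 0_{n}$ the two sides agree (the indicator $\iota_{D}$ accounting for the restriction $x\in D$), and at $x=0_{n}$ both sides equal $+\infty$ since $0_{n}\in D$. It then suffices to show that $G$ and $\iota_{D}$ are each lower semicontinuous, since their sum is lower semicontinuous (both are nonnegative, so no indeterminate form $\infty-\infty$ arises). As $D=\{x\in\mathbb{R}^{n}:\|x\|_{2}\le d\}$ is closed, $\iota_{D}$ is lower semicontinuous.

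It remains to verify that $G$ is lower semicontinuous. At any $x\ne 0_{n}$ this is clear: near such $x$, $G$ coincides with the quotient of the continuous function $g$ and the continuous, strictly positive function $\|\cdot\|_{2}$, hence $G$ is continuous, in particular lower semicontinuous, there. At $x=0_{n}$ I would exploit $g(0_{n})>0$ together with the continuity of $g$: on a small enough neighborhood of $0_{n}$ we have $g\ge\frac12 g(0_{n})$, so for $y\ne 0_{n}$ in that neighborhood, $G(y)=g(y)/\|y\|_{2}\ge\frac12 g(0_{n})/\|y\|_{2}\to+\infty$ as $y\to 0_{n}$; hence $\liminf_{y\to 0_{n}}G(y)=+\infty=G(0_{n})$. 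This gives lower semicontinuity of $G$, and therefore of $Q_{\lambda}=G+\iota_{D}$.

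The only delicate point is the behaviour at the excluded origin: one must rule out a sequence $x^{k}\to 0_{n}$ along which $Q_{\lambda}(x^{k})$ stays bounded, and this is exactly where the hypothesis $\epsilon<\|b\|_{2}$ (equivalently $g(0_{n})>0$) is essential — without it the numerator could vanish at the same rate as the denominator and lower semicontinuity at $0_{n}$ would fail. Everything else is routine: continuity of a ratio away from a zero of its denominator, closedness of the ball $D$, and stability of lower semicontinuity under addition.
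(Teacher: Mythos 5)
Your proof is correct and rests on the same essential ingredients as the paper's: continuity of the numerator and denominator away from the origin, closedness of the ball $D$, and the fact that $\lambda\|y\|_{1}+{\rm env}_{\iota_{C}}^{1}(Ay)\to\frac12(\|b\|_{2}-\epsilon)_{+}^{2}>0$ as $y\to 0_{n}$ (using $\|b\|_{2}>\epsilon$), which forces the quotient to blow up at the excluded origin. Writing $Q_{\lambda}=G+\iota_{D}$ and checking lower semicontinuity of each summand is only a mild reorganization of the paper's direct case-by-case verification, so the two arguments are essentially the same.
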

\begin{proof}
Let $x\in \mathbb{R}^{n}$. If $x\in {\rm dom}(Q_{\lambda})$, one has $\mathop{\rm lim\ inf}\limits_{y\rightarrow x} Q_{\lambda}(y) = Q_{\lambda}(x)$ due to the continuity of $\|\cdot\|_{1}$, ${\rm env}_{\iota_{C}}^{1}$ and $\|\cdot\|_{2}$. If $x\notin {\rm dom}(Q_{\lambda})$, then $x\notin D$ or $x=0_{n}$. For the case $x\notin D$, $\mathop{\rm lim\ inf}\limits_{y\rightarrow x} Q_{\lambda}(y) = +\infty = Q_{\lambda}(x)$ because $D$ is a closed set. When $x=0_{n}$, it is obvious that $\mathop{\rm lim}\limits_{y\rightarrow x}\lambda\|y\|_{1} + {\rm env}_{\iota_{C}}^{1}(Ay) = \frac{1}{2}(\|b\|_{2}-\epsilon)_{+}^{2}>0$ from the assumption that $\|b\|_{2}>\epsilon$. Thus, $\mathop{\rm lim\ inf}\limits_{y\rightarrow x} Q_{\lambda}(y) = +\infty = Q_{\lambda}(x)$. We complete the proof immediately. 
\end{proof}

With the help of Lemma \ref{lower_semicontinuous}, we show the existence of optimal solutions to problems (\ref{l1/l2_minimization_problem_constraint}) and (\ref{l1_l2_minimization_problem_constraint_envelop}) in the next proposition.
\begin{proposition}
The optimal solution set of (\ref{l1_l2_minimization_problem_constraint_envelop}) is nonempty. Moreover, if the constraint set of problem (\ref{l1/l2_minimization_problem_constraint}) is nonempty, then the optimal solution set of problem (\ref{l1/l2_minimization_problem_constraint}) is nonempty.
\end{proposition}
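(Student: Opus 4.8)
The plan is to prove the two assertions separately, in both cases invoking the classical Weierstrass-type argument: a proper lower semicontinuous function attains its minimum over a nonempty compact set, or more generally over a nonempty closed set on which it is coercive. For the first claim, concerning problem \eqref{l1_l2_minimization_problem_constraint_envelop}, I would work with the extended-valued function $Q_\lambda$, whose minimization over $\mathbb{R}^n$ is equivalent to problem \eqref{l1_l2_minimization_problem_constraint_envelop}. By Lemma \ref{lower_semicontinuous}, $Q_\lambda$ is lower semicontinuous, and it is proper since, for example, any nonzero $x$ with $\|x\|_2\le d$ lies in $\mathrm{dom}(Q_\lambda)$. The key point is that $\mathrm{dom}(Q_\lambda)\subseteq D$, so any minimizing sequence is bounded; passing to a convergent subsequence with limit $\bar x\in D$, lower semicontinuity gives $Q_\lambda(\bar x)\le\liminf Q_\lambda(x^k)=\inf Q_\lambda$. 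It remains only to rule out $\bar x=0_n$: but the proof of Lemma \ref{lower_semicontinuous} already shows $\liminf_{y\to 0_n}Q_\lambda(y)=+\infty$ under the standing assumption $\|b\|_2>\epsilon$, so $\bar x\neq 0_n$ and hence $\bar x$ is a genuine feasible point of \eqref{l1_l2_minimization_problem_constraint_envelop}, attaining the optimal value. Since $\inf Q_\lambda<+\infty$, the minimizer is finite and the optimal solution set is nonempty.

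For the second claim, about problem \eqref{l1/l2_minimization_problem_constraint}, I would let $F$ denote its constraint set, namely $F=\{x\in\mathbb{R}^n:\|Ax-b\|_2\le\epsilon,\ \|x\|_2\le d\}$, and assume $F\neq\emptyset$. The objective $\|x\|_1/\|x\|_2$ is continuous on $\mathbb{R}^n\setminus\{0_n\}$. The set $F$ is closed (intersection of two closed sets) and bounded (by $\|x\|_2\le d$), hence compact. The only subtlety is that $0_n$ may or may not lie in $F$, and the objective is not defined there. If $0_n\notin F$, then $F$ is a compact subset of $\mathbb{R}^n\setminus\{0_n\}$ on which the objective is continuous, so it attains its minimum by the Weierstrass theorem. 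If $0_n\in F$, I would argue that one can simply restrict attention to $F\setminus\{0_n\}$: since $F\neq\emptyset$ and contains $0_n$, under the standing assumption $\|b\|_2>\epsilon$ we have $0_n$ infeasible for $\|Ax-b\|_2\le\epsilon$ — wait, this needs care, since $\|A\cdot 0_n-b\|_2=\|b\|_2>\epsilon$ means $0_n\notin F$ automatically. So in fact $0_n$ is never in $F$ under the blanket hypothesis $\|b\|_2>\epsilon$, and the compactness argument applies directly.

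More carefully, the cleanest route for the second claim is: under the standing assumption $\|b\|_2>\epsilon$ (in force throughout the section, as used in Lemma \ref{lower_semicontinuous}), the origin violates $\|Ax-b\|_2\le\epsilon$, so $0_n\notin F$; thus $F$ is a nonempty compact set not containing the origin, and $x\mapsto\|x\|_1/\|x\|_2$ is continuous on the open set $\mathbb{R}^n\setminus\{0_n\}\supseteq F$. The Weierstrass extreme value theorem then yields a minimizer in $F$, so the optimal solution set of \eqref{l1/l2_minimization_problem_constraint} is nonempty.

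I expect the main obstacle — really the only place requiring thought rather than bookkeeping — to be the behavior near the origin: in the first part, confirming that the minimizing sequence cannot converge to $0_n$ and that the infimum is finite (so that a minimizer over the closed ball $D$ is an honest feasible point rather than a spurious limit at which $Q_\lambda=+\infty$); in the second part, checking that $0_n$ is excluded from the constraint set so that continuity of the fractional objective on the compact set $F$ is not in question. Both are resolved by invoking $\|b\|_2>\epsilon$, exactly as in the proof of Lemma \ref{lower_semicontinuous}. The remaining steps — closedness and boundedness of $D$ and $F$, lower semicontinuity from the cited lemma, and the standard subsequence extraction — are routine.
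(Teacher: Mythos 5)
Your proposal is correct and follows essentially the same route as the paper: the penalty problem is handled via lower semicontinuity of $Q_\lambda$ (Lemma \ref{lower_semicontinuous}) together with the boundedness of its domain $D\setminus\{0_n\}$ (level boundedness), and the constrained problem via Weierstrass on the compact constraint set. Your explicit check that the standing assumption $\|b\|_2>\epsilon$ excludes $0_n$ from the constraint set of (\ref{l1/l2_minimization_problem_constraint}) is a detail the paper leaves implicit, but it does not change the argument.
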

\begin{proof}
It is obvious that problem (\ref{l1/l2_minimization_problem_constraint}) has optimal solutions when the constraint set is nonempty since it minimizes a continuous function over a bounded closed set. In order to show the existence of optimal solutions to problem (\ref{l1_l2_minimization_problem_constraint_envelop}), we first rewrite it to an equivalent formulation
$
{\rm min}\left\{Q_{\lambda}(x):x\in\mathbb{R}^{n}\right\}.
$
Clearly, $Q_{\lambda}$ is level bounded since ${\rm dom}(Q_{\lambda}) = D\backslash \{0_{n}\}$. Lemma \ref{lower_semicontinuous} ensures that $Q_{\lambda}$ is lower semicontinuous. Therefore, $Q_{\lambda}$ has minimizers. We then get the desired results.
\end{proof}

We next exhibit in the following theorem that optimal solutions to problem (\ref{l1_l2_minimization_problem_constraint_envelop}) tends to optimal solutions to problem (\ref{l1/l2_minimization_problem_constraint}) as $\lambda$ goes to zero. Since its proof is very similar with that of Theorem 17.1 in \cite{jorge2006numerical}, we omit it here.
\begin{theorem}
Let $\{\lambda_{k}>0:k\in\mathbb{N}\}$ be a decreasing sequence satisfying $\mathop{\rm lim}\limits_{k\rightarrow \infty}\lambda_{k}$$ = 0$. Suppose $x^{k}\in\mathbb{R}^{n}$ is an optimal solution of problem (\ref{l1_l2_minimization_problem_constraint_envelop}) with $\lambda = \lambda_{k}$ for $k\in\mathbb{N}$. There hold: \\
(i)\,$\{x^{k}:k\in\mathbb{N}\}$ is bounded;\\
(ii)\, Any accumulation point $x^{\star}$ of $\{x^{k}:k\in\mathbb{N}\}$ is a feasible point of problem (\ref{l1/l2_minimization_problem_constraint}), i.e., $x^{\star}\in D$ and $Ax^{\star}\in C$;\\
(iii)\,Any accumulation point $x^{\star}$ of $\{x^{k}:k\in\mathbb{N}\}$ is an optimal solution to problem (\ref{l1/l2_minimization_problem_constraint}).
\end{theorem}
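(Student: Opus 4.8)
The plan is to recognize (\ref{l1_l2_minimization_problem_constraint_envelop}) as a quadratic-penalty reformulation of (\ref{l1/l2_minimization_problem_constraint}) and run the classical penalty-convergence argument, adapted to the fractional objective. The key starting observation is the splitting, valid for $x\in D\setminus\{0_{n}\}$,
\[
Q_{\lambda}(x)=\lambda\,\frac{\|x\|_{1}}{\|x\|_{2}}+P(x),\qquad P(x):=\frac{{\rm env}_{\iota_{C}}^{1}(Ax)}{\|x\|_{2}}=\frac{(\|Ax-b\|_{2}-\epsilon)_{+}^{2}}{2\|x\|_{2}}\ge 0,
\]
where $P(x)=0$ precisely when $Ax\in C$; since the effective penalty weight is $1/\lambda_{k}\to\infty$, this is exactly the setting of Theorem 17.1 in \cite{jorge2006numerical}. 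I assume throughout (as is implicit in the statement, since otherwise (ii)--(iii) are vacuous) that the constraint set of (\ref{l1/l2_minimization_problem_constraint}) is nonempty, and fix a feasible point $\bar{x}$; because $\|b\|_{2}>\epsilon$ the origin is infeasible for $\|Ax-b\|_{2}\le\epsilon$, so $\bar{x}\in D\setminus\{0_{n}\}$ and $P(\bar{x})=0$.

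Part (i) is immediate: by definition of $Q_{\lambda}$ one has ${\rm dom}(Q_{\lambda_{k}})=D\setminus\{0_{n}\}$, so each optimal $x^{k}$ lies in $D$ and $\|x^{k}\|_{2}\le d$, whence $\{x^{k}\}$ is bounded. For part (ii), take any subsequence $x^{k_{j}}\to x^{\star}$. Comparing $x^{k_{j}}$ with $\bar{x}$ and using optimality of $x^{k_{j}}$,
\[
\lambda_{k_{j}}\frac{\|x^{k_{j}}\|_{1}}{\|x^{k_{j}}\|_{2}}+P(x^{k_{j}})=Q_{\lambda_{k_{j}}}(x^{k_{j}})\le Q_{\lambda_{k_{j}}}(\bar{x})=\lambda_{k_{j}}\frac{\|\bar{x}\|_{1}}{\|\bar{x}\|_{2}},
\]
and dropping the nonnegative first term on the left gives $0\le P(x^{k_{j}})\le \lambda_{k_{j}}\|\bar{x}\|_{1}/\|\bar{x}\|_{2}\to 0$. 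I would then first rule out $x^{\star}=0_{n}$: if $\|x^{k_{j}}\|_{2}\to 0$, then since $\|Ax^{k_{j}}-b\|_{2}\to\|b\|_{2}>\epsilon$ the numerator $(\|Ax^{k_{j}}-b\|_{2}-\epsilon)_{+}^{2}$ stays bounded away from zero while the denominator $2\|x^{k_{j}}\|_{2}$ tends to zero, forcing $P(x^{k_{j}})\to+\infty$, a contradiction. Hence $x^{\star}\neq 0_{n}$, and $x^{\star}\in D$ since $D$ is closed. Finally, from $(\|Ax^{k_{j}}-b\|_{2}-\epsilon)_{+}^{2}=2\|x^{k_{j}}\|_{2}P(x^{k_{j}})\le 2d\,P(x^{k_{j}})\to 0$ and continuity, $(\|Ax^{\star}-b\|_{2}-\epsilon)_{+}=0$, i.e. $Ax^{\star}\in C$; this proves (ii).

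For part (iii), $x^{\star}$ is feasible for (\ref{l1/l2_minimization_problem_constraint}) by (ii) with $x^{\star}\neq 0_{n}$. Let $x$ be an arbitrary feasible point of (\ref{l1/l2_minimization_problem_constraint}); then $x\in D\setminus\{0_{n}\}$ and $P(x)=0$, so optimality of $x^{k_{j}}$ and $P(x^{k_{j}})\ge 0$ yield
\[
\lambda_{k_{j}}\frac{\|x^{k_{j}}\|_{1}}{\|x^{k_{j}}\|_{2}}\le Q_{\lambda_{k_{j}}}(x^{k_{j}})\le Q_{\lambda_{k_{j}}}(x)=\lambda_{k_{j}}\frac{\|x\|_{1}}{\|x\|_{2}}.
\]
Dividing by $\lambda_{k_{j}}>0$ and letting $j\to\infty$, continuity of $\|\cdot\|_{1}/\|\cdot\|_{2}$ at $x^{\star}\neq 0_{n}$ gives $\|x^{\star}\|_{1}/\|x^{\star}\|_{2}\le\|x\|_{1}/\|x\|_{2}$; since $x$ was arbitrary feasible and $x^{\star}$ is itself feasible, $x^{\star}$ solves (\ref{l1/l2_minimization_problem_constraint}).

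The one step that is not routine penalty bookkeeping is the exclusion of $x^{\star}=0_{n}$: the objective is a ratio that is undefined at the origin, where $Q_{\lambda}$ has a pole, so one must check that the sequence cannot collapse to $0_{n}$. This is exactly where the standing assumption $\|b\|_{2}>\epsilon$ (the same one used in Lemma \ref{lower_semicontinuous}) does the work; once $x^{\star}\neq 0_{n}$ is secured, continuity of the ratio and of the penalty delivers feasibility and optimality as above.
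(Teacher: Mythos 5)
Your proof is correct and follows exactly the route the paper itself indicates: the paper omits this argument, citing its similarity to the classical quadratic-penalty convergence theorem (Theorem 17.1 of \cite{jorge2006numerical}), and your comparison-with-a-fixed-feasible-point bookkeeping, giving $P(x^{k_j})\to 0$ and then optimality in the limit, is precisely that argument adapted to $Q_{\lambda}$. The only non-classical ingredients --- making explicit the nonemptiness of the feasible set of problem (\ref{l1/l2_minimization_problem_constraint}) and ruling out $x^{\star}=0_{n}$ via the standing assumption $\|b\|_{2}>\epsilon$ --- are handled correctly.
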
  

Recall that $x^{\star}$ is a (Fr{\'e}chet) stationary point of $F$ if and only if $0\in\hat{\partial} F(x^{\star})$. In the remaining part of this section, we dedicate to exploring the relationship between stationary points of problems (\ref{l1/l2_minimization_problem_constraint}) and (\ref{l1_l2_minimization_problem_constraint_envelop}). To this end, we first give a characterization of stationary points of problems (\ref{l1/l2_minimization_problem_constraint}) and (\ref{l1_l2_minimization_problem_constraint_envelop}) in the next lemma.
\begin{lemma}\label{charac_constraint_envelop}
The following statements hold if ${\rm int} D\cap {\rm int}\{x:Ax\in C\}\neq \emptyset$:\\
(i)\,The vector $x^{\star}\in \mathbb{R}^{n}$ is a stationary point of (\ref{l1/l2_minimization_problem_constraint}) if and only if $x^{\star}\in D$, $Ax^{\star}\in C$ and 
$
0\in \partial (\|\cdot\|_{1}+\iota_{D})(x^{\star}) - \frac{\|x^{\star}\|_{1}}{\|x^{\star}\|_{2}}\nabla\|x^{\star}\|_{2} + \partial (\iota_{C} \circ A )(x^{\star});
$

\noindent(ii)\,The vector $x^{\star}\in\mathbb{R}^{n}$ is a stationary point of (\ref{l1_l2_minimization_problem_constraint_envelop}) if and only if $0_{n}\neq x^{\star}\in D$ and
$
0\in \lambda\partial (\|\cdot\|_{1} + \iota_{D})(x^{\star})  - Q_{\lambda}(x^{\star})\nabla\|x^{\star}\|_{2} + \nabla ({\rm env}_{\iota_{C}}^{1}\circ A)(x^{\star}).
$
\end{lemma}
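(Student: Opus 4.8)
The plan is to rewrite each of the two constrained problems as the \emph{unconstrained} minimization of a quotient of two extended-valued functions in the sense of (\ref{subdiff_quo}), and then to combine Proposition \ref{P2.1} with elementary convex subdifferential calculus and the Moreau--Rockafellar sum rule.

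For part (i), set $f_{1}:=\|\cdot\|_{1}+\iota_{D}+\iota_{C}\circ A$ and $f_{2}:=\|\cdot\|_{2}$, so that problem (\ref{l1/l2_minimization_problem_constraint}) is exactly $\min\{f_{1}(x)/f_{2}(x):x\in\mathbb{R}^{n}\}$ in the sense of (\ref{subdiff_quo}); note that every point of ${\rm dom}(f_{1})=D\cap\{x:Ax\in C\}$ is nonzero, since $\|b\|_{2}>\epsilon$ forces $Ax\neq 0_{m}$, hence $x\neq 0_{n}$. First I would record that $f_{1}$ is proper, convex and continuous relative to ${\rm dom}(f_{1})$ (on which it coincides with $\|\cdot\|_{1}$), while $f_{2}$ is convex, globally Lipschitz, and differentiable off the origin. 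Since $\hat{\partial}\rho(x^{\star})=\emptyset$ whenever $x^{\star}\notin{\rm dom}(\rho)$, a stationary point $x^{\star}$ must satisfy $x^{\star}\in D$, $Ax^{\star}\in C$ and $x^{\star}\neq 0_{n}$, and then the second formula in Proposition \ref{P2.1} (applicable because $f_{2}$ is differentiable at $x^{\star}$) gives
$$
\hat{\partial}\rho(x^{\star})=\frac{\|x^{\star}\|_{2}\,\hat{\partial}f_{1}(x^{\star})-\|x^{\star}\|_{1}\,\nabla\|x^{\star}\|_{2}}{\|x^{\star}\|_{2}^{2}},
$$
so $0\in\hat{\partial}\rho(x^{\star})$ is equivalent to $0\in\hat{\partial}f_{1}(x^{\star})-\frac{\|x^{\star}\|_{1}}{\|x^{\star}\|_{2}}\nabla\|x^{\star}\|_{2}$. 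It then remains to split $\hat{\partial}f_{1}(x^{\star})$: since $\|\cdot\|_{1}+\iota_{D}$ and $\iota_{C}\circ A$ are convex, their Fr{\'e}chet subdifferentials agree with the convex subdifferentials, and the standing hypothesis ${\rm int}\,D\cap{\rm int}\{x:Ax\in C\}\neq\emptyset$ is precisely the Moreau--Rockafellar qualification yielding $\partial f_{1}(x^{\star})=\partial(\|\cdot\|_{1}+\iota_{D})(x^{\star})+\partial(\iota_{C}\circ A)(x^{\star})$. Substituting gives the asserted characterization; the converse implication only needs the always-valid inclusion $\partial(\|\cdot\|_{1}+\iota_{D})(x^{\star})+\partial(\iota_{C}\circ A)(x^{\star})\subseteq\hat{\partial}f_{1}(x^{\star})$ together with the fact that $x^{\star}\in D$, $Ax^{\star}\in C$ already force $x^{\star}\in{\rm dom}(\rho)$.

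For part (ii) I would run the same scheme with $g_{1}:=\lambda\|\cdot\|_{1}+{\rm env}_{\iota_{C}}^{1}\circ A+\iota_{D}\ge 0$ and $g_{2}:=\|\cdot\|_{2}$, so that (\ref{l1_l2_minimization_problem_constraint_envelop}) is $\min\{Q_{\lambda}(x):x\in\mathbb{R}^{n}\}=\min\{g_{1}(x)/g_{2}(x):x\in\mathbb{R}^{n}\}$; here ${\rm dom}(g_{1})=D$ and $g_{1}$ is proper, convex and continuous relative to $D$, so stationarity again forces $0_{n}\neq x^{\star}\in D$. Proposition \ref{P2.1} gives $\hat{\partial}Q_{\lambda}(x^{\star})=\bigl(\|x^{\star}\|_{2}\,\hat{\partial}g_{1}(x^{\star})-g_{1}(x^{\star})\nabla\|x^{\star}\|_{2}\bigr)/\|x^{\star}\|_{2}^{2}$, and since $g_{1}(x^{\star})/\|x^{\star}\|_{2}=Q_{\lambda}(x^{\star})$, stationarity is equivalent to $0\in\hat{\partial}g_{1}(x^{\star})-Q_{\lambda}(x^{\star})\nabla\|x^{\star}\|_{2}$. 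Because ${\rm env}_{\iota_{C}}^{1}\circ A$ is (Lipschitz) continuously differentiable, as recalled before (\ref{l1_l2_minimization_problem_constraint_envelop}), the differentiable-summand rule yields $\hat{\partial}g_{1}(x^{\star})=\hat{\partial}(\lambda\|\cdot\|_{1}+\iota_{D})(x^{\star})+\nabla({\rm env}_{\iota_{C}}^{1}\circ A)(x^{\star})$, and $\hat{\partial}(\lambda\|\cdot\|_{1}+\iota_{D})(x^{\star})=\lambda\partial(\|\cdot\|_{1}+\iota_{D})(x^{\star})$ since $\partial\iota_{D}$ is a cone and $\lambda>0$. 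Assembling these identities gives the stated inclusion.

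The convexity checks and the verification of the continuity/Lipschitz/differentiability hypotheses of Proposition \ref{P2.1} are routine. I expect the only delicate point to be the additivity of the subdifferential for $f_{1}$ in part (i): the qualification ${\rm int}\,D\cap{\rm int}\{x:Ax\in C\}\neq\emptyset$ is exactly what upgrades the trivial inclusion to the equality $\partial f_{1}=\partial(\|\cdot\|_{1}+\iota_{D})+\partial(\iota_{C}\circ A)$ and hence secures the nontrivial ("only if") implication in (i); part (ii), by contrast, needs no such qualification, because the envelope term is already smooth and $\|\cdot\|_{1}$ is finite-valued.
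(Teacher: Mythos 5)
Your proposal is correct, and it reaches the characterizations by a somewhat different decomposition than the paper's proof. You fold every nonsmooth constraint indicator into the numerator — $f_{1}=\|\cdot\|_{1}+\iota_{D}+\iota_{C}\circ A$ in (i), $g_{1}=\lambda\|\cdot\|_{1}+{\rm env}_{\iota_{C}}^{1}\circ A+\iota_{D}$ in (ii) — and apply Proposition \ref{P2.1} once, exploiting that it allows an extended-valued numerator, then split $\hat{\partial}f_{1}(x^{\star})$ by the Moreau--Rockafellar sum rule under the interiority qualification (resp.\ split $\hat{\partial}g_{1}(x^{\star})$ by the smooth-summand rule and $\lambda\iota_{D}=\iota_{D}$). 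The paper instead keeps the indicators outside the ratio: it applies a regularity-based quotient rule (Corollary 1.111(i) of Mordukhovich) to the finite-valued ratios $\|\cdot\|_{1}/\|\cdot\|_{2}$ and $(\lambda\|\cdot\|_{1}+{\rm env}_{\iota_{C}}^{1}\circ A)/\|\cdot\|_{2}$, adds $\partial\iota_{D}$ and $\partial(\iota_{C}\circ A)$ through a sum rule for limiting subdifferentials (using local Lipschitz continuity of the ratio and regularity of the summands), absorbs the factor $\|x\|_{2}$ via the cone identities $\|x\|_{2}\,\partial\iota_{D}(x)=\partial\iota_{D}(x)$ and $\|x\|_{2}\,\partial(\iota_{C}\circ A)(x)=\partial(\iota_{C}\circ A)(x)$, and recombines $\partial\|\cdot\|_{1}+\partial\iota_{D}=\partial(\|\cdot\|_{1}+\iota_{D})$ by Corollary 16.38 of Bauschke--Combettes, which is exactly where the qualification ${\rm int}\,D\cap{\rm int}\{x:Ax\in C\}\neq\emptyset$ enters. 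Your route buys a shorter argument: it avoids the limiting-subdifferential sum rule, the regularity discussion, and the scaling identity, at the small cost of checking the relative-continuity hypothesis of Proposition \ref{P2.1} for the extended-valued numerators (immediate, since $f_{1}$ and $g_{1}$ coincide with continuous functions on their closed domains) and of noting that your quotient $\rho$ agrees with the essential objective of (\ref{l1/l2_minimization_problem_constraint}) near any feasible $x^{\star}\neq 0_{n}$, so its Fr{\'e}chet subdifferential is indeed the one defining stationarity. Your remark that the qualification is needed only for part (i) is consistent with the paper, whose proof of (ii) likewise uses only $\lambda\iota_{D}=\iota_{D}$ and the differentiable-summand rule.
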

\begin{proof}
Noting that ${\rm env}_{\iota_{C}}^{1}\circ A$, $\iota_{C}\circ A$ and $\iota_{D}$ are all convex functions, all of them are regular functions. Due to the continuously differentiable property of $\|\cdot\|_{2}$ and ${\rm env}_{\iota_{C}}^{1}\circ A$ on $\mathbb{R}^{n}\backslash \{0_{n}\}$, we obtain from Corollary 1.111(i) in \cite{Boris1998Variational} that at any $x\in\mathbb{R}^{n}\backslash \{0_{n}\}$, $\frac{\|\cdot\|_{1}}{\|\cdot\|_{2}}$ and $\frac{\lambda\|\cdot\|_{1}+{\rm env}_{\iota_{C}}^{1}\circ A}{\|\cdot\|_{2}}$ are regular and there hold:
\begin{equation}\label{charac_constraint_envelop_a}
\hat{\partial}\frac{\|x\|_{1}}{\|x\|_{2}} = \partial\frac{\|x\|_{1}}{\|x\|_{2}} = \frac{\partial\|x\|_{1} - \frac{\|x\|_{1}}{\|x\|_{2}}\nabla\|x\|_{2}}{\|x\|_{2}},
\end{equation}
\begin{equation}\label{charac_constraint_envelop_b}
\begin{aligned}
\hat{\partial} \frac{\lambda\|x\|_{1}+{\rm env}_{\iota_{C}}^{1}(Ax)}{\|x\|_{2}}  &= \partial \frac{\lambda\|x\|_{1}+{\rm env}_{\iota_{C}}^{1}(Ax)}{\|x\|_{2}} \\
&= \frac{\lambda\partial\|x\|_{1}+\nabla ({\rm env}_{\iota_{C}}^{1}\circ A)x-\frac{\lambda\|x\|_{1}+{\rm env}_{\iota_{C}}^{1}(Ax)}{\|x\|_{2}}\nabla\|x\|_{2}}{\|x\|_{2}}.
\end{aligned}
\end{equation}

We first prove Item (i). The vector $x^{\star}\neq 0_{n}$ is a stationary point of problem (\ref{l1/l2_minimization_problem_constraint}) if and only if 
\begin{equation}\label{charac_constraint_envelop_c}
\begin{aligned}
0\in &\partial \frac{\|x^{\star}\|_{1}}{\|x^{\star}\|_{2}}+\partial(\iota_{D}+\iota_{C}\circ A)(x^{\star}),\\
= &\partial \frac{\|x^{\star}\|_{1}}{\|x^{\star}\|_{2}}+\partial \iota_{D}(x^{\star})+\partial (\iota_{C}\circ A)(x^{\star}),
\end{aligned}
\end{equation}
where the first inclusion follows from the locally Lipschitz continuity of $\frac{\|\cdot\|_{1}}{\|\cdot\|_{2}}$ at $x^{\star}$ as well as the regularity of $\frac{\|\cdot\|_{1}}{\|\cdot\|_{2}}$ and $\iota_{D}+\iota_{C}\circ A$ in $\mathbb{R}^{n}\backslash\{0_{n}\}$, the second equality holds thanks to Corollary 16.38 in \cite{Heinz2011Convex} and ${\rm int}D\cap {\rm int}\{x:Ax\in C\}\neq \emptyset$.
For $x\in\mathbb{R}^{n}\backslash\{0_{n}\}$, it follows from $\|x\|_{2}>0$ as well as $a\iota_{S}=\iota_{S}$ for any $a>0$ and $S\subseteq \mathbb{R}^{n}$ that
\begin{equation}\label{charac_constraint_envelop_e}
\|x\|_{2}\partial\iota_{D}(x) = \partial\iota_{D}(x) \ {\rm and}\ \|x\|_{2}\,\partial (\iota_{C}\circ A )(x) = \partial (\iota_{C}\circ A )(x).
\end{equation}
Therefore, we deduce from (\ref{charac_constraint_envelop_a}) and (\ref{charac_constraint_envelop_e}) that (\ref{charac_constraint_envelop_c}) is equivalent to 
$$
\begin{aligned}
0&\in \partial\|x^{\star}\|_{1} - \frac{\|x^{\star}\|_{1}}{\|x^{\star}\|_{2}}\nabla\|x^{\star}\|_{2} + \partial \iota_{D}(x^{\star}) + \partial (\iota_{C}\circ A) (x^{\star}),\\
&=\partial(\|\cdot\|_{1}+\iota_{D})(x^{\star}) - \frac{\|x^{\star}\|_{1}}{\|x^{\star}\|_{2}}\nabla\|x^{\star}\|_{2} + \partial (\iota_{C}\circ A) (x^{\star}),
\end{aligned}
$$
where the last equality follows from Corollary 16.38 in \cite{Heinz2011Convex}. This proves Item (i).

We next prove Item (ii) by a similar method. The vector $x^{\star}\in D\backslash\{0_{n}\}$ is a stationary point of problem (\ref{l1_l2_minimization_problem_constraint_envelop}) if and only if
\begin{equation}\label{charac_constraint_envelop_f}
0\in\partial\left(\frac{\lambda\|\cdot\|_{1}+{\rm env}_{\iota_{C}}^{1}\circ A}{\|\cdot\|_{2}}\right)(x^{\star})+\partial_{\iota_{D}}(x^{\star})
\end{equation}
from the regularity of $\frac{\lambda\|\cdot\|_{1}+{\rm env}_{\iota_{C}}^{1}\circ A}{\|\cdot\|_{2}}$ and $\ell_{D}$ as well as the locally Lipschitz continuity of $\frac{\lambda\|\cdot\|_{1}+{\rm env}_{\iota_{C}}^{1}\circ A}{\|\cdot\|_{2}}$. With the help of (\ref{charac_constraint_envelop_b}) and (\ref{charac_constraint_envelop_e}), (\ref{charac_constraint_envelop_f}) holds if and only if 
$$
\begin{aligned}
0\in & \lambda\partial \|x^{\star}\|_{1} + \nabla ({\rm env}_{\iota_{C}}^{1}\circ A)(x^{\star}) +\partial_{\iota_{D}}(x^{\star}) - \frac{\lambda\|x^{\star}\|_{1}+{\rm env}_{\iota_{C}}^{1}(Ax^{\star})}{\|x^{\star}\|_{2}}\nabla\|x^{\star}\|_{2}\\
= &\lambda\partial (\|\cdot\|_{1} + \iota_{D})(x^{\star}) + \nabla ({\rm env}_{\iota_{C}}^{1}\circ A)(x^{\star}) - Q_{\lambda}(x^{\star})\nabla\|x^{\star}\|_{2},
\end{aligned}
$$
where the last equality follows from $\lambda\iota_{D} = \iota_{D}$. We get Item (ii) immediately.
\end{proof}

We are now ready to reveal the relationship between stationary points of problems (\ref{l1/l2_minimization_problem_constraint}) and (\ref{l1_l2_minimization_problem_constraint_envelop}). Recall first that when $d>0$ and ${\rm int}\{x\in\mathbb{R}^{n}:Ax\in C\}\neq\emptyset$, there hold \cite{Chen2016Penalty}
\begin{equation}\label{recall_partial}
\begin{aligned}
\partial\iota_{D}(x) &= \left\{
\begin{aligned}
&\{0_{n}\}, \qquad  \qquad \quad  {\rm if}\ \|x\|_{2}<d, \\
&\{tx:t\ge 0\},  \qquad \, {\rm if}\ \|x\|_{2}=d,
\end{aligned}
\right.\\
\partial(\iota_{C}\circ A )(x) &= \left\{
\begin{aligned}
&\{0_{n}\}, \qquad  \qquad \qquad \qquad \quad \  {\rm if}\ \|Ax - b\|_{2}<\epsilon, \\
&\{tA^{T}(Ax-b):t\ge 0\}, \qquad  {\rm if}\ \|Ax - b\|_{2}=\epsilon.
\end{aligned}
\right.
\end{aligned}
\end{equation}
\begin{theorem}\label{relationship_stationary} 
Let $\{\lambda_{k}>0:k\in\mathbb{N}\}$ be a decreasing sequence satisfying $\mathop{\rm lim}\limits_{k\rightarrow \infty}\lambda_{k}$$ = 0$. Suppose $x^{k}\in\mathbb{R}^{n}$ is any stationary point of problem (\ref{l1_l2_minimization_problem_constraint_envelop}) with $\lambda = \lambda_{k}$ for $k\in\mathbb{N}$. Then, $\{x^{k}:k\in\mathbb{N}\}$ is bounded. Furthermore, if ${\rm int} D\cap {\rm int}\{x:Ax\in C\}\neq \emptyset$ and there exists a $\hat{x}\in\{x\in D:Ax\in C\}$ such that $\frac{Q_{\lambda_{k}}(x^{k})}{\lambda_{k}}\le \frac{\|\hat{x}\|_{1}}{\|\hat{x}\|_{2}}$ for all $k\in\mathbb{N}$, then there hold for any accumulation point $x^{\star}$ of $\{x^{k}:k\in\mathbb{N}\}$ that:\\
(i)\,$x^{\star}$ is a feasible point of problem (\ref{l1/l2_minimization_problem_constraint}), i.e., $x^{\star}\in D$ and $Ax^{\star}\in C$;\\
(ii)\,$x^{\star}$ is a stationary point of problem (\ref{l1/l2_minimization_problem_constraint}).
\end{theorem}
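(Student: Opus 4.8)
The plan is to extract both assertions from the stationarity characterization in Lemma~\ref{charac_constraint_envelop}(ii), the engine being a single quantitative estimate obtained by testing that condition against a Slater point. First I note that boundedness of $\{x^{k}\}$ is free and needs none of the extra hypotheses: a stationary point of~(\ref{l1_l2_minimization_problem_constraint_envelop}) lies in ${\rm dom}(Q_{\lambda_{k}})=D\backslash\{0_{n}\}$, so $\|x^{k}\|_{2}\le d$.

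Next I would assume ${\rm int}\,D\cap{\rm int}\{x:Ax\in C\}\neq\emptyset$, fix $\bar{x}$ with $\|\bar{x}\|_{2}<d$ and $\|A\bar{x}-b\|_{2}<\epsilon$, and set $\delta:=\epsilon-\|A\bar{x}-b\|_{2}>0$. By Lemma~\ref{charac_constraint_envelop}(ii), for each $k$ there is $w^{k}\in\partial(\|\cdot\|_{1}+\iota_{D})(x^{k})$ with
$$
\lambda_{k}w^{k}-Q_{\lambda_{k}}(x^{k})\nabla\|x^{k}\|_{2}+g^{k}=0,\qquad g^{k}:=\nabla({\rm env}_{\iota_{C}}^{1}\circ A)(x^{k})=A^{T}(Ax^{k}-\Pi_{C}(Ax^{k})).
$$
Testing this identity against $x^{k}-\bar{x}$, the subgradient inequality for the convex function $\|\cdot\|_{1}+\iota_{D}$ at $x^{k}$ (evaluated at $\bar{x}\in D$) gives $\langle w^{k},x^{k}-\bar{x}\rangle\ge\|x^{k}\|_{1}-\|\bar{x}\|_{1}\ge-\|\bar{x}\|_{1}$, so together with the standing bound $Q_{\lambda_{k}}(x^{k})\le\lambda_{k}\|\hat{x}\|_{1}/\|\hat{x}\|_{2}$ and boundedness of $\{x^{k}\}$ I obtain $\langle g^{k},x^{k}-\bar{x}\rangle\le C_{1}\lambda_{k}$ for a constant $C_{1}$. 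On the other hand, writing $Ax^{k}-A\bar{x}=(Ax^{k}-\Pi_{C}(Ax^{k}))+(\Pi_{C}(Ax^{k})-A\bar{x})$ and using the strict Slater gap $\|A\bar{x}-b\|_{2}<\epsilon$ with the Cauchy--Schwarz inequality should yield the projection estimate $\langle g^{k},x^{k}-\bar{x}\rangle\ge\delta\|Ax^{k}-\Pi_{C}(Ax^{k})\|_{2}$. The hard part is this last inequality; granting it, the two bounds combine into $\|Ax^{k}-\Pi_{C}(Ax^{k})\|_{2}=(\|Ax^{k}-b\|_{2}-\epsilon)_{+}\le(C_{1}/\delta)\lambda_{k}$. (The standing hypothesis is essential here, since $Q_{\lambda_{k}}(x^{k})$ itself could a priori blow up as $\|x^{k}\|_{2}\to0$.)

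Everything then follows by routine limiting arguments along a subsequence $x^{k_{j}}\to x^{\star}$. For (i): the estimate forces $\|Ax^{\star}-\Pi_{C}(Ax^{\star})\|_{2}=0$, i.e.\ $Ax^{\star}\in C$, and $x^{\star}\in D$ since $D$ is closed; as $\|b\|_{2}>\epsilon$ the origin is infeasible, so $x^{\star}\neq0_{n}$ and $\|x^{\star}\|_{2}>0$. For (ii): dividing the stationarity identity by $\lambda_{k}$ gives $w^{k}=\tfrac{Q_{\lambda_{k}}(x^{k})}{\lambda_{k}}\nabla\|x^{k}\|_{2}-\tfrac{g^{k}}{\lambda_{k}}$; the estimate makes $\{g^{k}/\lambda_{k}\}$ bounded and forces ${\rm env}_{\iota_{C}}^{1}(Ax^{k})/\lambda_{k}\to0$, so, using $\|x^{k_{j}}\|_{2}\to\|x^{\star}\|_{2}>0$,
$$
\frac{Q_{\lambda_{k_{j}}}(x^{k_{j}})}{\lambda_{k_{j}}}=\frac{\|x^{k_{j}}\|_{1}}{\|x^{k_{j}}\|_{2}}+\frac{{\rm env}_{\iota_{C}}^{1}(Ax^{k_{j}})}{\lambda_{k_{j}}\|x^{k_{j}}\|_{2}}\longrightarrow\frac{\|x^{\star}\|_{1}}{\|x^{\star}\|_{2}}.
$$
Passing to a further subsequence, $g^{k_{j}}/\lambda_{k_{j}}\to h^{\star}$, hence $w^{k_{j}}\to w^{\star}:=\tfrac{\|x^{\star}\|_{1}}{\|x^{\star}\|_{2}}\nabla\|x^{\star}\|_{2}-h^{\star}$, and $w^{\star}\in\partial(\|\cdot\|_{1}+\iota_{D})(x^{\star})$ by outer semicontinuity of the (convex) subdifferential. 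To identify $h^{\star}$: if $\|Ax^{\star}-b\|_{2}<\epsilon$ then $g^{k_{j}}=0_{n}$ eventually and $h^{\star}=0_{n}\in\partial(\iota_{C}\circ A)(x^{\star})$; if $\|Ax^{\star}-b\|_{2}=\epsilon$, then $g^{k}/\lambda_{k}=\sigma_{k}A^{T}(Ax^{k}-b)$ with $\sigma_{k}:=(1-\epsilon/\|Ax^{k}-b\|_{2})_{+}/\lambda_{k}$ bounded along $\{k_{j}\}$ by the estimate (since $\|Ax^{k_{j}}-b\|_{2}\to\epsilon>0$), so along a further subsequence $\sigma_{k_{j}}\to t^{\star}\ge0$ and $h^{\star}=t^{\star}A^{T}(Ax^{\star}-b)\in\partial(\iota_{C}\circ A)(x^{\star})$ by~(\ref{recall_partial}). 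Thus $w^{\star}+h^{\star}=\tfrac{\|x^{\star}\|_{1}}{\|x^{\star}\|_{2}}\nabla\|x^{\star}\|_{2}$ with $w^{\star}\in\partial(\|\cdot\|_{1}+\iota_{D})(x^{\star})$ and $h^{\star}\in\partial(\iota_{C}\circ A)(x^{\star})$, i.e.\ $0\in\partial(\|\cdot\|_{1}+\iota_{D})(x^{\star})-\tfrac{\|x^{\star}\|_{1}}{\|x^{\star}\|_{2}}\nabla\|x^{\star}\|_{2}+\partial(\iota_{C}\circ A)(x^{\star})$; combined with the feasibility from (i), Lemma~\ref{charac_constraint_envelop}(i) gives that $x^{\star}$ is a stationary point of~(\ref{l1/l2_minimization_problem_constraint}).
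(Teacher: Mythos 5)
Your proposal is correct, but it reaches the conclusion by a genuinely different route than the paper. The paper treats (i) cheaply, directly from the standing bound via $0\le {\rm env}_{\iota_{C}}^{1}(Ax^{k})/\|x^{k}\|_{2}\le Q_{\lambda_{k}}(x^{k})\le \lambda_{k}\|\hat{x}\|_{1}/\|\hat{x}\|_{2}\to 0$, and then, for (ii), controls the scaled multipliers $\tau_{j}=(\|Ax^{k_{j}}-b\|_{2}-\epsilon)_{+}/(\lambda_{k_{j}}\|Ax^{k_{j}}-b\|_{2})$ by contradiction: if $\tau_{j}\to\infty$, dividing the stationarity relation by $\tau_{j}$ and passing to the limit forces $-A^{T}(Ax^{\star}-b)=t_{\star}x^{\star}$, so $x^{\star}$ would solve the auxiliary problem (\ref{relationship_stationary_d}) with $\|Ax^{\star}-b\|_{2}=\epsilon$ and $\|x^{\star}\|_{2}=d$, contradicting ${\rm int}\,D\cap{\rm int}\{x:Ax\in C\}\neq\emptyset$. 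You instead derive one quantitative estimate, $(\|Ax^{k}-b\|_{2}-\epsilon)_{+}\le (C_{1}/\delta)\lambda_{k}$, by pairing the stationarity identity with a Slater point, and this single bound delivers (i), the boundedness of $g^{k}/\lambda_{k}$ (hence of your $\sigma_{k}$, which is exactly the paper's $\tau_{j}$), and ${\rm env}_{\iota_{C}}^{1}(Ax^{k})/\lambda_{k}\to 0$ all at once. The step you flagged as the hard part does hold: writing $v^{k}:=Ax^{k}-\Pi_{C}(Ax^{k})$ with $\Pi_{C}$ the Euclidean projection onto $C$, so that $g^{k}=A^{T}v^{k}$ and $\|v^{k}\|_{2}=(\|Ax^{k}-b\|_{2}-\epsilon)_{+}$, the ball of radius $\delta$ about $A\bar{x}$ is contained in $C$, so testing the projection inequality $\langle v^{k},c-\Pi_{C}(Ax^{k})\rangle\le 0$ with $c=A\bar{x}+\delta v^{k}/\|v^{k}\|_{2}$ gives $\langle v^{k},\Pi_{C}(Ax^{k})-A\bar{x}\rangle\ge\delta\|v^{k}\|_{2}$, whence $\langle g^{k},x^{k}-\bar{x}\rangle=\|v^{k}\|_{2}^{2}+\langle v^{k},\Pi_{C}(Ax^{k})-A\bar{x}\rangle\ge\delta\|v^{k}\|_{2}$; your explicit-projection-plus-Cauchy--Schwarz computation yields the same. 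In exchange for this error-bound argument (which the paper avoids, using only the formulas (\ref{recall_partial}) and elementary limits), you get an explicit $O(\lambda_{k})$ rate on the constraint violation and on the multipliers, dispense with the contradiction argument and the auxiliary problem (\ref{relationship_stationary_d}), and your concluding limit passage (graph-closedness of the convex subdifferential, identification of $h^{\star}$ via (\ref{recall_partial}), then Lemma \ref{charac_constraint_envelop}(i)) coincides in substance with the paper's final step.
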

\begin{proof}
It is obvious that $\{x^{k}:k\in\mathbb{N}\}$ is bounded since $\|x^{k}\|_{2}\le d$ for all $k\in\mathbb{N}$.

We first prove Item (i). We have that 
$$
0\le\frac{{\rm env}_{\iota_{C}}^{1}(Ax^{k})}{\|x^{k}\|_{2}}\le Q_{\lambda_{k}}(x^{k})\le \frac{\lambda_{k}\|\hat{x}\|_{1}}{\|\hat{x}\|_{2}}.
$$
By the fact that $\mathop{\rm lim}\limits_{k\rightarrow \infty}\lambda_{k} = 0$, the above inequality implies that $\mathop{\rm lim}\limits_{k\rightarrow \infty}{\rm env}_{\iota_{C}}^{1}(Ax^{k}) = 0$. We then obtain $Ax^{\star}\in C$ from the definition and continuity of ${\rm env}_{\iota_{C}}^{1}$. We also have $x^{\star}\in D$ because $\|x^{k}\|_{2}\le d$ for $k\in\mathbb{N}$. Since $\|b\|_{2}>\epsilon$ and $Ax^{\star}\in C$, we have $x^{\star}\neq 0_{n}$.

We next prove Item (ii). We consider two different cases: $\|Ax^{\star}-b\|_{2}<\epsilon$ and $\|Ax^{\star}-b\|_{2} = \epsilon$, separately. 

Case 1. Suppose first that $x^{\star}$ satisfies $\|Ax^{\star}-b\|_{2}<\epsilon$. In this case $\partial(\iota_{C}\circ A)(x^{\star}) = \{0_{n}\}$. Then there exists $\{x^{k_{j}}:j\in\mathbb{N}\}$ and $J>0$ such that $\mathop{\rm lim}\limits_{j\rightarrow \infty}x^{k_{j}} = x^{\star}$ and $\|Ax^{k_{j}}-b\|_{2}<\epsilon$ for all $j>J$. By Lemma \ref{charac_constraint_envelop}, $x^{k_{j}}\in D\backslash \{0_{n}\}$ satisfies 
\begin{equation}\label{relationship_stationary_a}
0\in\lambda_{k_{j}}\partial (\|\cdot\|_{1}+\iota_{D})(x^{k_{j}}) - Q_{\lambda_{k_{j}}}(x^{k_{j}})\nabla\|x^{k_{j}}\|_{2} + \nabla({\rm env}_{\iota_{C}}^{1}\circ A)(x^{k_{j}}).
\end{equation}
Since $\|Ax^{k_{j}} - b\|_{2}<\epsilon$ for $j>J$, there hold $Q_{\lambda_{k_{j}}}(x^{k_{j}}) = \frac{\lambda_{k_{j}}\|x\|_{1}}{\|x\|_{2}}$ and $\nabla({\rm env}_{\iota_{C}}^{1}\circ A)(x^{k_{j}}) = 0_{n}$. Therefore, one has from (\ref{relationship_stationary_a}) that 
$
0\in\partial(\|\cdot\|_{1}+\iota_{D})(x^{k_{j}})- \frac{\|x^{k_{j}}\|_{1}}{\|x^{k_{j}}\|_{2}}\nabla\|x^{k_{j}}\|_{2}.
$
Taking limits on both sides of this inclusion, we deduce that 
$
0\in\partial(\|\cdot\|_{1}+\iota_{D})(x^{\star})-\frac{\|x^{\star}\|_{1}}{\|x^{\star}\|_{2}}\nabla\|x^{\star}\|_{2},
$
which indicates that $x^{\star}$ is a stationary point of problem (\ref{l1/l2_minimization_problem_constraint})  by Lemma \ref{charac_constraint_envelop} due to $\partial(\iota_{C}\circ A)(x^{\star}) = \{0_{n}\}$.

Case 2. Suppose now that $x^{\star}$ satisfies $\|Ax^{\star}-b\|_{2} = \epsilon$ and $\mathop{\rm lim}\limits_{j\rightarrow \infty}x^{k_{j}} = x^{\star}$. Lemma \ref{charac_constraint_envelop} leads to that
\begin{equation}\label{relationship_stationary_b}
\begin{aligned}
-\frac{1}{\lambda_{k_{j}}}&\left(\nabla({\rm env}_{\iota_{C}}^{1}\circ A)(x^{k_{j}}) - \frac{{\rm env}_{\iota_{C}}^{1}(Ax^{k_{j}})}{\|x^{k_{j}}\|_{2}}\nabla\|x^{k_{j}}\|_{2}\right)\\&\in\partial(\|\cdot\|_{1}+\iota_{D})(x^{k_{j}}) - \frac{\|x^{k_{j}}\|_{1}}{\|x^{k_{j}}\|_{2}}\nabla \|x^{k_{j}}\|_{2}.
\end{aligned}
\end{equation} 

We first prove $\left\{\tau_{j}:=\frac{(\|Ax^{k_{j}} - b\|_{2}-\epsilon)_{+}}{\lambda_{k_{j}}\|Ax^{k_{j}}-b\|_{2}}:j\in\mathbb{N}\right\}$ is bounded by contradiction. Assume that $\{\tau_{j}:j\in\mathbb{N}\}$ is unbounded. Without loss of generality, we suppose $\mathop{\rm lim}\limits_{j\rightarrow \infty}\tau_{j} = +\infty$. Then, it follows from ${\rm env}_{\iota_{C}}^{\lambda}$, $\nabla({\rm env}_{\iota_{C}}^{\lambda}\circ A)$ and (\ref{relationship_stationary_b}) that 
\begin{equation}\label{relationship_stationary_c}
\begin{aligned}
&-A^{T}(Ax^{k_{j}}-b) + \frac{\|Ax^{k_{j}}-b\|_{2}(\|Ax^{k_{j}}-b\|_{2}-\epsilon)_{+}}{2\|x^{k_{j}}\|_{2}}\nabla\|x^{k_{j}}\|_{2}\\
&\in \left(\partial(\|\cdot\|_{1}+\iota_{D})(x^{k_{j}}) - \frac{\|x^{k_{j}}\|_{1}}{\|x^{k_{j}}\|_{2}}\nabla\|x^{k_{j}}\|_{2}\right)/\tau_{j}.
\end{aligned}
\end{equation}
Since $\partial(\|\cdot\|_{1}+\iota_{D}) = \partial\|\cdot\|_{1}+\partial_{\iota_{D}}$, $\mathop{\cup}\limits_{j = 1}^{\infty}\partial\|x^{k_{j}}\|_{1}$ is bounded, $x^{\star}\neq 0_{n}$ and $\|Ax^{\star} - b\|_{2} = \epsilon$, taking limits on both sides of (\ref{relationship_stationary_c}) and invoking (\ref{recall_partial}), we obtain 
$
-A^{T}(Ax^{\star} - b) = t_{\star}x^{\star}
$
for some $t_{\star}\ge 0$. This implies that $x^{\star}$ satisfies 
\begin{equation}\label{relationship_stationary_d}
x^{\star}\in{\rm arg\,min}\left\{\eta(x) := \frac{1}{2}\|Ax-b\|_{2}^{2}+\frac{t_{\star}}{2}\|x\|_{2}^{2}:x\in\mathbb{R}^{n}\right\}.
\end{equation}
If $t_{\star} = 0$, (\ref{relationship_stationary_d}) contradicts to the fact that ${\rm int} D\cap {\rm int}\{x:Ax\in C\}\neq \emptyset$ due to $\|Ax^{\star} - b\|_{2}=\epsilon$. Thus, $t_{\star}$ should be larger than zero. In this case, we claim that $\|x^{\star}\|_{2} = d$. Otherwise, there exists $J^{'}>0$ such that for all $j>J^{'}$, $\|x^{k_{j}}\|_{2}<d$ and $\partial\iota_{D}(x^{k_{j}}) = \{0_{n}\}$. Then, the limitation of the righthand of (\ref{relationship_stationary_c}) is $\{0_{n}\}$, which implies that $t_{\star} = 0$ contradicting to $t_{\star} > 0$. Then, $\eta(x^{\star}) = \frac{1}{2}\epsilon^{2}+\frac{t^{\star}}{2}d^{2}$. Due to ${\rm int} D\cap {\rm int}\{x:Ax\in C\}\neq \emptyset$, for any $\widetilde{x}\in{\rm int} D\cap {\rm int}\{x:Ax\in C\}$, $\eta(\widetilde{x})<\eta(x^{\star})$, contradicting to (\ref{relationship_stationary_d}). We obtain $\{\tau_{j}:j\in\mathbb{N}\}$ is bounded immediately.

Without loss of generality, we assume $\mathop{\rm lim}\limits_{j\rightarrow \infty} \tau_{j} = \tau_{\star}$. It is obvious that 
$$
\mathop{\rm lim}\limits_{j\rightarrow \infty}\frac{{\rm env}_{\iota_{C}}^{1}(Ax^{k_{j}})}{\lambda_{k_{j}}\|x^{k_{j}}\|_{2}}\nabla\|x^{k_{j}}\|_{2} = \mathop{\rm lim}\limits_{j\rightarrow \infty} \frac{\tau_{j}\|Ax^{k_{j}}-b\|_{2}(\|Ax^{k_{j}}-b\|_{2}-\epsilon)_{+}}{2\|x^{k_{j}}\|_{2}}\nabla\|x^{k_{j}}\|_{2} = 0_{n}.
$$
Thus, we deduce by taking limits on both sides of (\ref{relationship_stationary_c}) that
$$
-\tau_{\star}A^{T}(Ax^{\star} - b)\in\partial(\|\cdot\|_{1}+\iota_{D})(x^{\star}) - \frac{\|x^{\star}\|_{1}}{\|x^{\star}\|_{2}}\nabla\|x^{\star}\|_{2},
$$
which implies that $x^{\star}$ is a stationary point of problem (\ref{l1/l2_minimization_problem_constraint}) by Lemma \ref{charac_constraint_envelop} from $\partial(\iota_{C}\circ A)(x^{\star}) = \{tA^{T}(Ax-b):t\ge 0\}$. We complete the proof immediately.
\end{proof}

We make several remarks on Theorem \ref{relationship_stationary}. First, we do not assume any constraint qualification condition here, although it is a common assumption when analyze the penalty methods. Second, the assumption ${\rm int} D\cap {\rm int}\{x: Ax\in C\}$ can be easily satisfied as long as $d$ is sufficiently large and ${\rm int}\{x: Ax\in C\}\neq \emptyset$. Finally, the assumption that there exists a $\hat{x}\in\{x\in D: Ax\in C\}$ such that $\frac{Q_{\lambda_{k}}(x^{k})}{\lambda_{k}}\le\frac{\|\hat{x}\|_{1}}{\|\hat{x}\|_{2}}$ for all $k\in\mathbb{N}$ can be satisfied by appropriately choosing the initial point of the algorithm for solving the penalty problem (\ref{l1_l2_minimization_problem_constraint_envelop}), if the algorithm generates nonincreasing objective sequence. For instance, let $\hat{x}\in\{x\in D: Ax\in C\}$. The initial point $x^{k,0}\in D$ of the algorithm for solving (\ref{l1_l2_minimization_problem_constraint_envelop}) with $\lambda = \lambda_{k}$ can be chosen as $x^{k,0} = \hat{x}$. Then, any accumulation point $x^{k}$ of the sequence generated by the algorithm satisfies $\frac{Q_{\lambda_{k}}(x^{k})}{\lambda_{k}}\le \frac{Q_{\lambda_{k}}(\hat{x})}{\lambda_{k}}=\frac{\|\hat{x}\|_{1}}{\|\hat{x}\|_{2}}$ due to $A\hat{x}\in C$.

\section{Parameterized proximal-gradient algorithms for solving problem (\ref{l1_l2_minimization_problem_constraint_envelop})}\label{S4}
In this section, we first study a general structured fractional optimization problem, of which problem (\ref{l1_l2_minimization_problem_constraint_envelop}) is a special case. We propose a parameterized proximal-gradient algorithm and its line search counterpart to solve the fractional optimization problem. Then, the proposed algorithm are applied to solving problem (\ref{l1_l2_minimization_problem_constraint_envelop}) and the closed-form solution of the involved proximity operator is given.
\subsection{A general fractional optimization problem}\label{4_1}
We first study in this subsection the following single-ratio fractional optimization problem
\begin{equation}\label{general_optimization_problem}
{\rm min}\left\{\frac{f(x)+h(x)}{g(x)}:x\in\Omega\cap {\rm dom}(f)\right\},
\end{equation}
where $f:\mathbb{R}^{n}\rightarrow \overline{\mathbb{R}}$ is proper lower semicontinuous, $g, h:\mathbb{R}^{n}\rightarrow \mathbb{R}$ and $\Omega:=\{x\in\mathbb{R}^{n}:g(x)\neq 0\}$. By defining $F:\mathbb{R}^{n}\rightarrow\overline{\mathbb{R}}$ at $x\in\mathbb{R}^{n}$ as 
$$
F(x):=\left\{
\begin{aligned}
&\frac{f(x)+h(x)}{g(x)},\qquad {\rm if}\ x\in\Omega\cap{\rm dom}(f),\\
&+\infty, \qquad \qquad \quad\,  {\rm otherwise},
\end{aligned}
\right.
$$
problem (\ref{general_optimization_problem}) can be written as 
\begin{equation}\label{general_optimization_problem_1}
{\rm min}\{F(x):x\in\mathbb{R}^{n}\}.
\end{equation}
We suppose in the whole sections \ref{S4} and \ref{S5}  
that $f, g$ and $h$ satisfy the following blanket assumptions.\\
Assumption 1.\\
(i)\,$f$ is locally Lipschitz continuous on ${\rm dom}(f)\cap \Omega$;\\
(ii)\,$g$ is locally Lipschitz continuously differentiable and positive on $\Omega\cap {\rm dom}(f)$;\\
(iii)\,$h$ is Lipschitz differentiable with a Lipschitz constant $L>0$;\\
(iv)\,$f+h$ is non-negative on ${\rm dom}(f)$, $\Omega\cap {\rm dom}(f)\neq \emptyset$;\\
(v)\,${\rm prox}_{f-\gamma g}(x)\neq \emptyset$ for any $x\in\mathbb{R}^{n}$ and $\gamma\ge0$;\\
(vi)\,$F$ is lower semicontinuous and level bounded.

One can check that problem (\ref{l1_l2_minimization_problem_constraint_envelop}) is a special case of problem (\ref{general_optimization_problem}) with $f = \lambda \|\cdot\|_{1} + \iota_{D}$, $h = {\rm env}_{\iota_{C}}^{1}\circ A$ and $g = \|\cdot\|_{2}$. It is clear that Items (i) and (ii) in Assumption 1 are satisfied in this case. Item (iii) in Assumption 1 holds since ${\rm env}_{\iota_{C}}^{1}\circ A$ is Lipschitz differentiable with a Lipschitz constant $\|A\|_{2}^{2}$ \cite{Na2017On}. Obviously, $f+h\ge 0$ and $\Omega\cap{\rm dom}(f) = \{x\in\mathbb{R}^{n}:0<\|x\|_{2}\le d\}\neq \emptyset$ as $d>0$ in this case. Because the function $f-\gamma g = \lambda\|\cdot\|_{1}-\gamma\|\cdot\|_{2}+\iota_{D}$ is proper, lower semicontinuous and bounded below on $\mathbb{R}^{n}$, ${\rm prox}_{f-\gamma g}(x)\neq \emptyset$ for any $x\in\mathbb{R}^{n}$ and $\gamma\ge0$. The lower semicontinuity and level boundedness of $F$ follow from Lemma \ref{lower_semicontinuous} and the boundedness of $D$ respectively. Thus, problem (\ref{l1_l2_minimization_problem_constraint_envelop}) satisfies Assumption 1. 
\subsection{The parameterized proximal-gradient algorithm (PPGA) for solving problem (\ref{general_optimization_problem})}
In order to solve the single-ratio fractional optimization problem (\ref{general_optimization_problem}), we first review the parametric programming for problem (\ref{general_optimization_problem}), which motivates us to develop the parameterized proximal-gradient algorithm.

The parametric approach, which may date back to Dinkelbach's algorithm \cite{Werner1967On}, applied to problem (\ref{l1_l2_minimization_problem_constraint_envelop}) generates the k-th iteration by solving a subproblem 
\begin{equation}\label{k-th_subproblem}
x^{k+1}\in{\rm arg\,min}\{f(x)-C_{k}g(x)+h(x):x\in\Omega\},
\end{equation}
where $C_{k}$ is updated by $C_{k}:=\frac{f(x^{k})+h(x^{k})}{g(x^{k})}$. In each iteration, one can apply proximal-gradient methods to the subproblem (\ref{k-th_subproblem}) to obtain a stationary point of problem (\ref{k-th_subproblem}). However, such algorithm may be not efficient enough since solving the nonconvex nonsmooth subproblem (\ref{k-th_subproblem}) in each iteration can yield high computational cost. Inspired by the above analysis, we intend to solve a much easier problem instead of problem (\ref{k-th_subproblem}). We propose to use a quadratic approximation for $h(x)$ and solve the following subproblem in each iteration
\begin{equation}\label{k-th_subproblem_first_order}
x^{k+1}\in{\rm arg\,min}\{f(x)-C_{k}g(x)+h(x^{k})+\langle\nabla h(x^{k}),x-x^{k}\rangle+\frac{\|x-x^{k}\|_{2}^{2}}{2\alpha_{k}}:x\in\Omega\},
\end{equation}
where $\alpha_{k}>0$ for $k\in\mathbb{N}$. With the help of the notion of proximity operators, (\ref{k-th_subproblem_first_order}) can be equivalently reformulated as 
$
x^{k+1}\in{\rm prox}_{\alpha_{k}(f-C_{k}g)}(x^{k}-\alpha_{k}\nabla h(x^{k})).
$
It is worth noting that solving (\ref{k-th_subproblem_first_order}) is much easier than solving problem (\ref{k-th_subproblem}) in many applications, since the proximity operator of $f-C_{k}g$ can be explicitly computed in these applications. We will show this point in subsection \ref{4_4}.

The proposed algorithm for solving problem (\ref{general_optimization_problem}) is summarized in Algorithm 1. Since Algorithm 1 involves in the proximity operator of $\alpha_{k}(f-C_{k}g)$, the gradient of $h$ and the parameter $C_{k}$, we refer to it as the parameterized proximal-gradient algorithm (PPGA). We remark here that the step size $\alpha_{k}$ in Algorithm 1 is required to be in $(\underline{\alpha},\overline{\alpha})\subset(0,1/L)$ to ensure the convergence, which will be established in section \ref{S5}.

\begin{algorithm}
\caption{parameterized proximal-gradient algorithm (PPGA) for solving problem (\ref{general_optimization_problem})}
\label{algorithm1}
\textbf{Step 0}: Input $x^{0}\in \Omega\cap{\rm dom}(f)$, $0<\underline{\alpha}\le \alpha_{k}\le\overline{\alpha}<1/L$ for $k\in \mathbb{N}$. Set $k\leftarrow 0$.\\
\textbf{Step 1}:  \quad Compute\\
\indent \qquad \qquad \quad $C_{k} = \frac{f(x^{k})+h(x^{k})}{g(x^{k})}$,\\
\indent \qquad \qquad \quad $x^{k+1}\in{\rm prox}_{\alpha_{k}(f-C_{k}g)}(x^{k}-\alpha_{k}\nabla h(x^{k}))$.\\
\textbf{Step 2}: Set $k\leftarrow k+1$ and go to Step 1.
\end{algorithm}

\subsection{PPGA with line search (PPGA$\_$L)}
In this subsection, we incorporate a line search scheme for adaptively choosing $\alpha_{k}$ into PPGA. In PPGA, the step size $\alpha_{k}$ should be less than $1/L$ for $k\in\mathbb{N}$ to ensure the convergence. However, the step size may be too small in the case of large $L$ and thus may lead to slow convergence of PPGA. To speed up the convergence, we take advantage of the line search technique in \cite{Wright2009Sparse} to enlarge the step size and meanwhile guarantee the convergence of the algorithm. The PPGA with line search scheme (PPGA$\_$L) is described in Algorithm \ref{Algorithm2}.  
\begin{algorithm}
\caption{PPGA with line search (PPGA$\_$L) for problem (\ref{general_optimization_problem})}
\label{Algorithm2}
\textbf{Step 0}: Input $x^{0}\in \Omega\cap{\rm dom}(f)$, $a\ge 0$, $0<\underline{\alpha}<\overline{\alpha}$, $0<\eta<1$,  and an integer $N\ge 0$. Set $k\leftarrow 0$.\\
\textbf{Step 1}:  $C_{k} = \frac{f(x^{k})+h(x^{k})}{g(x^{k})}$,\\
\indent\qquad\qquad choose $\alpha_{k,0}\in[\underline{\alpha},\overline{\alpha}]$\\
\textbf{Step 2}: For $m = 0,1,...,$ do\\
\indent\qquad\qquad\qquad $\alpha_{k} = \alpha_{k,0}\eta^{m}$,\\
\indent\qquad\qquad\qquad $\widetilde{x}^{k+1}\in {\rm prox}_{\alpha_{k}(f-C_{k}g)}(x^{k}-\alpha_{k}\nabla h(x^{k}))$,\\
\indent\qquad\qquad\qquad If $\widetilde{x}^{k+1}$ satisfies $\widetilde{x}^{k+1}\in \Omega\cap{\rm dom}(f)$ and \\
\indent\qquad\qquad(c) \qquad \ \ \ $F(\widetilde{x}^{k+1})\le \mathop{\rm max}\limits_{[k-N]_{+}\le j\le k}C_{j}-\frac{a}{2}\|\widetilde{x}^{k+1}-x^{k}\|_{2}^{2}$,\\
\indent\qquad\qquad\qquad Set $x^{k+1} = \widetilde{x}^{k+1}$ and go to Step 3.\\
\textbf{Step 3}: $k\leftarrow k+1$ and go to Step 1. 
\end{algorithm}

From the inequality (c) of Algorithm \ref{Algorithm2}, $\{F(x^{k}):k\in\mathbb{N}\}$ is monotone when $N=0$, while it is generally nonmonotone as $N>0$. For convenience of presentation, we refer to PPGA$\_$L as PPGA$\_$ML if $N=0$ and PPGA$\_$NL if $N>0$. Set $\Delta x:=x^{k}-x^{k-1}$, $\Delta h:= \nabla h(x^{k})- \nabla h(x^{k-1})$. Motivated from \cite{Barzilai1988Two,Lu2019Nonmonotone,Wright2009Sparse}, we usually choose $\alpha_{k,0}$ in the following formula
$$
\alpha_{k,0} = \left\{
\begin{aligned}
&{\rm max}\{\underline{\alpha},{\rm min}\{\overline{\alpha},\frac{\|\Delta x\|_{2}^{2}}{|\langle\Delta x,\Delta h\rangle|}\}\},\qquad {\rm if}\ \langle\Delta x,\Delta h\rangle\neq 0,\\
&\overline{\alpha}, \qquad \qquad \qquad \qquad \qquad \qquad \quad \ \ \, {\rm otherwise}.
\end{aligned}
\right.
$$ 
This choice of $\alpha_{k,0}$ can be viewed as an adaptive approximation of $1/L$ via some local curvature information of $h$. We will prove in the section \ref{5_4} that Step 2 in PPGA$\_$L terminates in finite steps and PPGA$\_$ML converges globally.
\subsection{Applications of PPGA and PPGA$\_$L to problem (\ref{l1_l2_minimization_problem_constraint_envelop})}\label{4_4}
We apply in this subsection PPGA and PPGA$\_$L to problem (\ref{l1_l2_minimization_problem_constraint_envelop}). As mentioned in subsection \ref{4_1}, 
problem (\ref{l1_l2_minimization_problem_constraint_envelop}) is a special case of problem (\ref{general_optimization_problem}) with $f = \lambda\|\cdot\|_{1}+\iota_{D}$, $h = {\rm env}_{\iota_{C}}^{1}\circ A$ and $g = \|\cdot\|_{2}$. Then, the Lipschitz constant $L$ of $h$ should be $\|A\|_{2}^{2}$ in this case. We next dedicate to showing that ${\rm prox}_{\alpha_{k}(f-C_{k}g)}$ can be efficiently computed in this case.

For $\gamma>0$, we define $\rho_{\gamma}:\mathbb{R}^{n}\rightarrow \overline{\mathbb{R}}$ as 
\begin{equation}\label{l1-l2-norm-constrained}
\rho_{\gamma} := \|\cdot\|_{1} - \gamma\|\cdot\|_{2} + \iota_{D}.
\end{equation}
Then, $\alpha_{k}(f-C_{k}g) = \alpha_{k}\lambda\rho_{\gamma}$ with $\gamma = C_{k}/\lambda$ for problem (\ref{l1_l2_minimization_problem_constraint_envelop}). We establish the closed-form solution of ${\rm prox}_{\beta\rho_{\gamma}}$ for $\beta,\gamma>0$ in the next proposition, which is inspired by Lemma 3.1 in \cite{Yifei2018Fast}. Before that, we first define the soft thresholding operator $\mathcal{S}:\mathbb{R}^{n}\times \mathbb{R}_{+}\rightarrow\mathbb{R}^{n}$ at $(y,\alpha)\in\mathbb{R}^{n}\times\mathbb{R}_{+}$ as
$$
\mathcal{S}(y,\alpha) := \left\{
\begin{aligned}
&y-\alpha, \qquad  {\rm if}\ y>\alpha,\\
&0, \qquad \qquad {\rm if}\ |y|\le \alpha,\\
&y+\alpha, \qquad \, {\rm otherwise}.
\end{aligned}
\right.
$$
\begin{proposition}
Let $y\in\mathbb{R}^{n}$, $\beta,\gamma>0$ and $\rho_{\gamma}$ be defined by (\ref{l1-l2-norm-constrained}). Then the following statements hold:\\
(i)If $\|y\|_{\infty}>\beta$,
$$
{\rm prox}_{\beta\rho_{\gamma}}(y) = \left\{
\begin{aligned}
&\frac{z(\|z\|_{2} + \beta\gamma)}{\|z\|_{2}}, \qquad  {\rm if} \ \|z\|_{2}\le d-\beta\gamma, \\
&\frac{zd}{\|z\|_{2}}, \qquad \qquad \quad \ \ {\rm otherwise},
\end{aligned}
\right.
$$ 
where $z = \mathcal{S}(y,\beta)$;\\
(ii) If $\|y\|_{\infty} = \beta$, $x^{\star}\in {\rm prox}_{\beta\rho_{\gamma}}(y)$ if and only if $\|x^{\star}\|_{2} = {\rm min}\{\beta\gamma,d\}$, $x_{i}^{\star} = 0$ for $i\in\{j\in\mathbb{N}_{n}:|y_{j}|<\beta\}$ and $x_{i}^{\star}y_{i}\ge 0$ for $i\in \{j\in \mathbb{N}_{n}: |y_{i}| = \beta\}$;
\\
(iii)If $(1-\gamma)\beta<\|y\|_{\infty}<\beta$, $x^{\star}\in {\rm prox}_{\beta\rho_{\gamma}}(y)$ if and only if $x^{\star}$ is a 1-sparse vector satisfying $\|x^{\star}\|_{2} = {\rm min}\{\|y\|_{\infty}+(\gamma-1)\beta,d\}$, $x_{i}^{\star}y_{i}\ge 0$ for $i\in\mathbb{N}_{n}$, $x_{i}^{\star} = 0$ for $i\in\{j\in\mathbb{N}_{n}:|y_{j}|<\|y\|_{\infty}\}$;\\
(iv)If $\|y\|_{\infty}\le (1-\gamma)\beta$, ${\rm prox}_{\beta\rho_{\gamma}}(y) = \{0_{n}\}$.
\end{proposition}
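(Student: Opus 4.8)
The plan is to work directly from the definition: $x^{\star}\in{\rm prox}_{\beta\rho_{\gamma}}(y)$ exactly when $x^{\star}$ minimizes
$$
\Phi(x):=\beta\|x\|_{1}-\beta\gamma\|x\|_{2}+\iota_{D}(x)+\tfrac12\|x-y\|_{2}^{2}.
$$
Since $\Phi$ is lower semicontinuous and ${\rm dom}(\Phi)=D$ is compact, a minimizer exists, so it suffices to identify the minimizers. First I would treat $x=0$ separately, where $\Phi(0)=\tfrac12\|y\|_{2}^{2}$. For $x\neq0$ I would set $r:=\|x\|_{2}\in(0,d]$; on the sphere $\{\|x\|_{2}=r\}$ the term $\iota_{D}$ vanishes and $-\beta\gamma\|x\|_{2}$ equals the constant $-\beta\gamma r$, so
$$
\min_{\|x\|_{2}=r}\Phi(x)=\mu(r)+\tfrac12 r^{2}-\beta\gamma r+\tfrac12\|y\|_{2}^{2}=:q(r),\qquad \mu(r):=\min_{\|x\|_{2}=r}\bigl(\beta\|x\|_{1}-\langle x,y\rangle\bigr).
$$
The whole problem thus reduces to evaluating $\mu$, minimizing the one-dimensional quadratic $q$ over $(0,d]$, and comparing the outcome with $\Phi(0)$.

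Evaluating $\mu(r)$ together with the exact description of its minimizers is the heart of the argument and the place where the case distinction of the proposition originates; I expect this to be the main obstacle. Writing $z=\mathcal{S}(y,\beta)$, so that $|z_{i}|=(|y_{i}|-\beta)_{+}$, I would exploit for every $x$ the elementary chain
$$
\langle x,y\rangle-\beta\|x\|_{1}\le\sum_{i}|x_{i}|(|y_{i}|-\beta)\le\sum_{i}|x_{i}|(|y_{i}|-\beta)_{+}\le\|x\|_{2}\,\|z\|_{2},
$$
the last step by Cauchy--Schwarz. When $\|y\|_{\infty}>\beta$ (so $z\neq0$) this gives $\mu(r)=-r\|z\|_{2}$, and tracking the equality conditions — namely $x_{i}y_{i}\ge0$ for all $i$, $x_{i}=0$ whenever $|y_{i}|<\beta$, and $|x|$ proportional to $|z|$ — shows the unique sphere minimizer is $x=rz/\|z\|_{2}$. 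When $\|y\|_{\infty}\le\beta$ one uses instead $\sum_{i}|x_{i}|(|y_{i}|-\beta)\le(\|y\|_{\infty}-\beta)\|x\|_{1}\le(\|y\|_{\infty}-\beta)\|x\|_{2}$, valid because $\|y\|_{\infty}-\beta\le0$ and $\|x\|_{1}\ge\|x\|_{2}$; this yields $\mu(r)=r(\beta-\|y\|_{\infty})$, and the equality analysis shows the sphere minimizers are precisely the $x$ with $x_{i}=0$ for $|y_{i}|<\beta$ and $x_{i}y_{i}\ge0$ for $|y_{i}|=\beta$, with the additional requirement of being $1$-sparse (supported on some $i_{0}$ with $|y_{i_{0}}|=\|y\|_{\infty}$) exactly when $\|y\|_{\infty}<\beta$, since then $\|x\|_{1}\ge\|x\|_{2}$ is strict unless $x$ is $1$-sparse.

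It then remains to substitute $\mu$ into $q$ and minimize. If $\|y\|_{\infty}>\beta$, then $q(r)=\tfrac12 r^{2}-(\|z\|_{2}+\beta\gamma)r+\tfrac12\|y\|_{2}^{2}$ is strictly convex with vertex $\|z\|_{2}+\beta\gamma>0$; hence its minimizer over $(0,d]$ is $\min\{\|z\|_{2}+\beta\gamma,\,d\}$, the corresponding value is strictly below $\Phi(0)$, and pairing this with the sphere minimizer gives item (i), with the two displayed sub-cases according to whether $\|z\|_{2}\le d-\beta\gamma$. If $\|y\|_{\infty}\le\beta$, then $q(r)=\tfrac12 r^{2}-\bigl(\|y\|_{\infty}-(1-\gamma)\beta\bigr)r+\tfrac12\|y\|_{2}^{2}$ has vertex $\|y\|_{\infty}+(\gamma-1)\beta$: when this is $\le0$ (equivalently $\|y\|_{\infty}\le(1-\gamma)\beta$) the function $q$ is increasing on $(0,d]$ with $q(r)>\Phi(0)$ there, so $x=0$ is the unique minimizer, giving item (iv); otherwise the minimizer of $q$ on $(0,d]$ is $\min\{\|y\|_{\infty}+(\gamma-1)\beta,\,d\}>0$ with value strictly below $\Phi(0)$, and combining with the sphere-minimizer description gives item (ii) when $\|y\|_{\infty}=\beta$ (vertex $\beta\gamma$) and item (iii) when $(1-\gamma)\beta<\|y\|_{\infty}<\beta$, the $1$-sparsity of item (iii) being exactly the one flagged in the previous paragraph. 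Only $\beta,\gamma,d>0$ is used; the one genuinely delicate part is the bookkeeping in the equality cases of the chain above.
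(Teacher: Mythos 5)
Your argument is correct, and it is a genuinely different route from the paper's. The paper proves the proposition by writing the Lagrangian of the ball-constrained problem, invoking the KKT conditions together with the subdifferential inclusion $\partial(\|\cdot\|_{1}-\|\cdot\|_{2})\subseteq\partial\|\cdot\|_{1}-\partial\|\cdot\|_{2}$ to obtain the candidate relations for stationary points, and then selecting among these candidates via an identity that expresses the objective value in terms of $\|x^{\star}\|_{2}$ and the multiplier $\eta^{\star}$ (so that the prox points are the KKT points of largest norm); the four cases are then read off from the multiplier analysis. You instead decompose the minimization over spheres: you solve the inner problem $\min_{\|x\|_{2}=r}\bigl(\beta\|x\|_{1}-\langle x,y\rangle\bigr)$ in closed form through the elementary chain (sign condition, soft-threshold truncation, Cauchy--Schwarz when $\|y\|_{\infty}>\beta$, and the bound $(\|y\|_{\infty}-\beta)\|x\|_{1}\le(\|y\|_{\infty}-\beta)\|x\|_{2}$ when $\|y\|_{\infty}\le\beta$), and the equality cases of this chain are exactly the structural descriptions (support, signs, proportionality to $\mathcal{S}(y,\beta)$, $1$-sparsity) appearing in items (i)--(iii); the outer problem is a scalar quadratic in $r$ over $(0,d]$ whose vertex location reproduces the thresholds $\|z\|_{2}+\beta\gamma$, $\beta\gamma$, $\|y\|_{\infty}+(\gamma-1)\beta$ and, compared against $\Phi(0_{n})$, yields item (iv). What each approach buys: yours is more elementary and self-contained (no Lagrange multipliers or nonsmooth calculus), delivers the ``if and only if'' characterizations directly from equality tracking, and makes the comparison with $x=0_{n}$ transparent, also covering degenerate situations such as $y=0_{n}$ with $\gamma>1$ without extra care; the paper's KKT route produces the multiplier $\eta^{\star}$ explicitly (hence identifies when the constraint $\|x\|_{2}\le d$ is active), at the cost of less transparent selection steps such as choosing the KKT point of largest norm.
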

\begin{proof}
It is straightforward to obtain the following relations about the sign and order for the components of any $x^{\star}$ in ${\rm prox}_{\beta\rho_{\gamma}}(y)$:
\begin{equation}\label{prox_1}
x_{i}^{\star}\left\{
\begin{aligned}
&\ge 0, \qquad  {\rm if} \ y_{i}>0, \\
&\le 0, \qquad {\rm else},
\end{aligned}
\right.
\end{equation} 
and 
\begin{equation}\label{prox_8}
|x_{i}^{\star}|\ge |x_{j}^{\star}|\quad {\rm if}\ |y_{i}|\ge |y_{j}|.
\end{equation}
Otherwise, we can always change the sign of $x^{\star}$ or swap the absolute values of $x_{i}^{\star}$ and $x_{j}^{\star}$ to obtain a smaller objective. Therefore, we can assume without loss of generality that $y_{1}\ge y_{2}\ge \cdots \ge y_{n}\ge 0$.

Let $G:\mathbb{R}^{n}\rightarrow \mathbb{R}$ and $H:\mathbb{R}^{n}\rightarrow \mathbb{R}$ be defined at $x\in\mathbb{R}^{n}$ as $G(x):=\|x\|_{1} - \gamma\|x\|_{2} + \frac{1}{2\beta}\|x-y\|_{2}^{2}$ and $H(x):=d-\|x\|_{2}$, respectively. Then, the Lagrange function for the optimization problem 
$$
{\rm prox}_{\beta\rho_{\gamma}}(y) = {\rm arg\,min}\{\|x\|_{1}-\gamma\|x\|_{2}+\frac{1}{2\beta}\|x-y\|_{2}^{2}:\|x\|_{2}\le d\}
$$ 
has the form of 
$$
L(x,\eta) = G(x) - \eta H(x),
$$
where $\eta\ge 0$ is the Lagrange multipliers. Thus, any $x^{\star}\in{\rm prox}_{\beta\rho_{\gamma}}(y)$ must satisfy the following KKT condition for some $\eta^{\star}\ge 0$, due to the Lipschitz continuity of the objective,
\begin{equation}\label{prox_2}
\left\{
\begin{aligned}
&0\in\partial_{x}L(x^{\star},\eta^{\star}),\\
&\|x^{\star}\|_{2}\le d,\\
&\eta^{\star}\ge 0,\\
&\eta^{\star}(d-\|x^{\star}\|_{2}) = 0.
\end{aligned}
\right.
\end{equation}
Recall that $\partial\|\cdot\|_{2}(0_{n}) = \{\gamma\in\mathbb{R}^{n}:\|\gamma\|_{2}\le 1\}$ 
and $\partial(\|\cdot\|_{1} - \|\cdot\|_{2})\subseteq \partial\|\cdot\|_{1} - \partial\|\cdot\|_{2}$. Thus, the first relation in (\ref{prox_2}) implies that one of the following relations holds for some $p\in\partial\|\cdot\|_{1}(x^{\star})$:
\begin{equation}\label{prox_3}
(1-\frac{\beta\gamma-\eta^{\star}\beta}{\|x^{\star}\|_{2}})x^{\star} = y - \beta p \ {\rm and} \ x^{\star}\neq 0_{n},
\end{equation}
\begin{equation}\label{prox_4}
\|y-\beta p\|_{2}\le |\eta^{\star}\beta-\beta\gamma| \ {\rm and} \ x^{\star} = 0_{n}.
\end{equation}
One can easily check that for any $x^{\star}$ satisfying (\ref{prox_2}), there holds 
\begin{equation}\label{prox_5}
\begin{aligned}
G(x^{\star}) &= \|x^{\star}\|_{1} - \gamma\|x^{\star}\|_{2} + \frac{1}{2\beta}\|x^{\star}\|_{2}^{2} + \frac{1}{2\beta}\|y\|_{2}^{2} - \langle x^{\star},p+(\frac{1}{\beta}-\frac{\gamma-\eta^{\star}}{\|x^{\star}\|_{2}})x^{\star}\rangle\\
&=-\gamma\|x^{\star}\|_{2} + \frac{1}{2\beta}\|x^{\star}\|_{2}^{2} - (\frac{1}{\beta}-\frac{\gamma-\eta^{\star}}{\|x^{\star}\|_{2}})\|x^{\star}\|_{2}^{2} + \frac{1}{2\beta}\|y\|_{2}^{2}\\
&= -\frac{1}{2\beta}\|x^{\star}\|_{2}^{2} + \frac{1}{2\beta}\|y\|_{2}^{2} - \eta^{\star}\|x^{\star}\|_{2}\\
&\le G(0_{n}).
\end{aligned}
\end{equation}
Therefore, we need to find $x^{\star}$ with the largest norm among all $x^{\star}$ satisfying (\ref{prox_2}). In the following, we discuss the four cases listed in this proposition respectively.\\
(i) $y_{1}>\beta$. In this case, $y_{1}-\beta p_{1}>0$. Then (\ref{prox_1}) and (\ref{prox_3}) yield that $1-\frac{\beta\gamma-\eta^{\star}\beta}{\|x^{\star}\|_{2}}>0$. Thus, $x_{i}^{\star} = 0$ for $i\in\{j\in\mathbb{N}_{n}:y_{j}\le \beta\}$. Then, we deduce from (\ref{prox_3}) that $y-\beta p = \mathcal{S}(y,\beta)$ and 
\begin{equation}\label{prox_6}
\|x^{\star}\|_{2} = \|\mathcal{S}(y,\beta)\|_{2} + \beta\gamma -\eta^{\star}\beta.
\end{equation}
By the KKT condition (\ref{prox_2}) and (\ref{prox_6}), we have 
$$
\eta^{\star} = \left\{
\begin{aligned}
&0,\qquad \qquad \qquad \qquad \quad \ \ {\rm if} \ \|\mathcal{S}(y,\beta)\|_{2}+\beta\gamma\le d,\\
&\frac{\|\mathcal{S}(y,\beta)\|_{2}-d}{\beta} + \gamma, \qquad {\rm otherwise},
\end{aligned}
\right.
$$ 
$$
\|x^{\star}\|_{2} = \left\{
\begin{aligned}
&\|\mathcal{S}(y,\beta)\|_{2} + \beta\gamma, \qquad {\rm if} \|\mathcal{S}(y,\beta)\|_{2} + \beta\gamma\le d,\\
&d, \qquad \qquad \qquad  \qquad \ \, {\rm otherwise}.
\end{aligned}
\right.
$$
The above two relations and (\ref{prox_3}) imply the desired results.\\
(ii) $y_{1} = \beta$. We conclude from (\ref{prox_3}) that the left side of (\ref{prox_3}) is $0_{n}$. Thus, we have $x_{i}^{\star} = 0$ for $i\in\{j\in\mathbb{N}_{n}:y_{j}<\beta\}$ due to $p\in\partial \|\cdot\|_{1}(x^{\star})$ and $\|x^{\star}\|_{2} = \beta\gamma-\eta^{\star}\beta$. By the KKT condition and (\ref{prox_6}), $\|x^{\star}\|_{2} = {\rm min}\{d,\beta\gamma\}$ and $\eta^{\star} = \frac{\beta\gamma-{\rm min}\{d,\beta\gamma\}}{\beta}$. Then, any $x^{\star}\in{\rm prox}_{\beta\rho_{\gamma}}(y)$ should satisfy $\|x^{\star}\|_{2} = {\rm min}\{d,\beta\gamma\}$, $x_{i}^{\star}y_{i}\ge 0$ for $i\in\mathbb{N}_{n}$ and $x_{i}^{\star} = 0$ for $i\in\{j\in\mathbb{N}_{n}:y_{j}<\beta\}$. We next show that any $x^{\star}$ satisfying the above conditions belongs to ${\rm prox}_{\beta\rho_{\gamma}}(y)$, i.e., $G(x^{\star}) =$ const. One can check it by (\ref{prox_5}) and Item (ii) follows immediately.\\
(iii) $(1-\gamma)\beta<y_{1}<\beta$. One has $x_{1}^{\star}>0$ from (\ref{prox_1}) and (\ref{prox_8}). Then, it follows from (\ref{prox_3}) that $1-\frac{\beta\gamma-\eta^{\star}\beta}{\|x^{\star}\|_{2}}<0$, which leads to $\|x^{\star}\|_{2}<(\gamma-\eta^{\star})\beta$. We first show $x_{i}^{\star}=0$ for $i\in\{j\in\mathbb{N}_{n}:y_{j}<y_{1}\}$. Otherwise, there exist $x_{i}^{\star}>0$ and $y_{i}<y_{1}$. Then
$$
y_{1}-\beta p_{1} = y_{1}-\beta = (1-\frac{\beta\gamma-\eta^{\star}\beta}{\|x^{\star}\|_{2}})x_{1}^{\star} \le (1-\frac{\beta\gamma-\eta^{\star}\beta}{\|x^{\star}\|_{2}})x_{i}^{\star} = y_{i} - \beta p_{i} = y_{i} - \beta,
$$
which contradicts to $y_{i}<y_{1}$. We know from (\ref{prox_3}) that 
\begin{equation}\label{prox_7}
\|x^{\star}\|_{2} = (x-\eta^{\star})\beta-\|y-\beta p\|_{2}.
\end{equation} 
By (\ref{prox_2}) and (\ref{prox_6}), we should find $x^{\star}$ satisfying (\ref{prox_2}) with the largest norm. If $d\ge y_{1}+(\gamma-1)\beta$, we get from (\ref{prox_7}) that $x^{\star}$ should be chosen as a 1-sparse vector with $\|x^{\star}\|_{2} = \beta\gamma - (\beta-y_{1}) = y_{1} + (\gamma - 1)\beta\le d$. Else, $\|x^{\star}\|_{2}$ should be $d$. From (\ref{prox_5}), $\eta^{\star}$ should be chosen as the largest one among all $\eta^{\star}$ satisfying the KKT condition (\ref{prox_2}). Then, (\ref{prox_7}) encourages us to choose $x^{\star}$ to be a 1-sparse vector with $\|x^{\star}\|_{2} =  d$ and $\eta^{\star} = \frac{y_{1}+(\gamma-1)\beta-d}{\beta}>0$. We then obtain Item (iii) immediately.\\
(iv) $y_{1}\le (1-\gamma)\beta$. It is clear that (\ref{prox_3}) has no solution and (\ref{prox_4}) holds for $x^{\star} = 0_{n}$ and $\eta^{\star} = 0$. Then,  we get the desired result immediately.
\end{proof}

\section{Convergence analysis}\label{S5}
This section is devoted to the convergence analysis of PPGA and PPGA$\_$L for problem (\ref{general_optimization_problem}). To this end, we first propose a necessary and sufficient condition for stationary points of $F$, and then present a necessary condition based on proximity operators and gradients of related functions for stationary points of $F$. Next, we establish the convergence of objective function values and the subsequential convergence for PPGA. By assuming the KL property of the objective, we further prove the convergence of the whole sequence generated by PPGA. Then, we show subsequential convergence of PPGA$\_$NL and global sequential convergence of PPGA$\_$ML under the KL property assumption. Finally, by showing the objective function $Q_{\lambda}$ of problem (\ref{l1_l2_minimization_problem_constraint_envelop}) is semialgebraic, we obtain the global sequential convergence of PPGA and PPGA$\_$ ML applied to solving problem (\ref{l1_l2_minimization_problem_constraint_envelop}).
\subsection{Stationary points of $F$}
We study in this subsection the stationary points of $F$. A necessary and sufficient condition for stationary point of F based on Fr{\'e}chet subdifferential of $f$ and gradients of $g$ and $h$ is first proposed. Then, utilizing proximity operators and gradients of the related functions, we present a necessary condition for stationary point of $F$. This will help us proving any accumulation points of the sequence generated by PPGA or PPGA$\_$L is a stationary point of $F$.

We establish a necessary and sufficient condition for stationary points of $F$ in the next proposition.
\begin{proposition}\label{Directional stationary 1}
The vector $x^{\star}\in{\rm dom}(F)$ is a stationary point of $F$ if and only if
$
0\in \hat{\partial}f(x^{\star}) + \nabla h(x^{\star}) - C_{\star}\nabla g(x^{\star}),
$
where $C_{\star}:=F(x^{\star})$.
\end{proposition}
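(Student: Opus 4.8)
The plan is to unfold the definition of a stationary point of $F$, namely $0 \in \hat{\partial} F(x^{\star})$, and then compute $\hat{\partial} F(x^{\star})$ explicitly using the quotient subdifferential calculus already recorded in Proposition~\ref{P2.1}. First I would observe that $x^{\star} \in {\rm dom}(F)$ means $x^{\star} \in \Omega \cap {\rm dom}(f)$, so $g(x^{\star}) > 0$ by Assumption~1(ii) and $f(x^{\star}) + h(x^{\star}) \ge 0$ by Assumption~1(iv). This puts us in the setting of Proposition~\ref{P2.1} with $f_1 = f + h$ and $f_2 = g$: indeed $f_1$ is proper, $f_2$ is real-valued and, being locally Lipschitz continuously differentiable near $x^{\star}$ (Assumption~1(ii)), is in particular differentiable at $x^{\star}$, while $f_1$ is locally Lipschitz continuous near $x^{\star}$ (hence continuous relative to its domain) since $f$ is locally Lipschitz on ${\rm dom}(f) \cap \Omega$ by Assumption~1(i) and $h$ is Lipschitz differentiable by Assumption~1(iii). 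The hypotheses $a_1 := f_1(x^{\star}) \ge 0$ and $a_2 := f_2(x^{\star}) > 0$ hold by the remarks above.

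Next I would apply the differentiable version of Proposition~\ref{P2.1} to obtain
$$
\hat{\partial} F(x^{\star}) = \frac{g(x^{\star})\,\hat{\partial}(f+h)(x^{\star}) - (f(x^{\star})+h(x^{\star}))\,\nabla g(x^{\star})}{g(x^{\star})^{2}}.
$$
Since $h$ is differentiable at $x^{\star}$, the sum rule for Fr\'echet subdifferentials gives $\hat{\partial}(f+h)(x^{\star}) = \hat{\partial} f(x^{\star}) + \nabla h(x^{\star})$ (this is one of the calculus facts stated in Section~\ref{S2}). Substituting and recalling that $C_{\star} = F(x^{\star}) = (f(x^{\star})+h(x^{\star}))/g(x^{\star})$, the numerator becomes $g(x^{\star})\big(\hat{\partial} f(x^{\star}) + \nabla h(x^{\star}) - C_{\star}\nabla g(x^{\star})\big)$, so
$$
\hat{\partial} F(x^{\star}) = \frac{1}{g(x^{\star})}\Big(\hat{\partial} f(x^{\star}) + \nabla h(x^{\star}) - C_{\star}\nabla g(x^{\star})\Big).
$$
Because $g(x^{\star}) > 0$, the condition $0 \in \hat{\partial} F(x^{\star})$ is equivalent to $0 \in \hat{\partial} f(x^{\star}) + \nabla h(x^{\star}) - C_{\star}\nabla g(x^{\star})$, which is exactly the claimed characterization.

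The only point requiring a little care — and the place I would expect the main (albeit mild) obstacle — is verifying the regularity/continuity hypotheses of Proposition~\ref{P2.1} precisely at $x^{\star}$: one must check that $f+h$ is continuous at $x^{\star}$ relative to ${\rm dom}(f+h) = {\rm dom}(f)$ and that $g$ is locally Lipschitz (indeed $C^1$) near $x^{\star}$, both of which follow directly from Assumption~1 but should be spelled out. Everything else is a routine substitution and the observation that scaling a set by the positive constant $1/g(x^{\star})$ does not affect membership of $0$. I would present the argument in the order above: (1) locate $x^{\star}$ in the domain and record $g(x^{\star})>0$, $a_1 \ge 0$; (2) invoke Proposition~\ref{P2.1}; (3) apply the sum rule to split off $\nabla h(x^{\star})$; (4) rewrite the numerator using the definition of $C_{\star}$; (5) conclude by dividing out the positive scalar $g(x^{\star})$.
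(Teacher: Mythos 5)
Your proposal is correct and follows essentially the same route as the paper's proof: both apply Proposition~\ref{P2.1} with $f_{1}=f+h$ and $f_{2}=g$ to get $\hat{\partial}F(x^{\star}) = \bigl(g(x^{\star})(\hat{\partial}f(x^{\star})+\nabla h(x^{\star}))-(f(x^{\star})+h(x^{\star}))\nabla g(x^{\star})\bigr)/g(x^{\star})^{2}$ and then conclude by dividing out the positive scalar $g(x^{\star})$. Your write-up merely spells out the hypothesis checks and the sum rule that the paper leaves implicit.
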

\begin{proof}
By Proposition \ref{P2.1}, we know that for $x\in{\rm dom}(F)$,
$$
\hat{\partial}F(x) = \frac{a_{2}(\hat{\partial}f(x)+\nabla h(x))-a_{1}\nabla g(x)}{a_{2}^{2}},
$$
where $a_{1} = f(x) + h(x)$ and $a_{2} = g(x)$. We then obtain this proposition immediately.
\end{proof}

We next derive a sufficient condition for stationary points of $F$ in the following proposition.
\begin{proposition}\label{sufficient_condition_directional_stationary}
If $x^{\star}\in{\rm dom}(F)$ satisfies
\begin{equation}\label{fixed-point-directional-stationary}
x^{\star}\in {\rm prox}_{\alpha(f-C_{\star}g)}(x^{\star}-\alpha\nabla h(x^{\star}))
\end{equation}
for some $\alpha>0$ and $C_{\star} = F(x^{\star})$, then $x^{\star}$ is a stationary point of $F$.
\end{proposition}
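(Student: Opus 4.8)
The plan is to translate the fixed-point inclusion \eqref{fixed-point-directional-stationary} into a subdifferential inclusion and then match it against the necessary-and-sufficient characterization in Proposition \ref{Directional stationary 1}. First I would write out what \eqref{fixed-point-directional-stationary} means in terms of the definition of the proximity operator: $x^{\star}$ is a minimizer of $y\mapsto (f-C_{\star}g)(y)+\tfrac{1}{2\alpha}\|y-(x^{\star}-\alpha\nabla h(x^{\star}))\|_{2}^{2}$. By the Fermat rule for this minimization problem, $0_{n}$ lies in the Fr\'echet subdifferential of this objective at $x^{\star}$ (being a global minimizer, it is in particular a Fr\'echet-stationary point). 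The quadratic term is smooth, so its gradient at $x^{\star}$ is $\tfrac{1}{\alpha}(x^{\star}-(x^{\star}-\alpha\nabla h(x^{\star})))=\nabla h(x^{\star})$, which contributes additively to the Fr\'echet subdifferential by the sum rule recalled in Section \ref{S2} for a function plus a differentiable function.

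Next I would handle the term $f-C_{\star}g$. Since $g$ is (locally Lipschitz) continuously differentiable by Assumption 1(ii) and $C_{\star}$ is a fixed scalar, $-C_{\star}g$ is continuously differentiable, so again by the sum rule $\hat{\partial}(f-C_{\star}g)(x^{\star})=\hat{\partial}f(x^{\star})-C_{\star}\nabla g(x^{\star})$. Combining this with the previous paragraph, the Fermat condition becomes
\[
0\in\hat{\partial}f(x^{\star})-C_{\star}\nabla g(x^{\star})+\nabla h(x^{\star}).
\]
This is exactly the condition in Proposition \ref{Directional stationary 1} with $C_{\star}=F(x^{\star})$, so that proposition immediately gives that $x^{\star}$ is a stationary point of $F$. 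One small point to make explicit: the hypothesis states $C_{\star}=F(x^{\star})$, and since $x^{\star}\in{\rm dom}(F)$ we have $x^{\star}\in\Omega\cap{\rm dom}(f)$, so $g(x^{\star})>0$ by Assumption 1(ii) and $F(x^{\star})=(f(x^{\star})+h(x^{\star}))/g(x^{\star})$ is well-defined and finite; this is what lets us invoke Proposition \ref{Directional stationary 1}.

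I do not expect a serious obstacle here — the result is essentially a bookkeeping exercise with subdifferential calculus. The one place that warrants care is the direction of the sum-rule inclusions: the additive formulas for $\hat{\partial}(\varphi_{1}+\varphi_{2})$ when $\varphi_{2}$ is differentiable are exact equalities (not just one-sided inclusions), as recorded in Section \ref{S2}, so no information is lost when passing from the Fermat rule on the prox-objective to the inclusion above; I would state this explicitly to keep the argument airtight. (Note the statement only claims \eqref{fixed-point-directional-stationary} is \emph{sufficient} for stationarity, so even a one-sided inclusion $\hat{\partial}f(x^{\star})-C_{\star}\nabla g(x^{\star})+\nabla h(x^{\star})\subseteq \hat{\partial}(\text{prox-objective})(x^{\star})$ in the right direction would suffice — but in fact we have equality.)
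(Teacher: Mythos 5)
Your proposal is correct and takes essentially the same route as the paper's own proof: rewrite the prox inclusion as a global minimization, apply the generalized Fermat rule, use the exact sum rule for adding the differentiable functions $-C_{\star}g$ and the quadratic term to get $0\in\hat{\partial}f(x^{\star})-C_{\star}\nabla g(x^{\star})+\nabla h(x^{\star})$, and conclude via Proposition \ref{Directional stationary 1}. One small quibble: in your final parenthetical the one-sided inclusion you name points the wrong way (to pass from $0\in\hat{\partial}$ of the prox-objective to the desired inclusion you need the prox-objective's subdifferential to be contained in $\hat{\partial}f(x^{\star})-C_{\star}\nabla g(x^{\star})+\nabla h(x^{\star})$, not the reverse), but since your argument actually invokes the equality this slip is harmless.
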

\begin{proof}
By the definition of proximity operators, (\ref{fixed-point-directional-stationary}) is equivalent to 
\begin{equation}\label{fixed-point-directional-stationary-1}
x^{\star}\in{\rm arg\,min}\{\alpha(f(x)-C_{\star}g(x))+\frac{1}{2}\|x-x^{\star}+\alpha\nabla h(x^{\star})\|_{2}^{2}:x\in\mathbb{R}^{n}\}.
\end{equation}
From the generalized Fermat's rule, (\ref{fixed-point-directional-stationary-1}) leads to
$
0\in\alpha\hat{\partial}f(x^{\star}) - \alpha C_{\star}\nabla g(x^{\star}) + \alpha \nabla h(x^{\star}),
$
which implies that $x^{\star}$ is a stationary point of $F$ by proposition \ref{Directional stationary 1}.
\end{proof}

\subsection{Subsequential convergence of PPGA}\label{5_2}
We prove in this subsection that the sequence $\{x^{k}:k\in\mathbb{N}\}$ generated by PPGA is bounded and any accumulation point of it is a stationary point of $F$. To this end, we first illustrate in the next theorem that $x^{k}\in{\rm dom}(F)$ for $k\in\mathbb{N}$ and $\{F(x^{k}):k\in\mathbb{N}\}$ is decreasing and convergent.
\begin{theorem}\label{convergence_al3}
The sequence $\{x^{k}:k\in\mathbb{N}\}$ generated by PPGA falls into ${\rm dom}(F)$ and the following statements holds:\\
(i)$F(x^{k+1})+\frac{1/\alpha_{k}-L}{2g(x^{k+1})}\|x^{k+1}-x^{k}\|_{2}^{2}\le F(x^{k}) \quad {\rm for}\ k\in\mathbb{N}$;\\
(ii)$\mathop{\rm lim}\limits_{k\rightarrow \infty}C_{k} = \mathop{\rm lim}\limits_{k\rightarrow \infty}F(x^{k}) = C\quad {\rm with}\ C\ge 0$;\\
(iii)$\mathop{\rm lim}\limits_{k\rightarrow \infty}\frac{1/\alpha_{k}-L}{g(x^{k+1})}\|x^{k+1}-x^{k}\|_{2}^{2} = 0$.
\end{theorem}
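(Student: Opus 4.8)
The plan is to chain three ingredients: the optimality of the proximal step defining $x^{k+1}$, the descent lemma for $h$, and the defining identity $C_k=F(x^k)$; everything rests on first checking that the iterates never leave $\Omega\cap{\rm dom}(f)$. I would argue by induction that $x^k\in\Omega\cap{\rm dom}(f)$ for all $k$, the base case being the input condition $x^0\in\Omega\cap{\rm dom}(f)$. Given $x^k\in\Omega\cap{\rm dom}(f)$, Assumption 1(ii) gives $g(x^k)>0$ and Assumption 1(iv) gives $f(x^k)+h(x^k)\ge 0$, so $C_k=F(x^k)$ is well defined and nonnegative; Assumption 1(v) guarantees $x^{k+1}$ exists, and $x^{k+1}\in{\rm dom}(f)$ since $f$ is proper. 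That $g(x^{k+1})\ne 0$ will fall out of the one-step inequality derived next: if $g(x^{k+1})=0$, that inequality collapses to $f(x^{k+1})+h(x^{k+1})+\tfrac{1/\alpha_k-L}{2}\|x^{k+1}-x^k\|_2^2\le 0$ with all three summands nonnegative, forcing $x^{k+1}=x^k$ and hence $g(x^{k+1})=g(x^k)\ne 0$, a contradiction; so $x^{k+1}\in\Omega\cap{\rm dom}(f)$ and $g(x^{k+1})>0$.

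For the one-step inequality (which gives Item (i)), since $x^{k+1}\in{\rm prox}_{\alpha_k(f-C_kg)}(x^k-\alpha_k\nabla h(x^k))$, I would compare the proximal objective at $x^{k+1}$ with its value at the feasible point $x^k$ and expand the squared norms, cancelling the common term $\tfrac{\alpha_k^2}{2}\|\nabla h(x^k)\|_2^2$, to obtain
\[
\alpha_k\big(f(x^{k+1})-C_kg(x^{k+1})\big)+\tfrac12\|x^{k+1}-x^k\|_2^2+\alpha_k\langle\nabla h(x^k),x^{k+1}-x^k\rangle\le\alpha_k\big(f(x^k)-C_kg(x^k)\big).
\]
Dividing by $\alpha_k>0$, using the descent lemma (Assumption 1(iii)) in the form $\langle\nabla h(x^k),x^{k+1}-x^k\rangle\ge h(x^{k+1})-h(x^k)-\tfrac{L}{2}\|x^{k+1}-x^k\|_2^2$, and invoking $f(x^k)+h(x^k)=C_kg(x^k)$ to annihilate the right-hand side, I get
\[
f(x^{k+1})+h(x^{k+1})-C_kg(x^{k+1})+\tfrac{1/\alpha_k-L}{2}\|x^{k+1}-x^k\|_2^2\le 0,
\]
where $1/\alpha_k-L>0$ because $\alpha_k\le\overline{\alpha}<1/L$. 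This is precisely the inequality used in the induction step; with $g(x^{k+1})>0$ now available, dividing through by $g(x^{k+1})$ and recalling $C_k=F(x^k)$ yields Item (i).

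Items (ii) and (iii) then follow by routine arguments. By Item (i) the sequence $\{F(x^k):k\in\mathbb{N}\}$ is nonincreasing, and it is bounded below by $0$ since $f+h\ge 0$ and $g>0$ on $\Omega\cap{\rm dom}(f)$; hence it converges to some $C\ge 0$, and since $C_k=F(x^k)$ the limit of $\{C_k\}$ is the same, giving Item (ii). Summing the inequality in Item (i) over $k=0,\dots,N$ telescopes the $F$-terms, so $\sum_{k=0}^{N}\tfrac{1/\alpha_k-L}{2g(x^{k+1})}\|x^{k+1}-x^k\|_2^2\le F(x^0)-F(x^{N+1})\le F(x^0)-C<\infty$; the series therefore converges and its general term tends to $0$, which is Item (iii).

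I expect the only genuinely delicate point to be the invariance claim — keeping $g$ bounded away from zero along the iteration — because $F\equiv+\infty$ off $\Omega\cap{\rm dom}(f)$ while the proximal subproblem only directly controls $f-C_kg$; it is exactly the nonnegativity of $f+h$ (Assumption 1(iv)) together with the strict inequality $\alpha_k<1/L$ that rescues it. The rest is bookkeeping with the descent lemma and a telescoping sum.
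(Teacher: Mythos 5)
Your proposal is correct and follows essentially the same route as the paper's proof: induction on $x^k\in\Omega\cap{\rm dom}(f)$, the prox-optimality comparison at $x^k$ combined with the descent lemma and $f(x^k)+h(x^k)=C_kg(x^k)$, the same contradiction argument (if $g(x^{k+1})=0$ the inequality forces $x^{k+1}=x^k$), then division by $g(x^{k+1})$ and monotone convergence for Items (i)–(ii). Your telescoping-sum derivation of Item (iii) is a minor (in fact slightly stronger, summability) variant of the paper's direct observation that the gap $F(x^k)-F(x^{k+1})\to 0$.
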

\begin{proof}
We first prove $x^{k}\in{\rm dom}(F)$ for $k\in\mathbb{N}$ by induction. First, the initial point $x^{0}\in{\rm dom}(F)$. Suppose $x^{k}\in{\rm dom}(F)$ for some $k\in\mathbb{N}$. From PPGA and the definition of proximity operators, we have 
$$
\begin{aligned}
f(x^{k+1}) &- C_{k}g(x^{k+1}) + \frac{1}{2\alpha_{k}}\|x^{k+1}-x^{k}+\alpha_{k}\nabla h(x^{k})\|_{2}^{2}\\
&\le f(x^{k}) - C_{k}g(x^{k}) +\frac{1}{2\alpha_{k}}\|\alpha_{k}\nabla h(x^{k})\|_{2}^{2},
\end{aligned}
$$
which together with $C_{k} = \frac{f(x^{k})+h(x^{k})}{g(x^{k})}$ implies that
\begin{equation}\label{convergence_al3_1}
f(x^{k+1}) - C_{k}g(x^{k+1}) + \frac{1}{2\alpha_{k}}\|x^{k+1}-x^{k}\|_{2}^{2} + \langle x^{k+1} - x^{k},\nabla h(x^{k})\rangle\le -h(x^{k}).
\end{equation}
From Item (iii) of Assumption 1, there holds
\begin{equation}\label{convergence_al3_2}
h(x^{k+1})\le h(x^{k}) + \langle\nabla h(x^{k}), x^{k+1}-x^{k}\rangle + \frac{L}{2}\|x^{k+1}-x^{k}\|_{2}^{2}.
\end{equation}
Summing (\ref{convergence_al3_1}) and (\ref{convergence_al3_2}) leads to 
\begin{equation}\label{convergence_al3_3}
f(x^{k+1}) + h(x^{k+1}) + \frac{1/\alpha_{k}-L}{2}\|x^{k+1} - x^{k}\|_{2}^{2} \le C_{k}g(x^{k+1}).
\end{equation}
Assume $x^{k+1}\notin {\rm dom}(F)$. We know that $x^{k+1}\notin \Omega$ and $g(x^{k+1}) = 0$ due to $x^{k+1}\in {\rm dom}(f-C_{k}g) = {\rm dom}(f)$ and ${\rm dom}(F) = {\rm dom}(f) \cap \Omega$. By Item (iv) of Assumption 1 and $0<\alpha_{k}<1/L$, we deduce that $x^{k+1} = x^{k}$ from (\ref{convergence_al3_3}). This contradicts to $x^{k}\in{\rm dom}(F)$ and thus implies $x^{k+1}\in{\rm dom}(F)$. Therefore, we conclude that $x^{k}\in{\rm dom}(F)$ for all $k\in\mathbb{N}$.

By dividing $g(x^{k+1})$ at both sides of (\ref{convergence_al3_3}), we obtain Item (i) immediately. Item (ii) follows immediately by $F\ge 0$ and $0<\alpha_{k}<1/L$. Item (iii) is a direct consequence of Items (i) and (ii). We complete the proof.
\end{proof}

We are now ready to present the main result of this subsection.
\begin{theorem}\label{eeee}
Let $\{x^{k}:k\in\mathbb{N}\}$ be generated by PPGA. Then $\{x^{k}:\mathbb{N}\}$ is bounded and any accumulation point of it is a stationary point of $F$.
\end{theorem}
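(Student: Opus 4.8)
The plan is to feed the monotonicity and summability output of Theorem~\ref{convergence_al3} into the sufficient stationarity condition of Proposition~\ref{sufficient_condition_directional_stationary}. For boundedness, note that by Theorem~\ref{convergence_al3}(i) the values $\{F(x^{k})\}$ are nonincreasing, so every iterate stays in the sublevel set $\{x\in\mathbb{R}^{n}:F(x)\le F(x^{0})\}$. By Assumption~1(vi), $F$ is lower semicontinuous and level bounded, hence this set is compact; therefore $\{x^{k}\}$ is bounded, and any accumulation point $x^{\star}$ also lies in it, which gives $x^{\star}\in{\rm dom}(F)={\rm dom}(f)\cap\Omega$ and in particular $g(x^{\star})>0$. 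This membership is exactly what makes $F$, $\nabla g$, $\nabla h$ and the proximity operator $ {\rm prox}_{\alpha(f-F(x^{\star})g)}$ meaningful at $x^{\star}$.

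Next I would show the iterate differences vanish. Since $\alpha_{k}\in[\underline{\alpha},\overline{\alpha}]\subset(0,1/L)$ we have $1/\alpha_{k}-L\ge 1/\overline{\alpha}-L>0$, and since $\{x^{k}\}$ is bounded and $g$ is continuous, $\{g(x^{k+1})\}$ is bounded above; substituting into Theorem~\ref{convergence_al3}(iii) forces $\|x^{k+1}-x^{k}\|_{2}\to 0$. Now fix an accumulation point $x^{\star}$ with $x^{k_{j}}\to x^{\star}$; then $x^{k_{j}+1}\to x^{\star}$ as well, and passing to a further subsequence we may assume $\alpha_{k_{j}}\to\alpha^{\star}\in[\underline{\alpha},\overline{\alpha}]\subset(0,1/L)$. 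Because $f$, $g$, $h$ are continuous relative to ${\rm dom}(F)$ (Assumption~1(i)--(iii)) and $g(x^{\star})>0$, we get $C_{k_{j}}=F(x^{k_{j}})\to F(x^{\star})=:C_{\star}$ and $\nabla h(x^{k_{j}})\to\nabla h(x^{\star})$; in passing this identifies the limit $C$ of Theorem~\ref{convergence_al3}(ii) as $F(x^{\star})$.

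Finally I would pass to the limit in the variational inequality characterizing $x^{k_{j}+1}$ as a minimizer of $y\mapsto\alpha_{k_{j}}(f-C_{k_{j}}g)(y)+\frac{1}{2}\|y-x^{k_{j}}+\alpha_{k_{j}}\nabla h(x^{k_{j}})\|_{2}^{2}$, i.e., for every $y\in\mathbb{R}^{n}$,
\[
f(x^{k_{j}+1})-C_{k_{j}}g(x^{k_{j}+1})+\frac{1}{2\alpha_{k_{j}}}\bigl\|x^{k_{j}+1}-x^{k_{j}}+\alpha_{k_{j}}\nabla h(x^{k_{j}})\bigr\|_{2}^{2}\le f(y)-C_{k_{j}}g(y)+\frac{1}{2\alpha_{k_{j}}}\bigl\|y-x^{k_{j}}+\alpha_{k_{j}}\nabla h(x^{k_{j}})\bigr\|_{2}^{2}.
\]
Taking $\liminf_{j\to\infty}$ on the left (where only lower semicontinuity of $f$ is available, while the remaining terms genuinely converge) and $\lim_{j\to\infty}$ on the right for each fixed $y$ yields $x^{\star}\in{\rm prox}_{\alpha^{\star}(f-C_{\star}g)}(x^{\star}-\alpha^{\star}\nabla h(x^{\star}))$, so $x^{\star}$ is a stationary point of $F$ by Proposition~\ref{sufficient_condition_directional_stationary}. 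I expect this limit passage to be the main obstacle: $f(x^{k_{j}+1})$ cannot be handled by continuity, so the $\liminf$ must be taken on one side only, and the whole argument rests on having first secured both $\|x^{k+1}-x^{k}\|_{2}\to 0$ (so that $x^{k_{j}}$ and $x^{k_{j}+1}$ share the same limit) and $C_{k_{j}}\to F(x^{\star})$ (which in turn requires $x^{\star}\in{\rm dom}(F)$ and continuity of $f$ relative to ${\rm dom}(f)\cap\Omega$).
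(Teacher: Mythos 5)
Your proof is correct and follows essentially the same route as the paper: boundedness via level-boundedness plus the monotone decrease of Theorem~\ref{convergence_al3}, vanishing successive differences, then passing to the limit in the proximal characterization and invoking Proposition~\ref{sufficient_condition_directional_stationary}. The only cosmetic differences are that you shift the prox step forward (from $x^{k_j}$ to $x^{k_j+1}$) where the paper shifts backward, and you spell out the limit passage via the minimizing inequality with a $\liminf$, which the paper compresses into ``passing to the limit'' using continuity of $f,g,h,\nabla h$ on ${\rm dom}(F)$.
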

\begin{proof}
The boundedness of $\{x^{k}:k\in\mathbb{N}\}$ follows from the level boundedness of $F$ and Theorem \ref{convergence_al3} (i). Let $\{x^{k_{j}}:j\in\mathbb{N}\}$ be a subsequence such that $\mathop{\rm lim}\limits_{j\rightarrow \infty}x^{k_{j}} = x^{\star}$. From Theorem \ref{convergence_al3} (i) and the lower semicontinuity of $F$, we deduce that $F(x^{\star})\le \mathop{\rm lim}\limits_{j\rightarrow \infty}F(x^{k_{j}})\le F(x^{0})$, which implies that $x^{\star}\in{\rm dom}(F)$. Thus, $g(x^{\star})\neq 0$ and $x^{\star}\in{\rm dom}(f)$. By Theorem \ref{convergence_al3} and $\alpha_{k}\le \overline{\alpha}$, we obtain 
$
F(x^{k_{j}}) + \frac{1/\overline{\alpha}-L}{2g(x^{k_{j}})}\|x^{k_{j}} - x^{k_{j}-1}\|_{2}^{2} \le F(x^{k_{j}-1}).
$
Using Theorem \ref{convergence_al3} (ii), $\overline{\alpha}<1/L$ and the continuity of $g$ at $x^{\star}$, we deduce that $\mathop{\rm lim}\limits_{j\rightarrow \infty}\|x^{k_{j}} - x^{k_{j}-1}\|_{2} = 0$ and $\mathop{\rm lim}\limits_{j\rightarrow \infty}x^{k_{j}-1} = x^{\star}$. Without loss of generality, we assume $\mathop{\rm lim}\limits_{j\rightarrow \infty}\alpha_{k_{j}-1} = \alpha$ with $0<\underline{\alpha}\le \alpha \le \overline{\alpha}$. From PPGA, we get
$$
x^{k_{j}}\in {\rm prox}_{\alpha_{k_{j}-1}(f-C_{k_{j}-1}g)}(x^{k_{j}-1}-\alpha_{k_{j}-1}\nabla h(x^{k_{j}-1})).
$$
As $f, g, h$ and $\nabla h$ are continuous on ${\rm dom}(F)$, we obtain (\ref{fixed-point-directional-stationary}) by passing to the limit in the above relation. Finally, by Proposition \ref{sufficient_condition_directional_stationary} we have that $x^{\star}$ is a stationary point of $F$.
\end{proof}
\subsection{Global sequential convergence of PPGA}\label{5_3}
In this subsection, we investigate the global convergence of the entire sequence $\{x^{k}:k\in\mathbb{N}\}$ generated by PPGA. We shall show $\{x^{k}:k\in\mathbb{N}\}$ converges to a stationary point of $F$ under the KL property assumption. Our analysis in this subsection mainly takes advantage of Proposition \ref{KL_property} which is based on KL property. If $F$ satisfies the KL property, from Proposition \ref{KL_property} and Theorem \ref{eeee}, we can establish the global convergence of PPGA by showing the boundedness of the generated sequence and Items (i)-(iii) of Proposition \ref{KL_property}. The boundedness of $\{x^{k}:k\in\mathbb{N}\}$ is obtained in Theorem \ref{eeee} and Item (iii) of proposition \ref{KL_property} is a direct consequence of the continuity of $F$. We next prove Items (i)-(ii) in the following lemma. 
\begin{lemma}\label{L5_5}
Let $\{x^{k}:k\in\mathbb{N}\}$ be generated by PPGA. Then the following statements hold:\\
(i)There exists $a>0$ such that
$
F(x^{k+1}) + \frac{a}{2}\|x^{k+1}-x^{k}\|_{2}^{2}\le F(x^{k})
$ 
for all $k\in\mathbb{N}$;\\
(ii)There exist $b>0$ and $w^{k+1}\in\partial F(x^{k+1})$ such that
$
\|w^{k+1}\|_{2}\le b\|x^{k+1}-x^{k}\|_{2}
$
for all $k\in\mathbb{N}$.
\end{lemma}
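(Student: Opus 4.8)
The plan is to verify (i) and (ii) --- which are exactly the sufficient‑decrease and relative‑error hypotheses needed to invoke Proposition~\ref{KL_property} --- by combining the descent estimate already proved in Theorem~\ref{convergence_al3}, the optimality (generalized Fermat) condition for the proximal subproblem that defines $x^{k+1}$ in PPGA, and uniform constants extracted from the boundedness of $\{x^{k}:k\in\mathbb{N}\}$ established in Theorem~\ref{eeee}.

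\textbf{Part (i).} I would start from Theorem~\ref{convergence_al3}(i),
\[
F(x^{k+1})+\frac{1/\alpha_{k}-L}{2g(x^{k+1})}\|x^{k+1}-x^{k}\|_{2}^{2}\le F(x^{k}),
\]
so it only remains to bound the coefficient $\frac{1/\alpha_{k}-L}{g(x^{k+1})}$ below by a positive constant independent of $k$. Since $\underline{\alpha}\le\alpha_{k}\le\overline{\alpha}<1/L$, we have $1/\alpha_{k}-L\ge 1/\overline{\alpha}-L>0$. On the other hand, $\{x^{k}:k\in\mathbb{N}\}$ is bounded by Theorem~\ref{eeee}, and since every $x^{k}$ as well as every accumulation point of the sequence lies in ${\rm dom}(F)$, the closure $K$ of $\{x^{k}:k\in\mathbb{N}\}$ is a compact subset of ${\rm dom}(F)$ on which $g$ is continuous and strictly positive; hence $0<\min_{K}g\le g(x^{k+1})\le\max_{K}g<+\infty$ for all $k$. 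Taking $a:=(1/\overline{\alpha}-L)/\max_{K}g>0$ then yields (i).

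\textbf{Part (ii).} By definition of the proximity operator, $x^{k+1}$ solves
\[
\min\Big\{\alpha_{k}\big(f(x)-C_{k}g(x)\big)+\tfrac12\big\|x-x^{k}+\alpha_{k}\nabla h(x^{k})\big\|_{2}^{2}:x\in\mathbb{R}^{n}\Big\}.
\]
Because $g$ is continuously differentiable near $x^{k+1}\in{\rm dom}(F)$ and $h$ is differentiable, the generalized Fermat rule together with the elementary subdifferential calculus recalled in Section~\ref{S2} gives
\[
\xi^{k+1}:=\frac{x^{k}-x^{k+1}}{\alpha_{k}}-\nabla h(x^{k})+C_{k}\nabla g(x^{k+1})\in\hat{\partial}f(x^{k+1}).
\]
Applying Proposition~\ref{P2.1} to $F=(f+h)/g$ at $x^{k+1}$ (with $a_{1}=f(x^{k+1})+h(x^{k+1})\ge0$, $a_{2}=g(x^{k+1})>0$, and using $\hat{\partial}(f+h)=\hat{\partial}f+\nabla h$), the vector
\[
w^{k+1}:=\frac{\xi^{k+1}+\nabla h(x^{k+1})-C_{k+1}\nabla g(x^{k+1})}{g(x^{k+1})}
\]
belongs to $\hat{\partial}F(x^{k+1})\subseteq\partial F(x^{k+1})$, where $C_{k+1}=F(x^{k+1})$. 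A direct computation gives
\[
g(x^{k+1})\,w^{k+1}=\frac{x^{k}-x^{k+1}}{\alpha_{k}}+\big(\nabla h(x^{k+1})-\nabla h(x^{k})\big)+\big(C_{k}-C_{k+1}\big)\nabla g(x^{k+1}),
\]
and I would bound the three terms by $\tfrac{1}{\underline{\alpha}}\|x^{k+1}-x^{k}\|_{2}$, by $L\|x^{k+1}-x^{k}\|_{2}$ (Lipschitz gradient of $h$), and by $M_{g}L_{F}\|x^{k+1}-x^{k}\|_{2}$, respectively, where $M_{g}:=\max_{K}\|\nabla g\|_{2}$ and $L_{F}$ is a Lipschitz constant of $F$ on the compact set $K$. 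Dividing by $g(x^{k+1})\ge\min_{K}g>0$ then gives (ii) with $b:=(1/\underline{\alpha}+L+M_{g}L_{F})/\min_{K}g$.

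\textbf{Main obstacle.} The descent step and the Fermat manipulation are routine; the delicate point is the uniform bookkeeping on $K$. One must argue that $K\subseteq{\rm dom}(F)$ (so that $g$ is continuous, positive and $C^{1}$ there), that $g$ and $\nabla g$ are bounded on $K$ with $g$ bounded away from zero, and --- most importantly --- that $F$, equivalently the parameter sequence $\{C_{k}:k\in\mathbb{N}\}$, is Lipschitz on $K$, so that the mismatch $|C_{k}-C_{k+1}|$ between the parameter used to form $x^{k+1}$ and the objective value attained at $x^{k+1}$ is controlled by $\|x^{k+1}-x^{k}\|_{2}$. This last estimate --- which ultimately rests on $f$ being locally Lipschitz on ${\rm dom}(f)\cap\Omega$ and $g$ being bounded away from zero along the iterates --- is precisely what converts the parameterized proximal‑gradient scheme into one meeting the relative‑error requirement of Proposition~\ref{KL_property}.
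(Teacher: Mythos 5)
Your proposal is correct and follows essentially the same route as the paper: part (i) is Theorem \ref{convergence_al3}(i) combined with a uniform upper bound on $g$ along the iterates (the paper takes $M=\sup\{g(x):F(x)\le F(x^{0})\}$ rather than the maximum over the closure of the sequence, a cosmetic difference), and part (ii) uses the generalized Fermat rule for the prox subproblem, the quotient formula of Proposition \ref{P2.1} to build exactly the same $w^{k+1}$, and the same three-term estimate with $g$ bounded away from zero, $\nabla g$ bounded, and $F$ (hence $|C_{k}-C_{k+1}|$) Lipschitz on the closure of the bounded iterate set. The "main obstacle" you flag is handled in the paper at the same level of detail (it simply asserts the Lipschitzness of $F$ on that compact set from Assumption 1(i)--(iii)), so no gap relative to the published argument.
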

\begin{proof}
We first prove Item (i). Let $a:=(1/\overline{\alpha}-L)/M$ with $M:={\rm sup}\{g(x):F(x)\le F(x^{0})\}$. Since $g$ is continuous as well as $F$ is level bounded and lower semicontinuous, $M$ is finite. Thus $a>0$. This together with Theorem \ref{convergence_al3} (i) and $\alpha_{k}\le\overline{\alpha}$ yields Item (i).

We next prove Item (ii). By Theorem \ref{convergence_al3} and Theorem \ref{eeee}, we know that $x^{k}\in\Omega$ for $k\in\mathbb{N}$ and any accumulation point $x^{\star}$ of $\{x^{k}:k\in\mathbb{N}\}$ satisfies $g(x^{\star})>0$. Therefore, there exists $t>0$ such that $g(x^{k})\ge t$ for $k\in\mathbb{N}$, since $\{x^{k}:k\in\mathbb{N}\}$ is bounded and $g$ is continuous on $\Omega$. Let $S$ be the closure of $\{x^{k}:k\in\mathbb{N}\}$. By Theorem \ref{eeee}, $S$ is bounded and $S\subseteq {\rm dom}(F)$. Then, it can be easily checked that $F$ are globally Lipschitz continuous on $S$, with the help of Assumption 1 (i)-(iii). We denote the Lipschitz constant of $F$ by $\widetilde{L}$.

From PPGA,  the definition of proximity operators and the generalized Fermat's rule, we obtain that 
$$
x^{k}-x^{k+1}-\alpha_{k}\nabla h(x^{k})\in \alpha_{k}\hat{\partial}f(x^{k+1})-\alpha_{k}C_{k}\nabla g(x^{k+1}),
$$
which implies that
$$
\frac{x^{k}-x^{k+1}}{\alpha_{k}g(x^{k+1})} - \frac{\nabla h(x^{k})}{g(x^{k+1})} + \frac{C_{k}\nabla g(x^{k+1})}{g(x^{k+1})} \in \frac{\hat{\partial}f(x^{k+1})}{g(x^{k+1})}.
$$
Since $\hat{\partial}F = \frac{g(\hat{\partial}f + \nabla h)-(f+h)\nabla g}{g^{2}}$ on ${\rm dom}(F)$ by Proposition \ref{P2.1}, we have $w^{k+1}\in\hat{\partial}F(x^{k+1})$ with 
$$
w^{k+1}:=\frac{x^{k}-x^{k+1}}{\alpha_{k}g(x^{k+1})}+\frac{\nabla h(x^{k+1})-\nabla h(x^{k})}{g(x^{k+1})} + \frac{(C_{k}-C_{k+1})\nabla g(x^{k+1})}{g(x^{k+1})}.
$$
A direct computation yields that
$$
\|w^{k+1}\|_{2} \le (\frac{1}{\alpha_{k}t}+\frac{L}{t}+\frac{\widetilde{L}\|\nabla g(x^{k+1})\|_{2}}{t})\|x^{k+1}-x^{k}\|_{2}.
$$
Since $\{x^{k}:k\in\mathbb{N}\}$ is bounded and $\nabla g$ is continuous on $\Omega$, there exist $\beta>0$ such that $\|\nabla g(x^{k+1})\|_{2}\le \beta$ for $k\in\mathbb{N}$. Due to $\alpha_{k}\ge \underline{\alpha}>0$, we finally obtain that $\|w^{k+1}\|_{2}\le b\|x^{k+1}-x^{k}\|_{2}$ for $k\in\mathbb{N}$, where $b:=(1/\underline{\alpha}+L+\tilde{L}\beta)/t$. We complete the proof from $\hat{\partial}F(x^{k+1})\subseteq\partial F(x^{k+1})$.
\end{proof}

The main result of this subsection is established in the following theorem.
\begin{theorem}\label{converge_PPGA}
Let $\{x^{k}:k\in\mathbb{N}\}$ be generated by PPGA. If $F$ satisfies the KL property at any point in ${\rm dom}(F)$, then $\sum_{k=1}^{\infty}\|x^{k}-x^{k-1}\|_{2}< +\infty$ and $\{x^{k}:k\in\mathbb{N}\}$ converges to a stationary point of $F$.
\end{theorem}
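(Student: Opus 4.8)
The plan is to invoke the abstract convergence framework of Proposition~\ref{KL_property} with the objective $F$ and the sequence $\{x^k:k\in\mathbb{N}\}$ generated by PPGA. Since that proposition asks for a sufficient decrease condition, a relative error condition, a continuity/subsequence condition, and the KL property at the relevant limit point, the proof reduces to verifying that all four ingredients are already available from the results established earlier in this section.

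First I would observe that conditions~(i) and~(ii) of Proposition~\ref{KL_property} are exactly the two statements of Lemma~\ref{L5_5}: the constant $a>0$ with $F(x^{k+1})+\tfrac{a}{2}\|x^{k+1}-x^k\|_2^2\le F(x^k)$, and the subgradient $w^{k+1}\in\partial F(x^{k+1})$ with $\|w^{k+1}\|_2\le b\|x^{k+1}-x^k\|_2$, are produced there. Next I would handle condition~(iii). By Theorem~\ref{eeee} the sequence $\{x^k\}$ is bounded, hence it has a convergent subsequence $x^{k_j}\to x^\star$; moreover $x^\star\in{\rm dom}(F)$ and, being an accumulation point, $x^\star$ is a stationary point of $F$. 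It remains to upgrade this to $F(x^{k_j})\to F(x^\star)$: this holds because $F$ is continuous (indeed globally Lipschitz) on the closure $S$ of $\{x^k:k\in\mathbb{N}\}$, a fact already used in the proof of Lemma~\ref{L5_5}, which relied on Assumption~1(i)--(iii) together with $g$ being bounded below by a positive constant on $S$ (all accumulation points lie in ${\rm dom}(F)$, so $g>0$ there). Finally, the hypothesis of the theorem supplies the KL property of $F$ at the point $x^\star\in{\rm dom}(F)$.

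With all four ingredients in place, Proposition~\ref{KL_property} yields directly $\sum_{k=1}^{\infty}\|x^k-x^{k-1}\|_2<+\infty$ and $\lim_{k\to\infty}x^k=x^\star$. Since the whole sequence now converges to $x^\star$, and $x^\star$ is an accumulation point, Theorem~\ref{eeee} gives that $x^\star$ is a stationary point of $F$, which completes the proof.

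As for the main obstacle: there is no genuinely deep analytic step --- the argument is essentially an assembly of Theorems~\ref{convergence_al3} and~\ref{eeee}, Lemma~\ref{L5_5}, and Proposition~\ref{KL_property}. The one point requiring care is the continuity condition~(iii): lower semicontinuity of $F$ gives only $F(x^\star)\le\liminf_j F(x^{k_j})$, and to obtain the equality $F(x^{k_j})\to F(x^\star)$ one must genuinely use that $F$ is continuous on the closure of the orbit, which in turn hinges on $g$ staying uniformly away from zero along the bounded sequence; keeping track of this is the only subtlety in the otherwise routine verification.
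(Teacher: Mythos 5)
Your proposal is correct and follows essentially the same route as the paper: the paper's proof likewise assembles Lemma~\ref{L5_5} (conditions (i)--(ii)), Theorem~\ref{eeee} (boundedness and stationarity of accumulation points), and Proposition~\ref{KL_property}, with the continuity condition (iii) attributed to the continuity of $F$ along the bounded sequence. Your extra care in noting that lower semicontinuity alone is insufficient and that $g$ stays uniformly positive on the orbit's closure simply makes explicit what the paper leaves implicit.
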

\begin{proof}
From Theorem \ref{eeee}, it suffices to show that $\sum_{k=1}^{\infty}\|x^{k}-x^{k-1}\|_{2}\le+\infty$ and $\{x^{k}:k\in\mathbb{N}\}$ is convergent. According to Proposition \ref{KL_property}, we obtain this theorem immediately by utilizing Theorem \ref{eeee} and Lemma \ref{L5_5}.
\end{proof}
\subsection{Convergence analysis of PPGA$\_$L}\label{5_4}
This subsection is devoted to the convergence of PPGA$\_$L. We show that any accumulation point of $\{x^{k}:k\in \mathbb{N}\}$ generated by PPGA$\_$L is a stationary point of $F$. Furthermore, the global convergence of the entire sequence generated by PPGA$\_$ML is established under the KL property assumption. Because the analysis in this subsection is similar with that of \cite{Lu2019Nonmonotone, Wright2009Sparse,Na2020First}, we only present the main result in the next theorem and omit its proof due to the limitation of space.
\begin{theorem}\label{converge_PPGAL}
The following statements hold:\\
(i)Step 2 of PPGA$\_$L terminates at some $\alpha_{k}\ge \widetilde{\alpha}$ in most $T$ iterations, where
$$
\widetilde{\alpha}:=\eta/(aM+L),\quad M:={\rm sup}\{g(x):F(x)\le C_{0}\},
$$
$$
T:=\lceil\frac{-{\rm log}(\widetilde{\alpha}(aM+L))}{{\rm log}\eta} + 1\rceil;
$$
(ii)Let $\{x^{k}:k\in\mathbb{N}\}$ be generated by PPGA$\_$L. Then $\{x^{k}:k\in\mathbb{N}\}$ is bounded and any accumulation point of it is a stationary point of $F$;\\
(iii)Let $\{x^{k}:k\in\mathbb{N}\}$ be generated by PPGA$\_$ML, i.e., PPGA$\_$L when $N=0$. Then, $\sum_{k = 1}^{\infty}\|x^{k}-x^{k-1}\|_{2}<+\infty$ and $\{x^{k}:k\in\mathbb{N}\}$ converges to a stationary point of $F$, if $F$ satisfies the KL property at any point in ${\rm dom}(F)$.
\end{theorem}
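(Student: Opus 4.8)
The plan is to carry over, for PPGA\_L, the three-stage template already used for PPGA in Subsections \ref{5_2}--\ref{5_3}, adjusting it to the adaptive step size and to the (possibly nonmonotone) acceptance test (c). For Item (i) I would first prove by induction on $k$ that $x^k\in{\rm dom}(F)$ and that the running maximum $\max_{[k-N]_+\le j\le k}C_j$ is non-increasing in $k$; since $F\ge0$, this gives $F(x^k)=C_k\le C_0$ for every $k$, and, together with the level boundedness of $F$ and the continuity and positivity of $g$ on ${\rm dom}(F)$ (Assumption 1(ii)), it yields a uniform bound $M:=\sup\{g(x):F(x)\le C_0\}<+\infty$ and a uniform lower bound $t>0$ with $g(x^k)\ge t$. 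Fixing $k$, for any trial step size $\alpha_k\in(0,1/L)$ I would reproduce the inequalities (\ref{convergence_al3_1})--(\ref{convergence_al3_3}) from the proof of Theorem \ref{convergence_al3} --- which use only the prox step of Algorithm \ref{Algorithm2}, the identity $C_k=(f(x^k)+h(x^k))/g(x^k)$ and the descent lemma for $h$ --- to obtain at once $\widetilde x^{k+1}\in\Omega\cap{\rm dom}(f)$ and
$$
F(\widetilde x^{k+1})+\frac{1/\alpha_k-L}{2g(\widetilde x^{k+1})}\|\widetilde x^{k+1}-x^k\|_2^2\le C_k\le\max_{[k-N]_+\le j\le k}C_j.
$$
In particular $F(\widetilde x^{k+1})\le C_0$, hence $g(\widetilde x^{k+1})\le M$, so the test (c) holds whenever $(1/\alpha_k-L)/M\ge a$, i.e. $\alpha_k\le 1/(aM+L)$; since backtracking scales $\alpha_k$ by $\eta\in(0,1)$, this threshold is reached after finitely many (at most $T$) reductions and the accepted step size satisfies $\alpha_k\ge\widetilde\alpha=\eta/(aM+L)$.

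For Item (ii), boundedness of $\{x^k\}$ follows from $F(x^k)\le C_0$ and level boundedness of $F$. To prove that accumulation points are stationary I would first establish $\lim_{k\to\infty}\|x^{k+1}-x^k\|_2=0$ by the classical nonmonotone-descent argument of \cite{Lu2019Nonmonotone,Wright2009Sparse}: letting $\ell(k)$ be an index between $[k-N]_+$ and $k$ at which $C_j$ attains its maximum, (c) gives $\frac{a}{2}\|x^k-x^{k-1}\|_2^2\le C_{\ell(k-1)}-C_k$, the sequence $\{C_{\ell(k)}\}$ is non-increasing and bounded below, hence convergent, and a finite induction on $j=0,1,\dots,N$, using that $F$ is Lipschitz on the closure $S$ of $\{x^k\}$ (Assumption 1(i)--(iii) and $g\ge t$ on $S$), propagates $\|x^{\ell(k)-j}-x^{\ell(k)-j-1}\|_2\to0$ to the whole sequence. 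Then, for any $x^{k_j}\to x^\star$, lower semicontinuity of $F$ gives $x^\star\in{\rm dom}(F)$, whence $x^{k_j-1}\to x^\star$ and $C_{k_j-1}=F(x^{k_j-1})\to F(x^\star)=C_\star$ by continuity of $F$ on $S$; after passing to a further subsequence with $\alpha_{k_j-1}\to\alpha\in[\widetilde\alpha,\overline\alpha]$, I would let $j\to\infty$ in $x^{k_j}\in{\rm prox}_{\alpha_{k_j-1}(f-C_{k_j-1}g)}(x^{k_j-1}-\alpha_{k_j-1}\nabla h(x^{k_j-1}))$, using continuity of $f,g,h,\nabla h$ on ${\rm dom}(F)$, to obtain (\ref{fixed-point-directional-stationary}) and conclude via Proposition \ref{sufficient_condition_directional_stationary}.

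For Item (iii), when $N=0$ the test (c) reads $F(x^{k+1})+\frac{a}{2}\|x^{k+1}-x^k\|_2^2\le F(x^k)$, which is precisely the sufficient-decrease condition of Proposition \ref{KL_property}; the relative-error condition is obtained exactly as in the proof of Lemma \ref{L5_5}(ii), with $w^{k+1}=\frac{x^k-x^{k+1}}{\alpha_k g(x^{k+1})}+\frac{\nabla h(x^{k+1})-\nabla h(x^k)}{g(x^{k+1})}+\frac{(C_k-C_{k+1})\nabla g(x^{k+1})}{g(x^{k+1})}\in\partial F(x^{k+1})$ and the lower bound $\underline\alpha$ on the step size replaced by the $\widetilde\alpha$ from Item (i); the continuity condition holds because $F$ is continuous on $S$ and $\{x^k\}$ is bounded. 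Proposition \ref{KL_property}, applied at an accumulation point --- which exists by boundedness and is a stationary point of $F$ by Item (ii) --- then gives $\sum_{k\ge1}\|x^k-x^{k-1}\|_2<+\infty$ and convergence of the whole sequence to that stationary point.

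I expect Item (i) to be the main obstacle. The delicate point is that, within the line search at step $k+1$, one must simultaneously guarantee that every trial iterate $\widetilde x^{k+1}$ lies in $\Omega\cap{\rm dom}(f)$ (so that $C_k$, $F(\widetilde x^{k+1})$ and the next prox step are all well defined) and control $g(\widetilde x^{k+1})$ uniformly; in the nonmonotone regime the latter forces the ``running maximum is non-increasing, hence $\le C_0$'' bookkeeping to be in place before the finite-termination bound $\alpha_k\ge\widetilde\alpha$ can be extracted. Once Item (i) holds, Items (ii) and (iii) are essentially the PPGA arguments of Subsections \ref{5_2}--\ref{5_3} run with $\widetilde\alpha$ in place of $\underline\alpha$, together with the standard passage from the nonmonotone to the monotone case needed for $\|x^{k+1}-x^k\|_2\to0$.
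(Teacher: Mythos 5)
The paper itself omits the proof of Theorem \ref{converge_PPGAL}, stating only that the analysis parallels \cite{Lu2019Nonmonotone,Wright2009Sparse,Na2020First}, and your proposal reconstructs exactly that intended argument: the descent inequality of Theorem \ref{convergence_al3} applied to each trial point to get finite termination of the backtracking with a uniform step-size bound, the standard nonmonotone bookkeeping with $\ell(k)$ to force $\|x^{k+1}-x^{k}\|_{2}\to 0$ and subsequential stationarity as in Theorem \ref{eeee}, and for $N=0$ the KL machinery of Lemma \ref{L5_5} and Proposition \ref{KL_property} with $\widetilde{\alpha}$ in place of $\underline{\alpha}$. This is correct and essentially the same approach the paper relies on; the only glosses (the precise constant $T$, the edge case where the first trial step is accepted below $\widetilde{\alpha}$, and the implicit need for $a>0$ in the nonmonotone propagation) mirror imprecisions already present in the theorem statement itself.
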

\subsection{Convergence of PPGA and PPGA$\_$L applied to problem (\ref{l1_l2_minimization_problem_constraint_envelop})}
In this section, we establish the convergence of PPGA and PPGA$\_$L when they are applied to solving problem (\ref{l1_l2_minimization_problem_constraint_envelop}). As discussed in subsection \ref{5_3} and \ref{5_4}, the global sequential convergence of PPGA and PPGA$\_$ML relies on the KL property of the objective. The next lemma presents that the objective function $Q_{\lambda}$ of problem (\ref{l1_l2_minimization_problem_constraint_envelop}) is a semialgebraic function and thus satisfies the KL property. For the definition of the semialgebraic function and its relation to the KL property, we refer readers to \cite{Hedy2013Convergence}.
\begin{lemma}\label{Q_semialgebraic}
For any $\lambda>0$, $Q_{\lambda}$ is a semialgebraic function.
\end{lemma}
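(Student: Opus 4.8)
The plan is to show that $Q_{\lambda}$ is semialgebraic by exhibiting its graph as a semialgebraic subset of $\mathbb{R}^{n}\times\mathbb{R}$, using the fact that semialgebraic sets and functions are closed under the finitely many operations that build $Q_{\lambda}$ up from elementary pieces. Recall that a set is semialgebraic if it is a finite union of sets defined by finitely many polynomial equalities and inequalities, and a function is semialgebraic if its graph is semialgebraic. The strategy is: first verify that each constituent of the formula for $Q_{\lambda}$ is semialgebraic, then close under sums, quotients, compositions, and the piecewise definition.

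\textbf{Key steps.} First I would record the building blocks. The polynomial $x\mapsto\|x\|_{2}^{2}$ is semialgebraic, hence so is $\|\cdot\|_{2}$ (its graph $\{(x,t):t\ge 0,\ t^{2}=\|x\|_{2}^{2}\}$ is cut out by a polynomial equality and inequality); likewise $\|\cdot\|_{1}=\sum_{i}|x_{i}|$ is semialgebraic since each $|x_{i}|$ is, and finite sums of semialgebraic functions are semialgebraic. The linear map $x\mapsto Ax$ is semialgebraic, and $\|Ax-b\|_{2}$ is a composition of semialgebraic maps, hence semialgebraic; then $(\|Ax-b\|_{2}-\epsilon)_{+}$ is semialgebraic because $t\mapsto[t]_{+}=\max\{0,t\}$ is (its graph is the union of two polynomially-defined pieces), and squaring preserves semialgebraicity, so ${\rm env}_{\iota_{C}}^{1}(Ax)=\tfrac12(\|Ax-b\|_{2}-\epsilon)_{+}^{2}$ is semialgebraic. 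Consequently the numerator $\lambda\|x\|_{1}+{\rm env}_{\iota_{C}}^{1}(Ax)$ is semialgebraic, and since $\|x\|_{2}\neq 0$ on ${\rm dom}(Q_{\lambda})$, the quotient $\frac{\lambda\|x\|_{1}+{\rm env}_{\iota_{C}}^{1}(Ax)}{\|x\|_{2}}$ is semialgebraic on $\mathbb{R}^{n}\setminus\{0_{n}\}$ (quotients of semialgebraic functions with nonvanishing denominator are semialgebraic — the graph $\{(x,s):s\,\|x\|_{2}=\lambda\|x\|_{1}+{\rm env}_{\iota_{C}}^{1}(Ax),\ \|x\|_{2}\neq 0\}$ is semialgebraic). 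Finally I would handle the piecewise definition: $\mathrm{dom}(Q_\lambda)=D\setminus\{0_n\}$ where $D=\{x:\|x\|_2\le d\}$ is semialgebraic (polynomial inequality $\|x\|_2^2\le d^2$) and $\{0_n\}$ is semialgebraic, so $D\setminus\{0_n\}$ is semialgebraic; on this set $Q_\lambda$ agrees with the semialgebraic quotient above, and on the complement it is the constant $+\infty$, which in the convention of extended-valued functions is handled by noting the graph of $Q_\lambda$ (intersected with $\mathbb{R}^n\times\mathbb{R}$) equals the graph of the quotient restricted to $D\setminus\{0_n\}$, a finite intersection of semialgebraic sets. Hence $Q_{\lambda}$ is semialgebraic, and by the standard fact (see \cite{Hedy2013Convergence}) every proper lower semicontinuous semialgebraic function satisfies the KL property at every point of $\mathrm{dom}(\partial Q_\lambda)$.

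\textbf{Main obstacle.} The only genuinely delicate point is bookkeeping around the extended-real value $+\infty$ and the excluded origin: one must be careful to state precisely what "graph" means for a proper function taking the value $+\infty$, and to confirm that the restriction of a semialgebraic function to a semialgebraic set remains semialgebraic so that the piecewise definition does not break semialgebraicity. Everything else is a routine application of the closure properties of the semialgebraic class (finite sums, products, quotients with nonvanishing denominator, compositions, $\max$, and $[\cdot]_+$), so once the closure toolkit is invoked correctly the conclusion follows immediately; I expect the proof to be short.
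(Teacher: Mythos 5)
Your proof is correct and takes essentially the same route as the paper: you verify that the building blocks ($\|\cdot\|_{1}$, ${\rm env}_{\iota_{C}}^{1}\circ A$, the norm in the denominator, and the constraint set $D\setminus\{0_{n}\}$) are semialgebraic via polynomially-defined graphs and then invoke the standard stability properties cited in \cite{Hedy2013Convergence}. The only cosmetic difference is that the paper writes $Q_{\lambda}$ as a sum/product involving $\tfrac{1}{\|\cdot\|_{2}}$ and the indicator of $D\setminus\{0_{n}\}$ as separate semialgebraic pieces, whereas you clear the denominator in the graph and restrict to the semialgebraic domain; both are the same closure-property argument.
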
  
\begin{proof}
According to the stability properties of semialgebraic functions \cite{Hedy2013Convergence}, we know that finite sums and products of semialgebraic functions are semialgebraic. Therefore, it suffices to show $\lambda\|\cdot\|_{1}$, ${\rm env}_{\iota_{C}}^{1}\circ A$, $\frac{1}{\|\cdot\|_{2}}$, and the indicator function on $\{x\in\mathbb{R}^{n}:\|x\|_{2}\le d, x\neq 0_{n}\}$ are semialgebraic. It has been shown in \cite{Hedy2013Convergence} that $\lambda\|\cdot\|_{1}$ is semialgebraic. Then we prove ${\rm env}_{\iota_{C}}^{1}\circ A$ and $\frac{1}{\|\cdot\|_{2}}$ are semialgebraic, i.e., Graph(${\rm env}_{\iota_{C}}^{1}\circ A$) and Graph($\frac{1}{\|\cdot\|_{2}}$) are semialgebraic sets. We have
$$
\begin{aligned}
{\rm Graph}({\rm env}_{\iota_{C}}^{1}\circ A) &= \{(x,s)\in\mathbb{R}^{n}\times \mathbb{R}:\frac{1}{2}(\|Ax-b\|_{2}-\epsilon)_{+}^{2} = s\}\\
=&\{(x,s)\in\mathbb{R}^{n}\times \mathbb{R}:\|Ax-b\|_{2}\le\epsilon, s=0\}\cup \\
&\{(x,s)\in\mathbb{R}^{n}\times \mathbb{R}:-\|Ax-b\|_{2}\\
\le& -\epsilon, \|Ax-b\|_{2}^{4}-(2\epsilon^{2}+4s)\|Ax-b\|_{2}^{2}+4s^{2}-4\epsilon^{2}s+\epsilon^{4} = 0\},\\
{\rm Graph}(\frac{1}{\|\cdot\|_{2}}) =&\{(x,s)\in\mathbb{R}^{n}\times \mathbb{R}:\frac{1}{\|x\|_{2}} = s\}\\
=&\{(x,s)\in\mathbb{R}^{n}\times \mathbb{R}: s^{2}\|x\|_{2}^{2} = 1\}.
\end{aligned}
$$
The above equalities imply that both ${\rm env}_{\iota_{C}}^{1}\circ A$ and $\frac{1}{\|\cdot\|_{2}}$ are semialgebraic. We next prove the indicator function on $\{x\in\mathbb{R}:\|x\|_{2}\le d,x\neq 0_{n}\}$ is semialgebraic, which amounts to showing $\{x\in\mathbb{R}:\|x\|_{2}\le d,x\neq 0_{n}\}$ is a semialgebraic set. It is clear that
$$
\{x\in\mathbb{R}^{n}:\|x\|_{2}\le d, x\neq 0_{n}\} = \{x\in\mathbb{R}^{n}:\|x\|_{2}\le d, -\|x\|_{2}^{2}<0\},
$$
which indicates that this set is semialgebraic. Then, we complete the proof immediately.
\end{proof}

With the help of Lemma \ref{Q_semialgebraic}, Theorems \ref{converge_PPGA} and \ref{converge_PPGAL}, we obtain the main result of this subsection in the following theorem.
\begin{theorem}
The following statements hold:\\
(i) If $\{x^{k}:x\in\mathbb{N}\}$ is generated by PPGA or PPGA$\_$ML applied to problem (\ref{l1_l2_minimization_problem_constraint_envelop}), then $\sum_{k = 1}^{\infty}\|x^{k}-x^{k-1}\|_{2}<+\infty$ and $\{x^{k}:k\in\mathbb{N}\}$ converges to a stationary point of $Q_{\lambda}$. \\
(ii) If $\{x^{k}:x\in\mathbb{N}\}$ is generated by PPGA$\_$NL, then $\{x^{k}:x\in\mathbb{N}\}$ is bounded and accumulation point of it is a stationary point of $Q_{\lambda}$.
\end{theorem}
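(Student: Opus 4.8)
The plan is to recognize this theorem as a corollary assembled from the general convergence results already established, together with the semialgebraicity of $Q_{\lambda}$. First I would invoke the discussion in subsection \ref{4_1}: problem (\ref{l1_l2_minimization_problem_constraint_envelop}) is precisely problem (\ref{general_optimization_problem}) with the choices $f=\lambda\|\cdot\|_{1}+\iota_{D}$, $h={\rm env}_{\iota_{C}}^{1}\circ A$ and $g=\|\cdot\|_{2}$, and it was checked there that these $f,g,h$ satisfy Assumption 1 (with $L=\|A\|_{2}^{2}$). Consequently the associated objective $F$ coincides with $Q_{\lambda}$, a stationary point of $F$ in this setting is exactly a stationary point of $Q_{\lambda}$, and all of Theorems \ref{eeee}, \ref{converge_PPGA} and \ref{converge_PPGAL} apply to the sequences produced when PPGA, PPGA$\_$ML or PPGA$\_$NL is run on (\ref{l1_l2_minimization_problem_constraint_envelop}).

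Next I would supply the one hypothesis needed to upgrade subsequential convergence to global convergence, namely the KL property of the objective. This comes from Lemma \ref{Q_semialgebraic}: $Q_{\lambda}$ is semialgebraic, and every proper lower semicontinuous semialgebraic function is a KL function, i.e., satisfies the KL property at every point of ${\rm dom}(\partial Q_{\lambda})$ (see \cite{Hedy2013Convergence}). In particular $Q_{\lambda}$ has the KL property at every accumulation point of the generated sequences, since by Theorems \ref{eeee} and \ref{converge_PPGAL}(ii) such points are stationary and therefore lie in ${\rm dom}(\partial Q_{\lambda})$; this is all that Proposition \ref{KL_property} actually requires.

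It then remains to combine the pieces. For PPGA, Theorem \ref{converge_PPGA} together with the KL property of $Q_{\lambda}$ gives $\sum_{k=1}^{\infty}\|x^{k}-x^{k-1}\|_{2}<+\infty$ and convergence of the entire sequence to a stationary point of $Q_{\lambda}$; for PPGA$\_$ML (PPGA$\_$L with $N=0$) the same conclusion follows from Theorem \ref{converge_PPGAL}(iii). This establishes Item (i). For PPGA$\_$NL (PPGA$\_$L with $N>0$), Theorem \ref{converge_PPGAL}(ii) yields boundedness of $\{x^{k}:k\in\mathbb{N}\}$ and that every accumulation point is a stationary point of $Q_{\lambda}$, which is Item (ii).

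I do not expect a genuine analytic obstacle here: the statement is essentially a packaging of earlier theorems. The only point meriting care is the verification that those theorems truly apply, that is, that Assumption 1 holds for the specific $f,g,h$ above (already carried out in subsection \ref{4_1}) and that the KL property is available exactly where Proposition \ref{KL_property} invokes it, which is guaranteed by semialgebraicity via Lemma \ref{Q_semialgebraic}.
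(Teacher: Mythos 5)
Your proposal is correct and follows essentially the same route as the paper: identify (\ref{l1_l2_minimization_problem_constraint_envelop}) as an instance of (\ref{general_optimization_problem}) satisfying Assumption 1, use Lemma \ref{Q_semialgebraic} to obtain the KL property of $Q_{\lambda}$, and then conclude via Theorems \ref{converge_PPGA} and \ref{converge_PPGAL}. Your added care about where exactly the KL property is invoked is a fine (if slightly more detailed) rendering of the paper's one-line argument.
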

\begin{proof}
In view of Lemma \ref{Q_semialgebraic}, $Q_{\lambda}$ satisfies the KL property at any point in ${\rm dom}(F)$. We then obtain this theorem by Theorems \ref{converge_PPGA} and \ref{converge_PPGAL}.
\end{proof}

\section{Numerical experiments}\label{S6}
In this section, we conduct some numerical simulations on sparse signal recovery to test the efficiency of our proposed algorithms, namely, PPGA, PPGA$\_$ML, and PPGA$\_$NL. All the experiments are carried out on a MAC with an Intel Core i5 CPU (2.7GHz) and Matlab 2019b.

We consider sparse signal recovery problems on two types of matrices: random Gaussian and random oversampled discrete cosine transform (DCT) matrices. A random Gaussian matrix has i.i.d standard Gaussian entries, while a random oversampled DCT matrix is specified as $A = [a_{1},...,a_{n}]\in\mathbb{R}^{m\times n}$ with 
$$
a_{j} = \frac{1}{\sqrt{m}}{\rm cos}\left(\frac{2\pi w j}{F}\right), \quad j = 1,2,...,n,
$$
where $w\in\mathbb{R}^{m}$ has i.i.d entries uniformly chosen in $[0,1]$, $F>0$ is a parameter to control the coherence in a way that a larger $F$ yields a more coherent matrix. In order to generate a $s$-sparse ground truth signal $x_{g}\in\mathbb{R}^{n}$ with at most $s$ nonzero entries, we first randomly choose a support set $J\subset \{1,2,...,n\}$ with $|J| = s$. Then, following the work of \cite{Chao2019Accelerated,Zeng2021Analysis}, when the sensing matrix is an oversampled DCT matrix, the nonzero elements are generated by the following MATLAB command: 
\begin{equation}\label{groundtruth}
{\rm xg = sign(randn(s,1)).*10.\, \hat{ }\, (D*rand(s,1))},
\end{equation}
where $randn$ and $rand$ are the MATLAB commands for the standard Gaussian distribution and the uniform distribution respectively, D is an exponential factor that controls the dynamic range $\Theta(x) = \frac{{\rm max}\{|x_{g}|\}}{{\rm min}\{|x_{g}|\}}$ of $x_{g}$. Following the work of \cite{Yifei2018Fast}, when the sensing matrix is a random Gaussian matrix, $x_{g}(J)$ has i.i.d standard Gaussian entries. The initial point $x^{0}$ of all the compared algorithms throughout this section except \ref{Algorithmic comparison} is chosen as an approximate solution of the $\ell_{1}$ problem, which is obtained by applying 2$n$ ADMM iterations to the problem 
\begin{equation}\label{initial_solution}
{\rm min}\{0.08\|x\|_{1}+\frac{1}{2}\|Ax-b\|_{2}^{2}:x\in\mathbb{R}^{n}\}.
\end{equation}
The stopping criterions for all the compared algorithms are the relative error $\|x^{k+1}-x^{k}\|_{2}/\|x^{k}\|_{2} \le 10^{-8}$ or $k>500n$.
\subsection{Comparison on PPGA, PPGA$\_$ML and PPGA$\_$NL}\label{6_1}
In this section, we compare the computational efficiency of our proposed algorithms, namely, PPGA, PPGA$\_$ML and PPGA$\_$NL. We consider the noise-free sparse signal recovery problem on random oversampled DCT matrices with $m = 64, n = 1024, s = 5, F\in \{1,5\}$ and $D\in\{1,2,3\}$. In this case, $\epsilon = 0$ and we set the parameter $d = 10^{7}$, $\lambda = 0.008$ for problem (\ref{l1_l2_minimization_problem_constraint_envelop}). For PPGA $\alpha_{k} = \frac{0.999}{\|A\|_{2}^{2}}$, for PPGA$\_$ML and PPGA$\_$NL $\eta = 0.5$, $a = 10^{-8}$ and $N = 4$.

Figure \ref{computation-efficient} plots the objective function values of problem (\ref{l1_l2_minimization_problem_constraint_envelop}), that is $Q_{\lambda}(x^{k})$, versus iteration numbers $k\in\mathbb{N}$ for PPGA, PPGA$\_$ML and PPGA$\_$NL. We find that in all the cases, $Q_{\lambda}(x^{k})$ generated by PPGA and PPGA$\_$ML decreases as $k$ increases. This verifies Theorem \ref{convergence_al3} (i) and Algorithm 2 Step 2 (c) when $N = 0$. It can be observed from Figure \ref{computation-efficient} that objective function values of PPGA$\_$ML and PPGA$\_$NL decrease much faster than those of PPGA. For some cases, PPGA$\_$ML and PPGA$\_$NL performs very similarly, see the top four plots of Figure \ref{computation-efficient}. While PPGA$\_$ML (resp., PPGA$\_$NL) performs slightly better in terms of computational efficiency than PPGA$\_$NL (resp., PPGA$\_$ML) for the bottom right plot (resp., bottom left plot) of Figure \ref{computation-efficient}. We also find from Figure \ref{computation-efficient} that as the difficulty of the sparse signal recovery problem is enhanced, that is F and D increase, all the compared algorithms need more iterations to converge.

\begin{figure}[!htbp]
\centering
\includegraphics[width=0.32\linewidth]{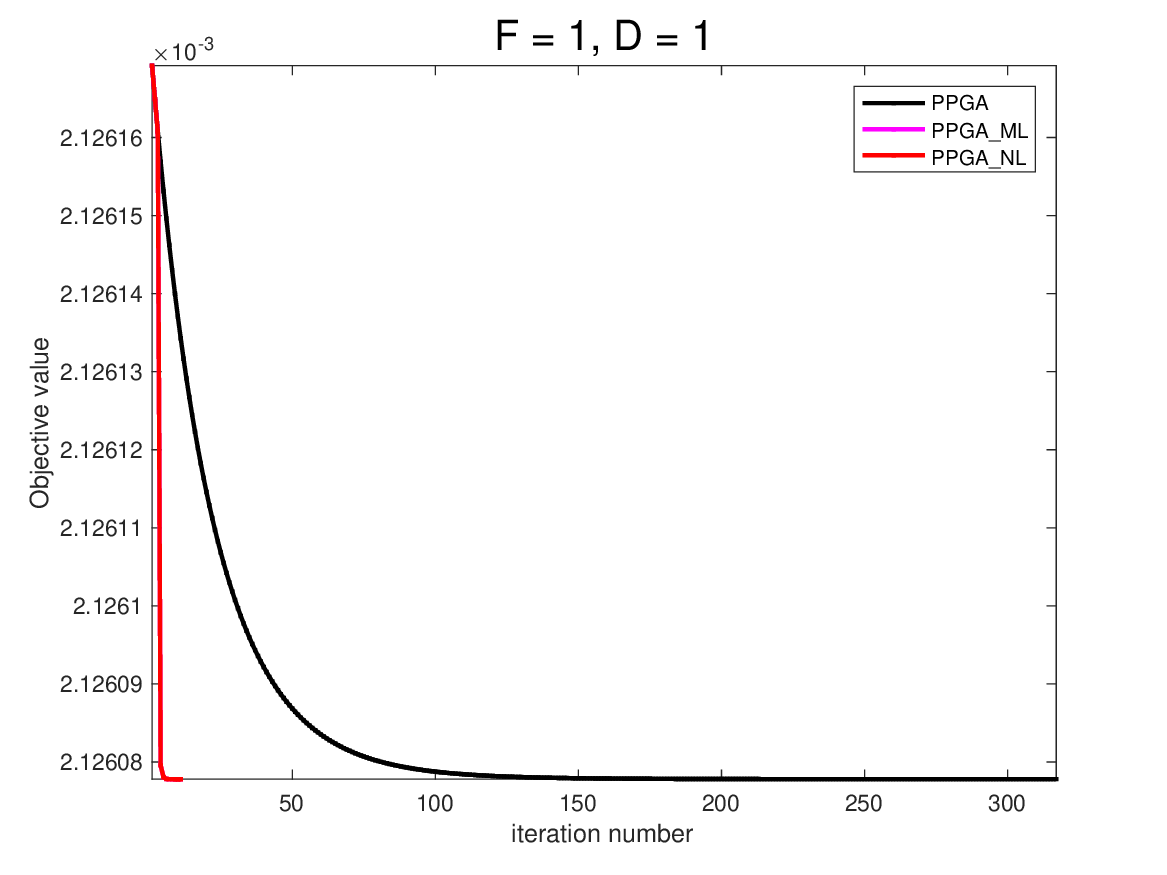}
\includegraphics[width=0.32\linewidth]{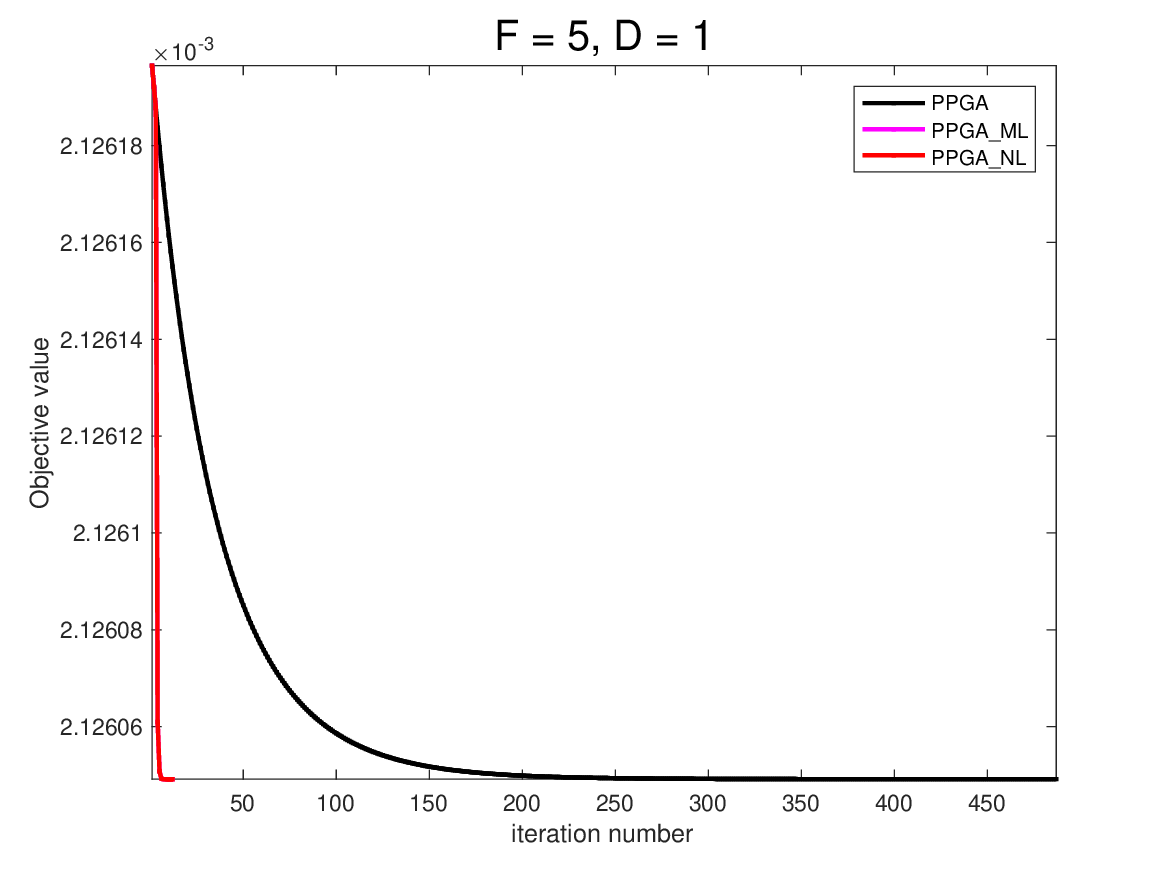}
\includegraphics[width=0.32\linewidth]{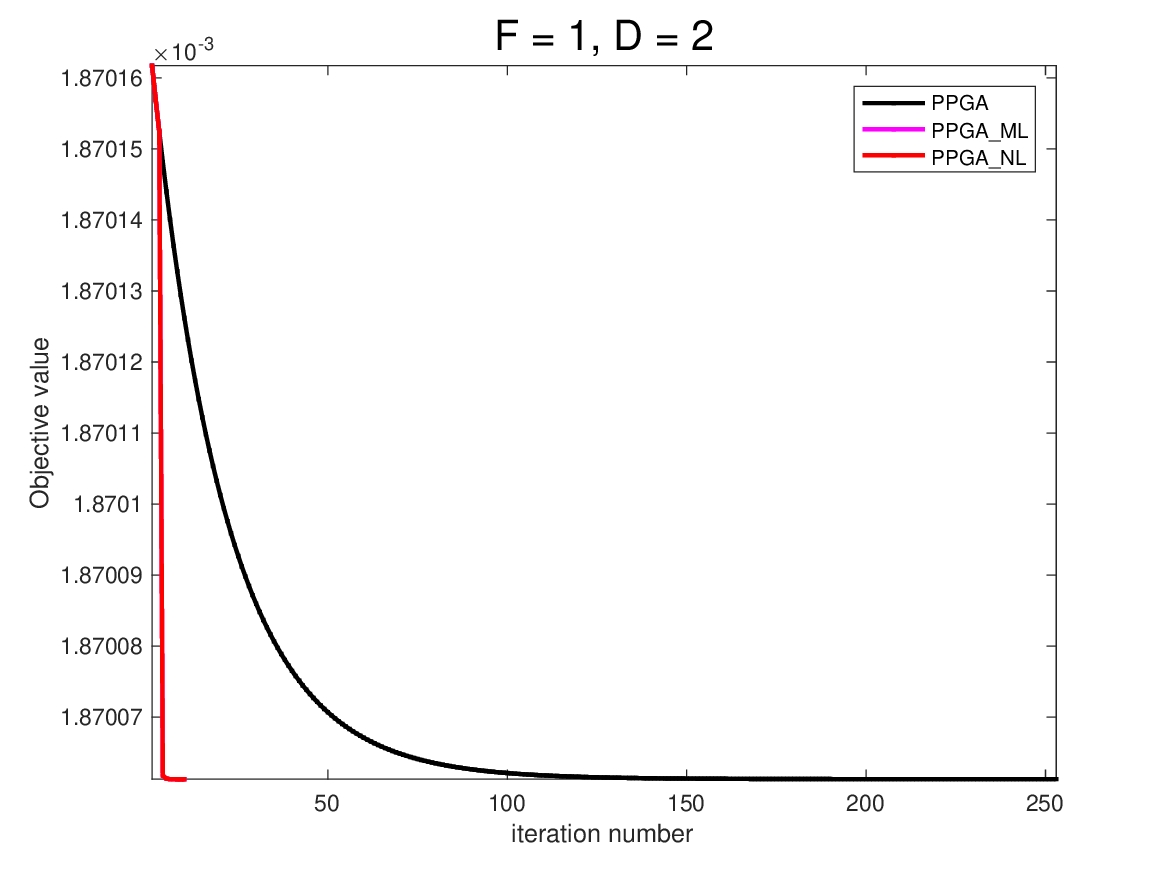} \hspace{5pt}
\includegraphics[width=0.32\linewidth]{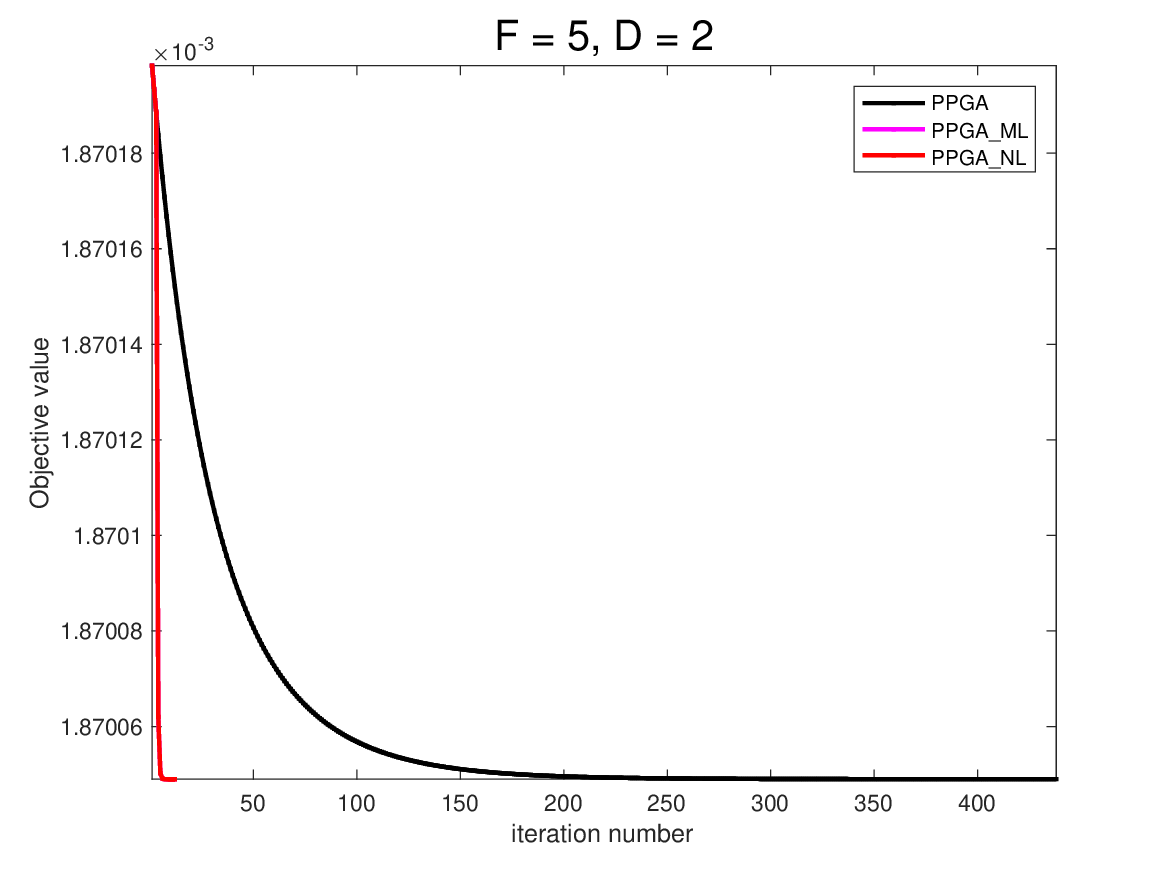}
\includegraphics[width=0.32\linewidth]{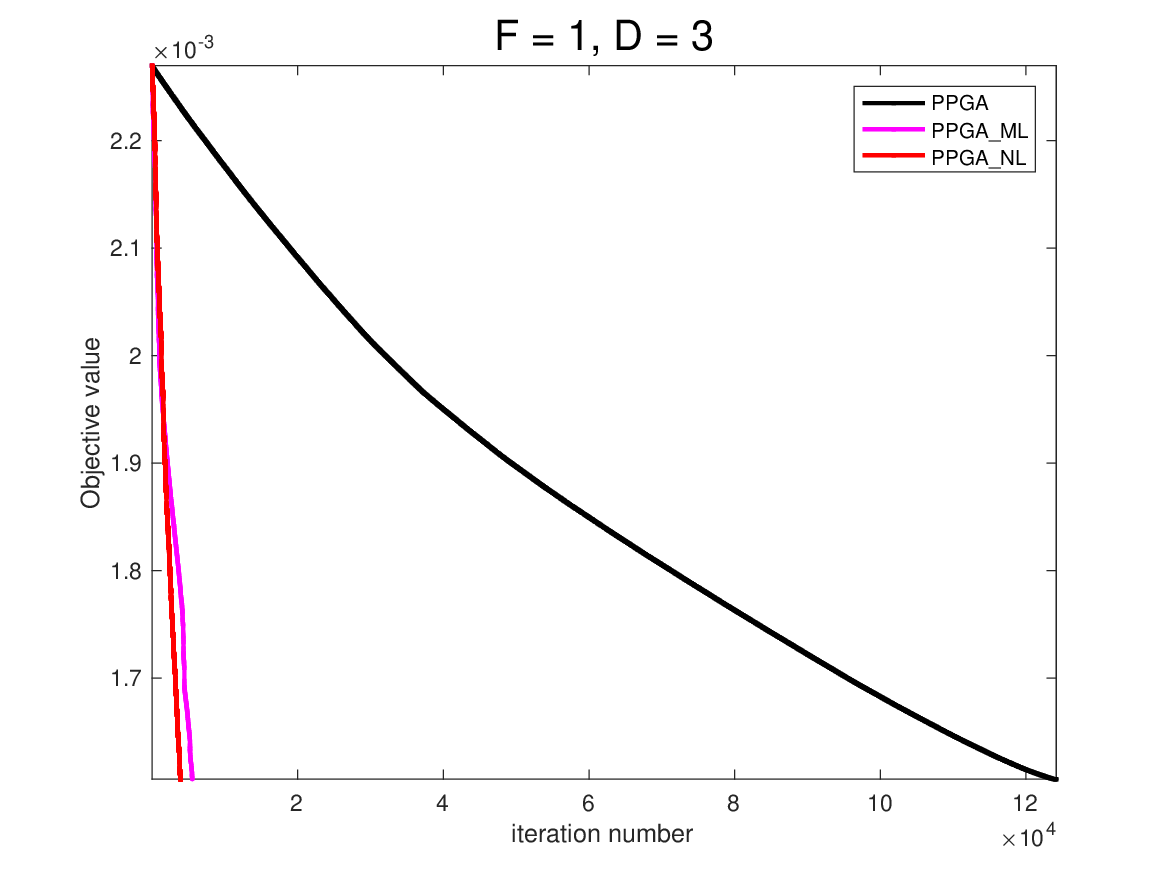} 
\includegraphics[width=0.32\linewidth]{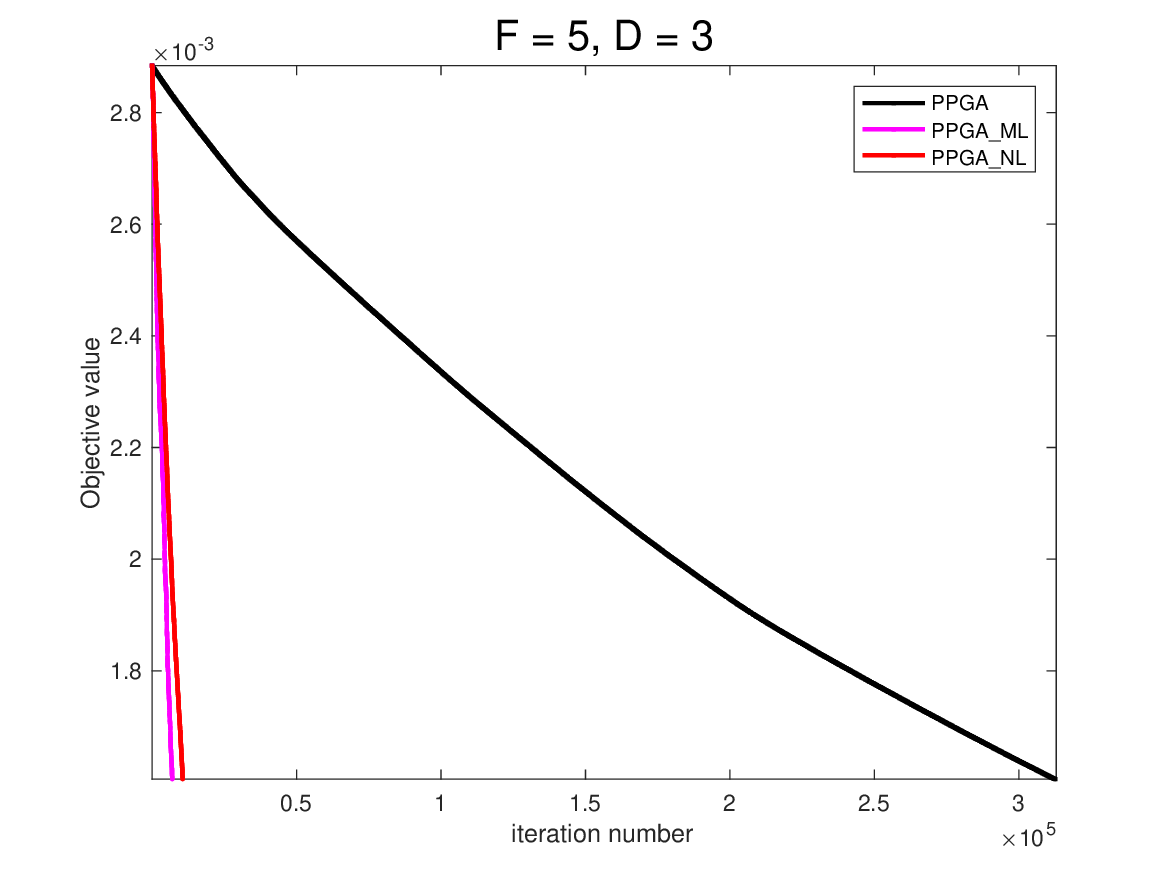}
\caption{Objective values of problem (\ref{l1_l2_minimization_problem_constraint_envelop}) versus iteration numbers.}\label{computation-efficient}
\end{figure}

\begin{figure}[!htbp]
\centering
\includegraphics[width=0.32\linewidth]{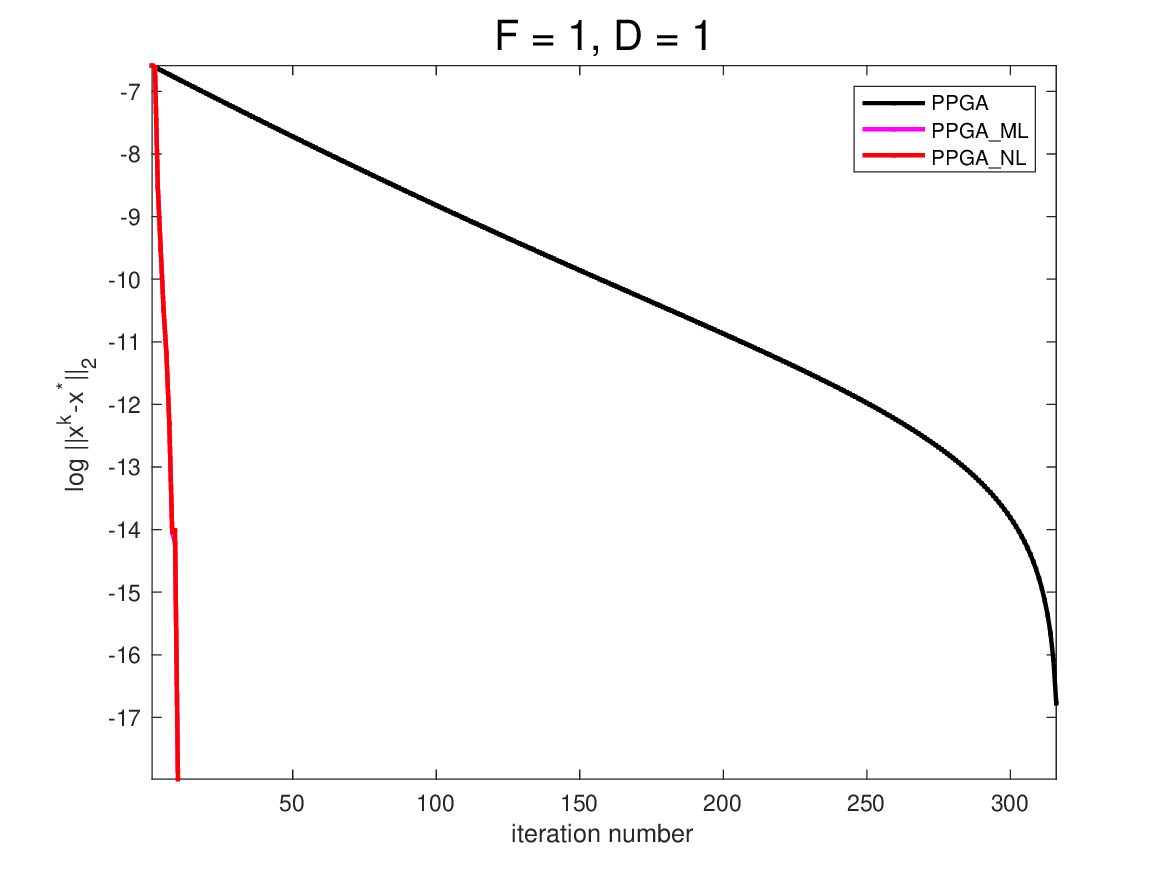}  
\includegraphics[width=0.32\linewidth]{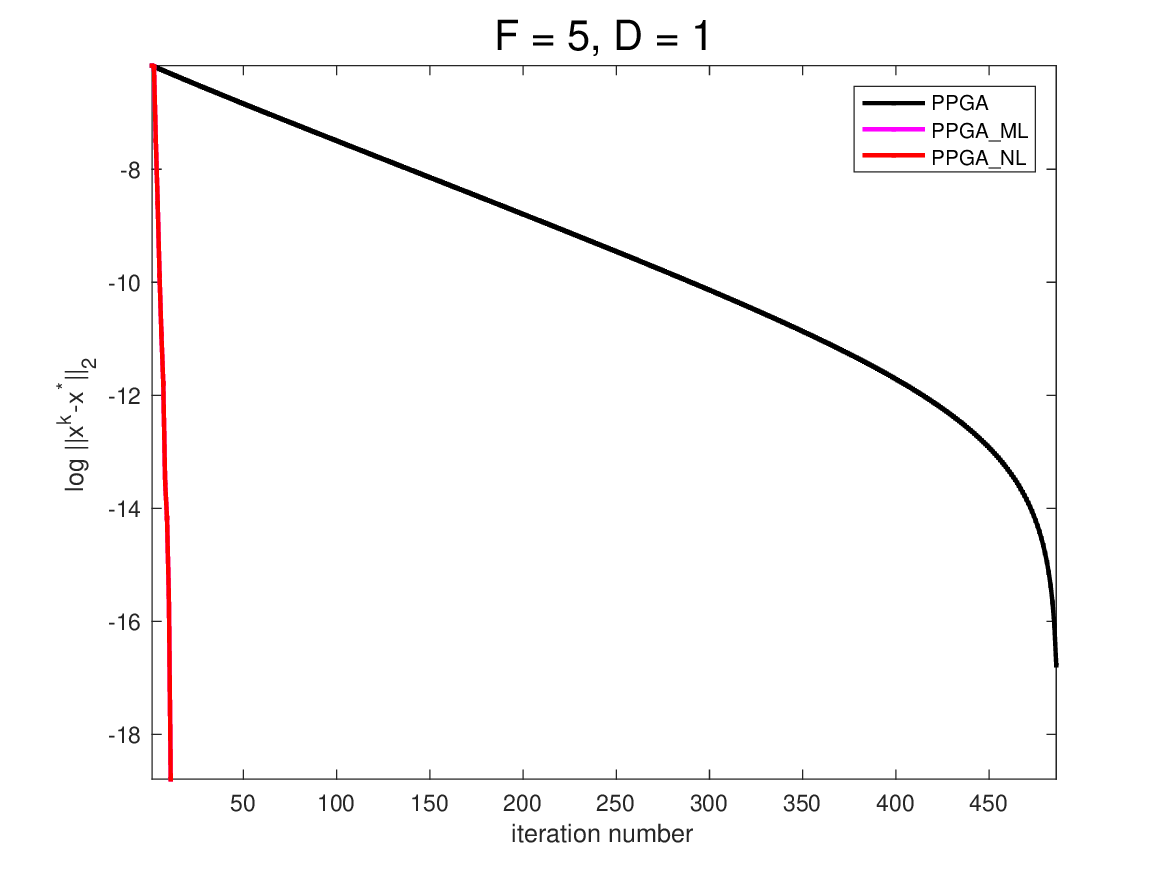}
\includegraphics[width=0.32\linewidth]{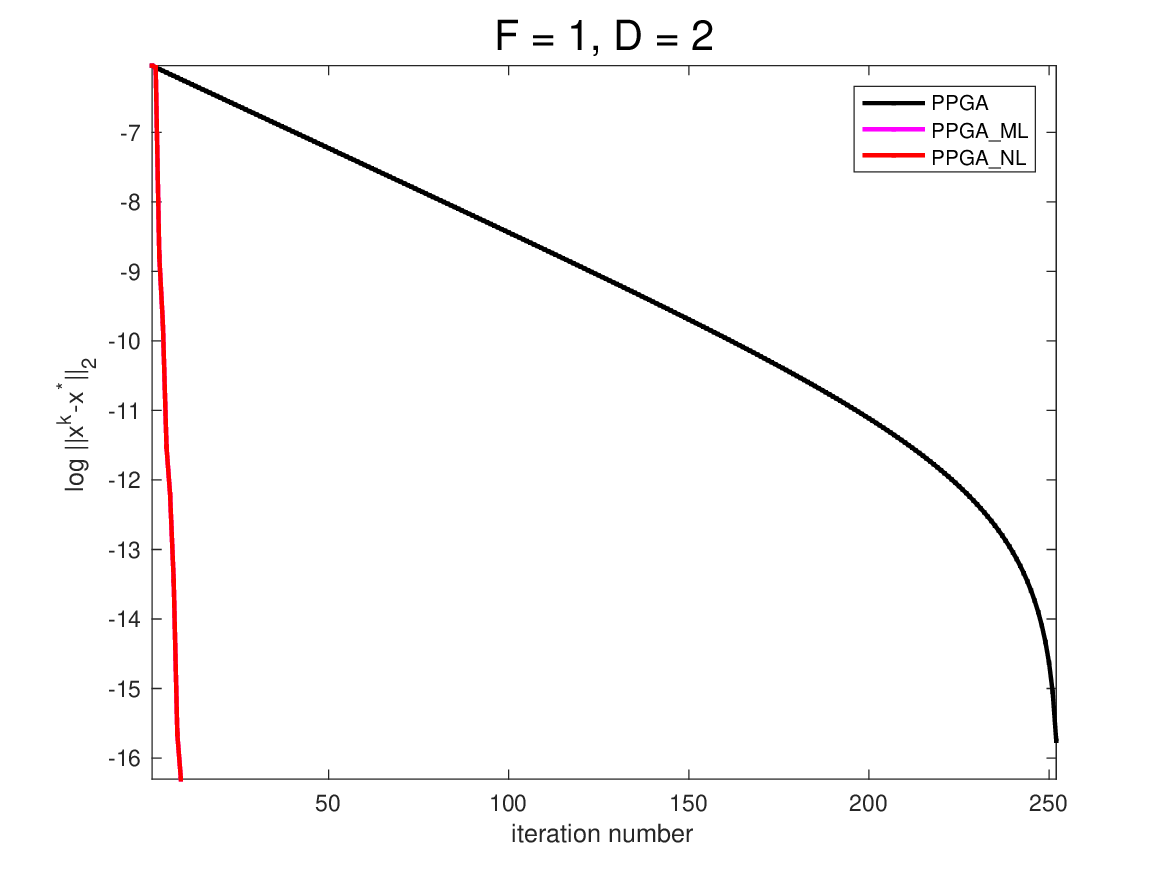} \hspace{5pt}
\includegraphics[width=0.32\linewidth]{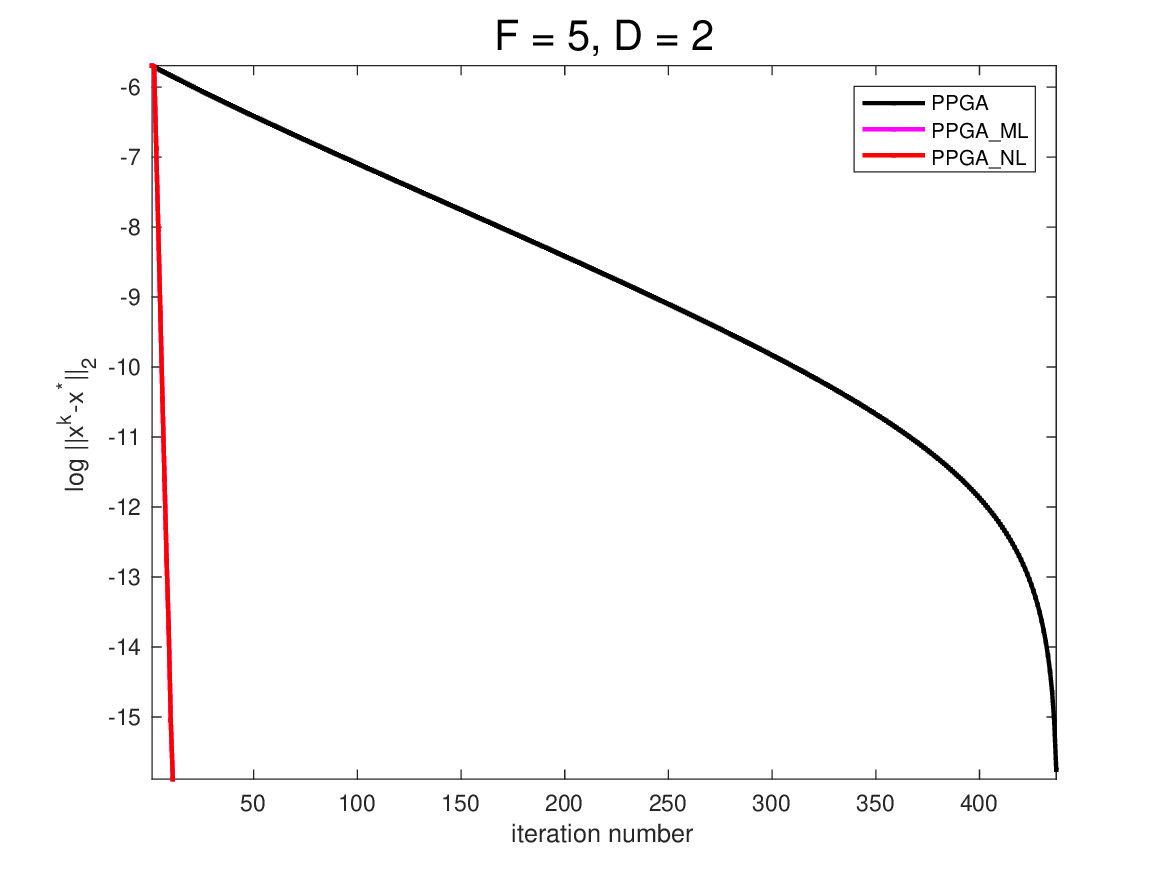}
\includegraphics[width=0.32\linewidth]{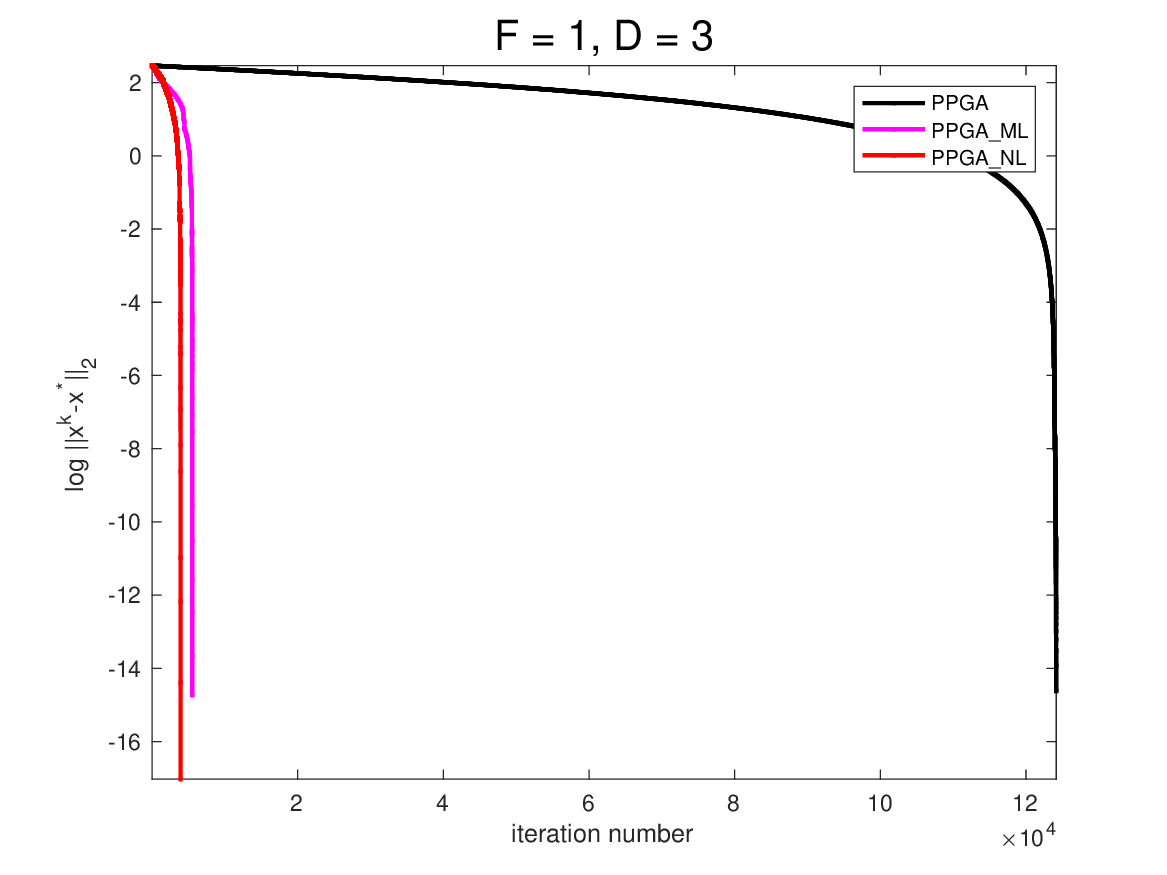}  
\includegraphics[width=0.32\linewidth]{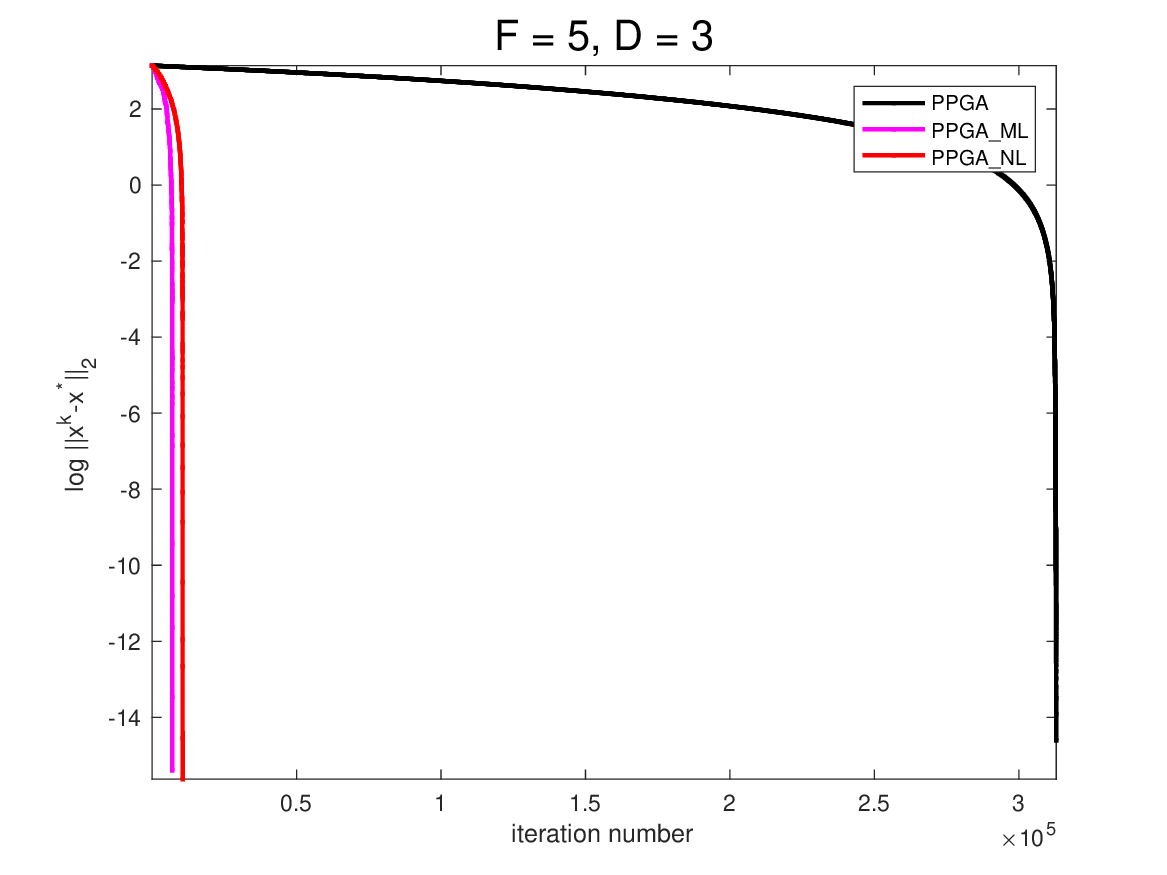}
\caption{$\|x^{k}-x^{\star}\|_{2}$ (in logarithmic scale).}\label{log_xk_x}
\end{figure}

We also plot $\|x^{k}-x^{\star}\|_{2}$ (in logarithmic scale) against the number of iteration in Figure \ref{log_xk_x}, where $x^{\star}$ is the approximated solution produced by the corresponding algorithm. It is obvious that the sequence generated by PPGA$\_$ML or PPGA$\_$NL converges much faster than that by PPGA. As can be seen from Figure \ref{log_xk_x}, the sequences generated by all the three algorithm appear to converge R-linearly although we have no theoritical results concerning the convergence rate of the algorithm.

\subsection{Noise-free case}
In this subsection, we focus on the $\ell_{1}/\ell_{2}$ based sparse signal recovery problem in the noise-free case for oversampled DCT matrices. Throughout this subsection, the oversampled DCT matrices have size of $64\times 1024$. Since the efficiency of the $\ell_{1}/\ell_{2}$ based model compared with other sparse promoting model has been extensively studied in \cite{Chao2019Accelerated}, we only compare the signal recovery capacity of our proposed algorithms with that of the state-of-the-art algorithms for solving the $\ell_{1}/\ell_{2}$ minimization problem (\ref{model_ratio}), including A1, A2, BS which are recently proposed in \cite{Chao2019Accelerated}, and $\ell_{1}/\ell_{2}$$\_$box \footnote{We use the Matlab code at https://github.com/yflouucla/L1dL2 with default parameter settings for A1, A2, BS and $\ell_{1}/\ell_{2}$$\_$box.} \cite{Rahimi-Wang-Dong-Lou} which applies ADMM to solve the $\ell_{1}/\ell_{2}$ minimization problem with a box constraint, i.e., problem (\ref{model_ratio}) with a box constraint 
$
\left\{x\in\mathbb{R}^{n}: d_{1}\le x_{i}\le d_{2}, i = 1,...,n\right\}.
$
We evaluate the performance of sparse signal recovery in term of success rate, that is the number of successful trials over the total number of trials. A success is declared if the relative error of the reconstructed solution $x^{\star}$ to the ground truth signal $x_{g}$ is no more than $10^{-3}$, i.e., $\frac{\|x^{\star}-x_{g}\|_{2}}{\|x_{g}\|_{2}}\le 10^{-3}$. In all the settings of this subsection, we set $\lambda = 0.001$ when $F \in \{1,5\}, D = 1$; $\lambda = 0.004$ when $F \in \{1,5\}, D = 3$; $\lambda = [0.004,0.004,0.1,0.2,0.5,0.5]$ as $s$ varies $[2,6,10,14,18,22]$ when $F \in \{1,5\}, D = 5$; the other parameters of PPGA, PPGA$\_$ML and PPGA$\_$NL are the same as those in subsection \ref{6_1}, and the parameter settings of A1, A2, BS and $\ell_{1}/\ell_{2}$$\_$box are chosen as suggested in \cite{Chao2019Accelerated} and \cite{Rahimi-Wang-Dong-Lou}.

Figure \ref{noisy-free} exhibits the success rate over 100 trials of all the compared algorithms for sparse recovery problem with $S\in\{2,6,...,22\}$, $F\in\{1,5\}$ and $D\in\{1,3,5\}$ for oversampled DCT matrices. Since PPGA, PPGA$\_$ML and PPGA$\_$NL obtain almost the same success rates in all the cases, we only include the results of PPGA$\_$NL for the sake of clarity of the plots. We observe from Figure \ref{noisy-free} that our proposed algorithm is comparable to A1, A2 and BS, and achieves much higher success rates compared with $\ell_{1}/\ell_{2}\_$box.

\begin{figure}[!htbp]
\centering
\includegraphics[width=0.45\linewidth]{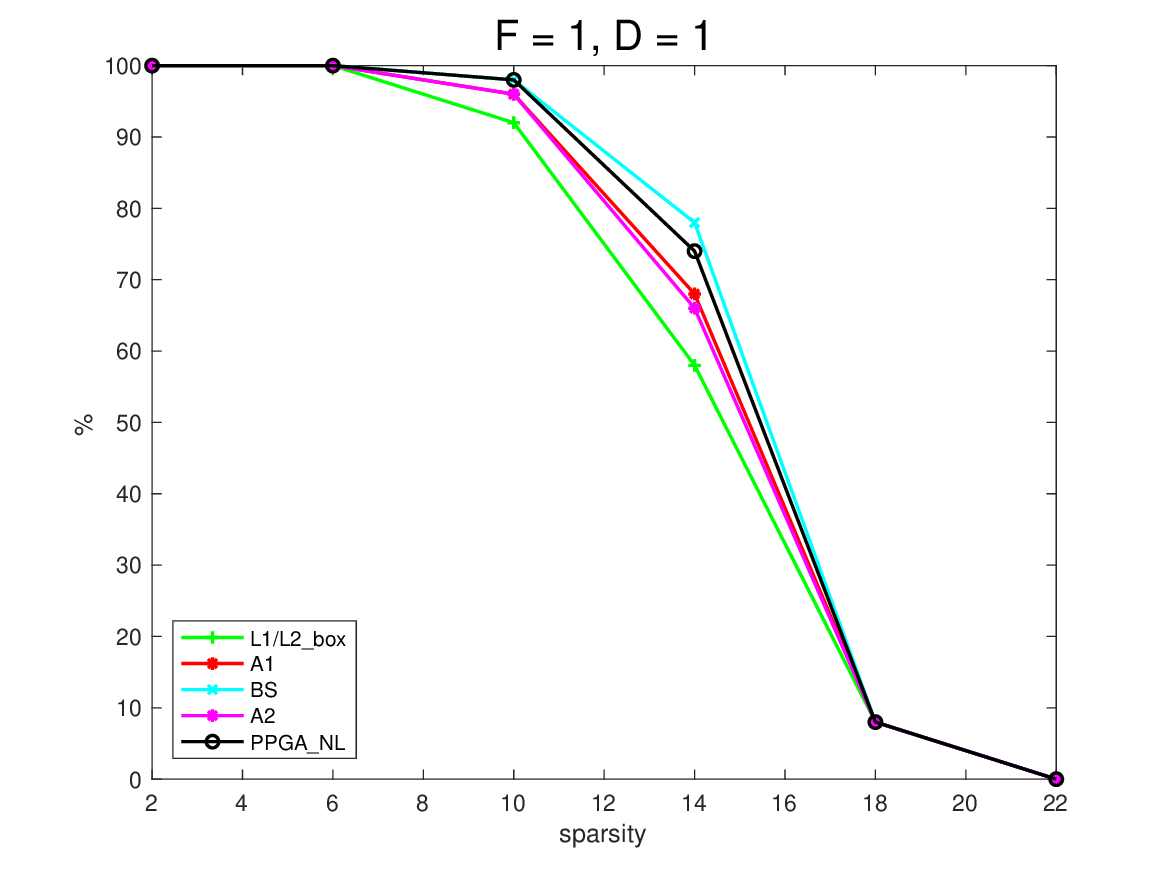} \hspace{5pt}
\includegraphics[width=0.45\linewidth]{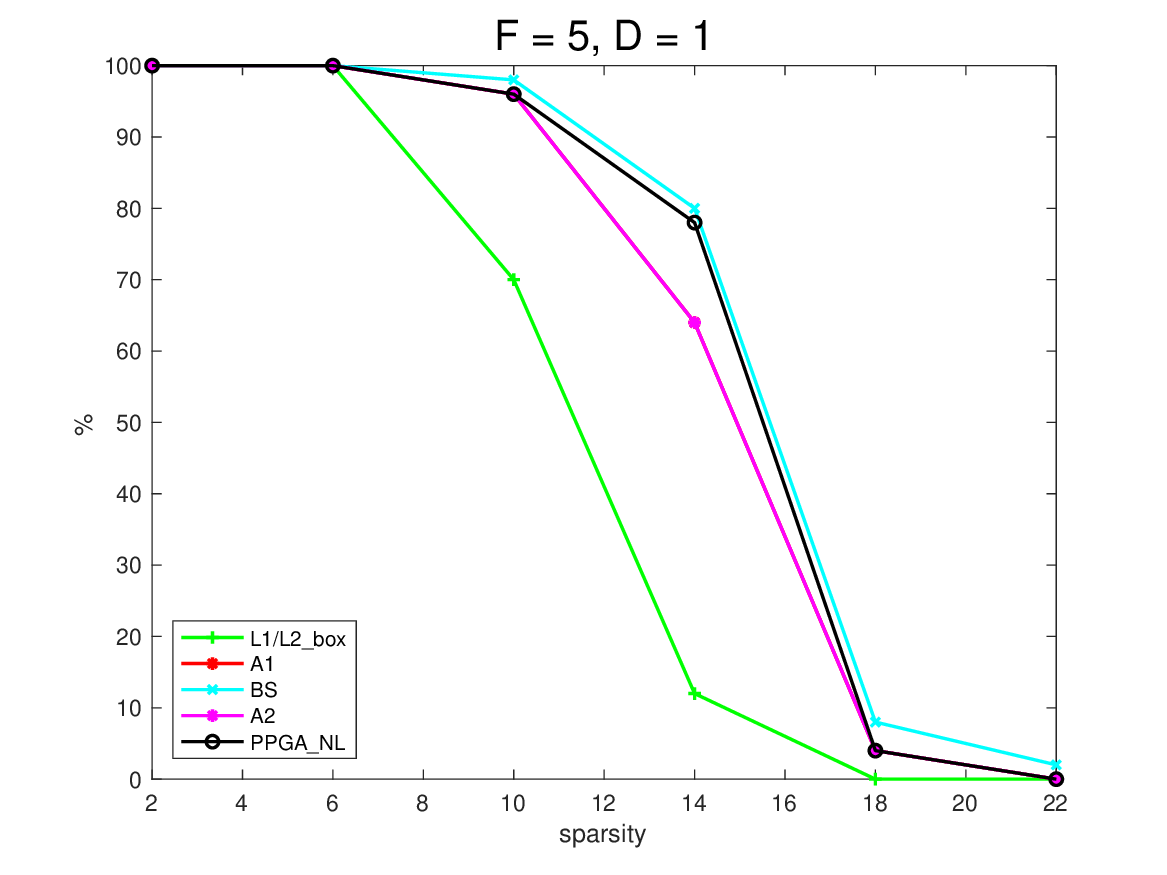}
\includegraphics[width=0.45\linewidth]{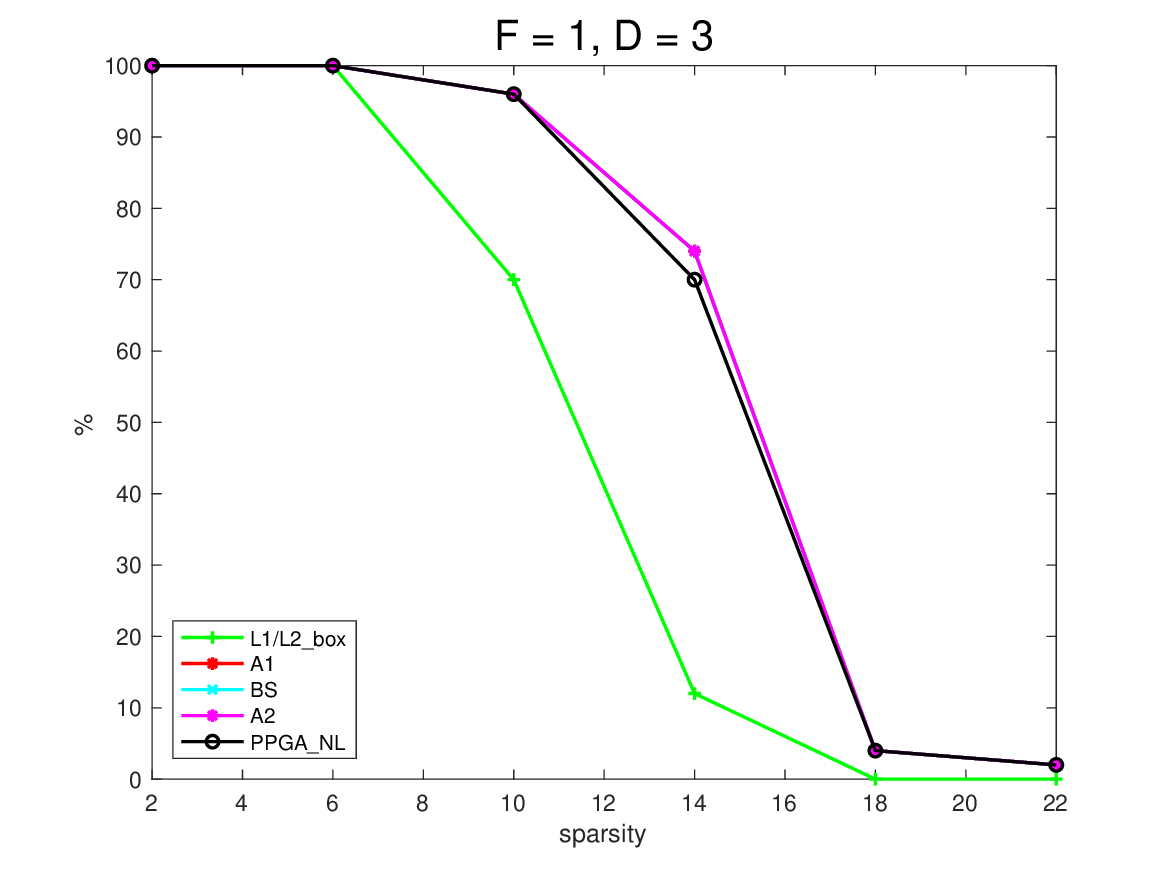} \hspace{5pt}
\includegraphics[width=0.45\linewidth]{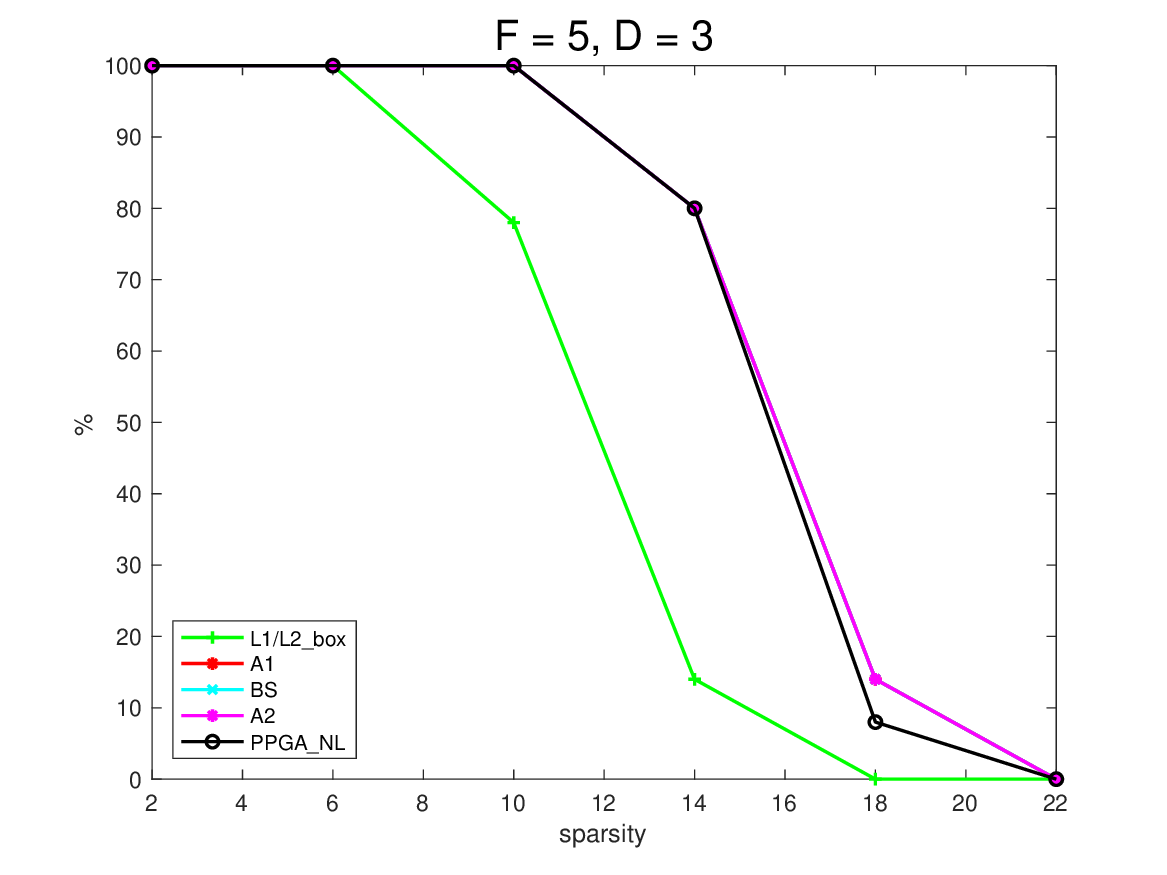}
\includegraphics[width=0.45\linewidth]{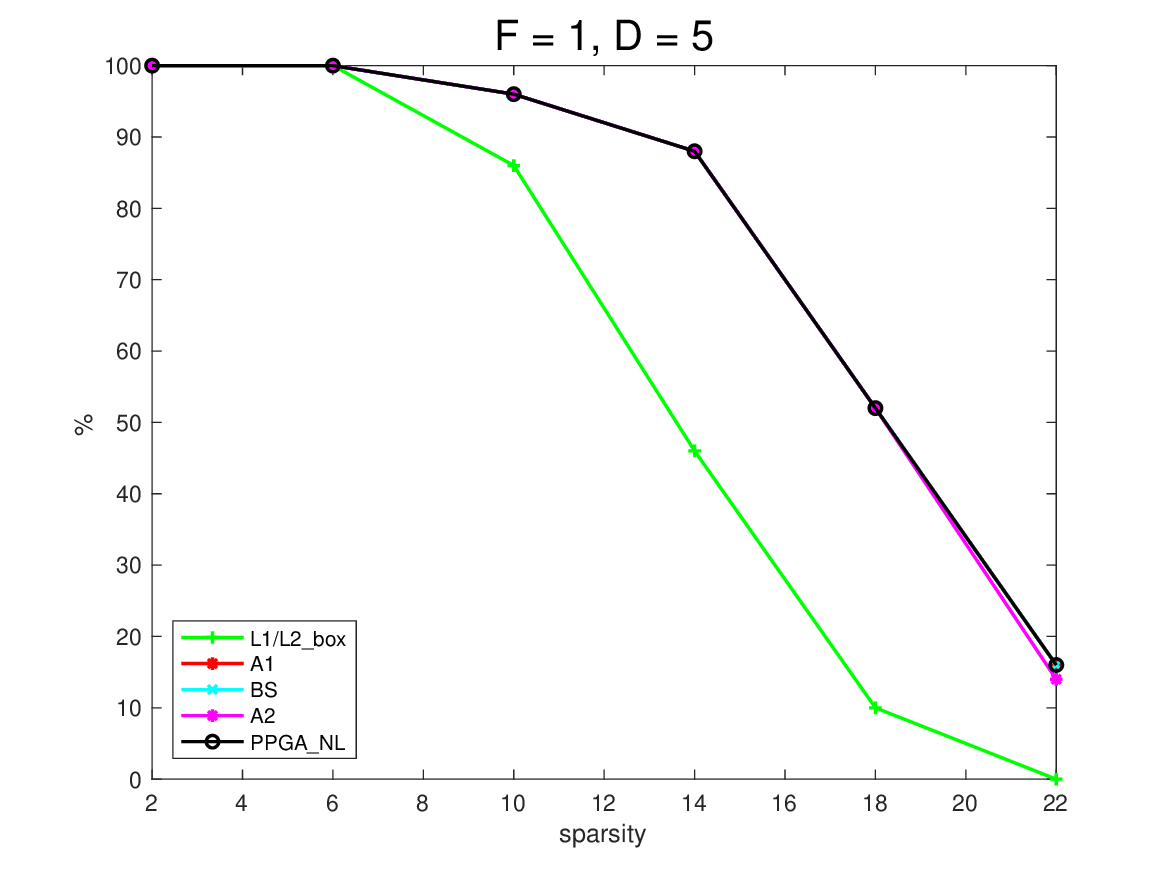} \hspace{5pt}
\includegraphics[width=0.45\linewidth]{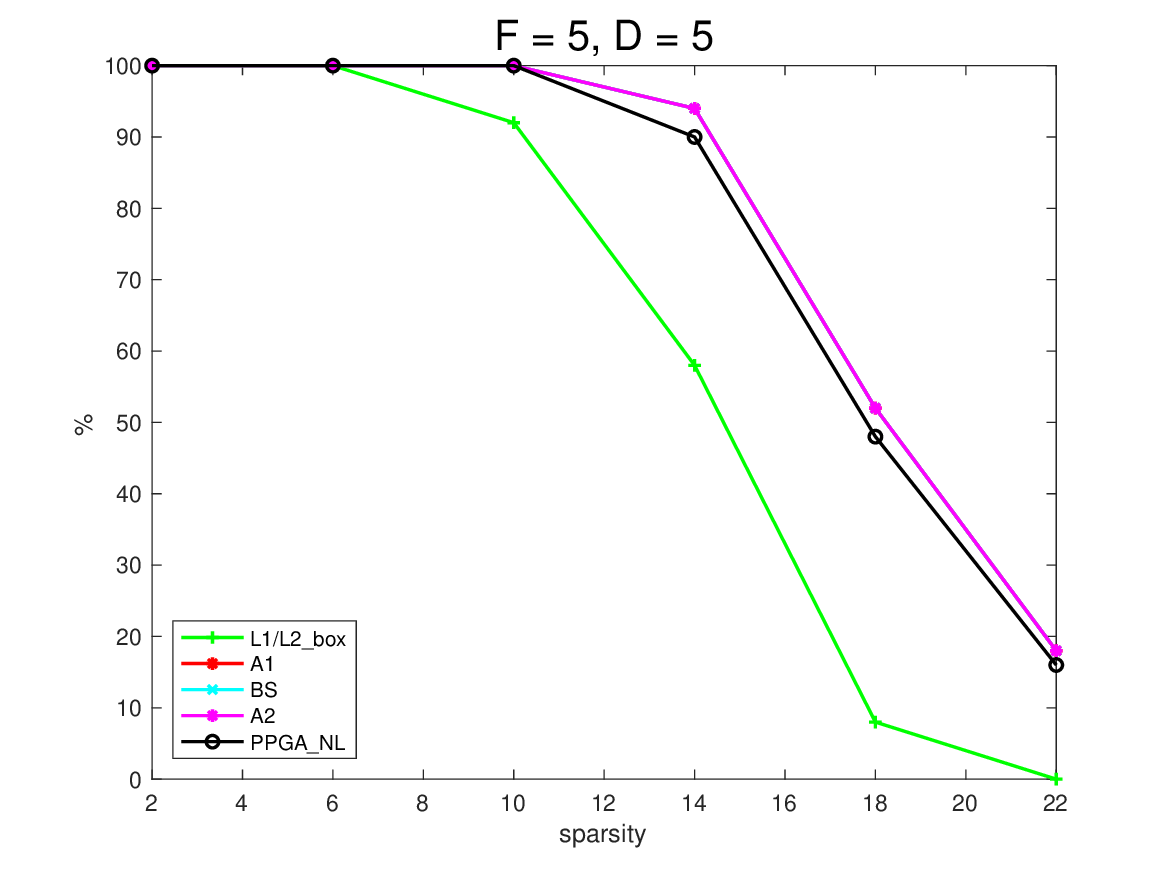}
\caption{Plots of success rates versus sparsity for noise-free sparse recovery with oversampled DCT matrices.}\label{noisy-free}
\end{figure}

\subsection{Noisy data}
In this subsection, we consider the sparse signal recovery problem with white Gaussian noise. In order to test the performance of the proposed algorithms in solving $\ell_1/\ell_2 $ based sparse recovery problems, we present in subsection \ref{Algorithmic comparison} various computational aspects of the proposed algorithms together with comparison to MBA \cite{Zeng2021Analysis}. Then,  we compare various sparse promoting models in subsection \ref{6_3_2}.
\subsubsection{Algorithmic comparison}\label{Algorithmic comparison}
We compare our proposed algorithms with MBA \footnote{We use the Matlab code at https://www.polyu.edu.hk/ama/profile/pong/MBA$\_$l1vl2/ with default parameter settings for MBA.} \cite{Zeng2021Analysis} for $\ell_1/\ell_2$ sparse signal recovery with white Gaussian noise in this subsection. The test instances and experiment settings are similar as those in \cite{Zeng2021Analysis}. Specifically, A is an $m\times n$ random oversampled DCT matrix, the ground truth $x_{g}$ is generated as (\ref{groundtruth}), and $b = Ax_{g}+0.01e$, where $e\in\mathbb{R}^{m}$ has i.i.d standard Gaussian entries. Here, $m = 64, n = 1024, s\in\{8,12\}, F\in \{5,15\}$ and $D\in\{2,3\}$. The initial points of the compared algorithms in this subsection are chosen as 
$$
x^{0} = \left\{
\begin{aligned}
&A^{\dagger}b+\epsilon\frac{x_{\ell_{1}}-A^{\dagger}b}{\|Ax_{\ell_{1}} - b\|_{2}}, \qquad {\rm if} \ \|Ax_{\ell_{1}} - b\|_{2}>\epsilon,\\
&x_{\ell_{1}}, \qquad  \qquad  \qquad \qquad \quad {\rm otherwise},
\end{aligned}
\right.
$$
where $x_{\ell_{1}}$ is an approximate solution of (\ref{initial_solution}) and $A^{\dagger}$ is the pseudoinverse of $A$. We point out that such $x^{0}$ does in the constraint of problem (\ref{model_ratio_constraint}). The recovery error 
$
{\rm ReeErr} = \frac{\|x^{\star} - x_{g}\|_{2}}{{\rm max}\{1,\|x_{g}\|_{2}\}},
$
with $x^{\star}$ being the output of the corresponding algorithm, is used to evaluate the recovery capacity of the algorithm. For
our proposed algorithms, $\epsilon$ is set to be $3\times 10^{-3}\sqrt{m}$ and all the other parameters are chosen as those in subsection \ref{6_1}. For MBA, we consider two parameter settings. The first one, referred to as MBA$\_$1, chooses the same $\epsilon$ as our proposed algorithms, i.e., $\epsilon=3\times 10^{-3}\sqrt{m}$. The second one, referred to as MBA$\_$2, takes $\epsilon=1.2\|0.01e\|_2$, which is as the same as that in \cite{Zeng2021Analysis}.  All the other parameters for MBA$\_$1 and MBA$\_$2 are determined as suggested in \cite{Zeng2021Analysis}. 

\renewcommand{\arraystretch}{1.2}
\begin{table}[htp]
	\caption{\label{Algorithm comparation} CPU, ReeErr for Gaussian noise recovery with oversampled DCT matrices.} 
	\resizebox{\textwidth}{36mm}{
		\begin{tabular}{cccccccc} \hline
			s\quad F\quad D& &spg$\ell_{1}$&PPGA$\_$ML&PPGA$\_$NL&MBA$\_$1&MBA$\_$2\cr  \hline
			8\quad 5\quad 2 
			&CPU&0.1551 &0.0968 &0.0841 &1.0518 &0.2143 \cr    
			&ReeErr&2.8587e-02&\textbf{1.9687e-03}&\textbf{1.9687-03}&2.3131e-03&2.2823e-03\cr    
			8\quad 5\quad 3	
			&CPU& 0.1231&0.1315&0.1104&0.8697&0.1933\cr   
			&ReeErr&4.4835e-03&6.4040e-04&\textbf{6.4038e-04}&6.4730e-04&6.7796e-04\cr  
			8\quad 15\quad 2
			&CPU&0.1575&1.7223&2.2918&200.7038&4.2432\cr   
			&ReeErr&4.6736e-01	&1.4949e-01&1.4949e-01&1.4906e-01&\textbf{1.4813e-01}\cr
			8\quad 15\quad 3
			&CPU& 0.1894 &9.2470 &13.5070 &563.3438 &32.4322 \cr   
			&ReeErr& 3.3159e-01&3.9199e-02&3.9198e-02&4.0596e-02&\textbf{3.7558e-02}\cr
			12\quad 5\quad2
			&CPU& 0.1364 &0.7952 &0.7834 &9.7543 &2.8424 \cr   
			&ReeErr& 1.3524e-01&\textbf{3.6374e-02}&\textbf{3.6374e-02}&3.6647e-02	&3.6384e-02\cr       
			12\quad5\quad3	
			&CPU& 0.1614 &2.9272 &3.4987 &145.3295 &6.7533 \cr   
			&ReeErr& 5.9235e-02&3.7540e-03&3.7566e-03&\textbf{3.2676e-03}&3.8409e-03\cr  
			12\quad15\quad2	
			&CPU& 0.1729 &2.6958 &3.1855 &120.9879 &9.4324 \cr    
			&ReeErr& 5.7682e-01&\textbf{1.6896e-01}&\textbf{1.6896e-01}&1.6960e-01&2.0444e-01\cr 
			12\quad15\quad3   
			&CPU& 0.1906 &221.3362 &31.9408 &1459.4108 &359.7682 \cr  
			&ReeErr& 5.2954e-01&\textbf{5.9615e-01}&6.3612e-01&6.9707e-01&1.4882e+00\cr  \hline
	\end{tabular}}
\end{table} 

Table \ref{Algorithm comparation} presents the computational results (averaged over the 20 trials) including the CPU time and the recovery error of all compared algorithms. Since PPGA costs too much computational time when $(s,F,D) = (12,15,3)$, we do not include PPGA here. It can be observed from Table \ref{Algorithm comparation} that compared with MBA, our proposed PPGA$\_$ML and PPGA$\_$NL spend less CPU time to converge and obtain recovered signals with smaller recovery error in most cases.
\subsubsection{Comparision with other sparsity promoting models}\label{6_3_2}
This subsection is devoted to the comparison on different sparse promoting models, including $\ell_{1}/\ell_{2}$, $\ell_{1}$, $\ell_{1}-\ell_{2}$ and $\ell_{1/2}$ for Gaussian noise sparse recovery with Gaussian matrices. Here, the $\ell_{1}$, $\ell_{1}-\ell_{2}$ and $\ell_{1/2}$ models are the least square problems with the corresponding sparse promoting regularizers. The solutions of the $\ell_{1/2}$ and $\ell_{1}-\ell_{2}$ are obtained by the half-thresholding method \cite{Xu2012regularization} and the forward-backward splitting (FBS) method \footnote{We use the Matlab code at https://github.com/mingyan08/ProxL1-L2 with default parameter settings for $\ell_{1}-\ell_{2}$ (FBS).} \cite{Yifei2018Fast}, respectively.

The experimental setup follows that of \cite{Yifei2018Fast}. Specifically, we consider a signal $x$ of length $n = 512$ and $s = 130$ non-zero elements. The number of measurements $m$ varies from 230 to 330. The sensing matrix $A$ is generated by normalizing each column of a Gaussian matrix to be zero-mean and unit norm. The stand deviation of the white Gaussian noise is set to be $\sigma = 0.1$. We use the mean-square-error (MSE) to quantify the recovery performance, i.e., MSE = $\|x^{\star} - x_{g}\|_{2}$, where $x^{\star}$ is the output of the corresponding algorithm. If the support of the ground truth signal is known, denoted as $\Lambda = {\rm supp}(x_{g})$, one can compute the MSE of an oracle solution as $\sigma^{2}{\rm tr}(A_{\Lambda}^{T}A_{\Lambda})^{-1}$ as benchmark, where $A_{\Lambda}$ denotes the columns of matrix $A$ restricted to $\Lambda$.

In our experiments, the parameters for $\ell_1-\ell_2$ and $\ell_1$ models are chosen as the same as those in \cite{Yifei2018Fast}. The parameters of $\ell_{1/2}$ model are determined as suggested in \cite{Xu2012regularization} and the parameter $\mu$ is set to be 0.05. For the proposed PPGA$\_$ML and PPGA$\_$NL, we set $\epsilon=0.05\sqrt{m}$, $d = 10^{7}$, $\eta=0.5$, $a=10^{-8}$ and $N=4$. Since  the proposed algorithms are designed for solving problem (\ref{l1_l2_minimization_problem_constraint_envelop}) which is an approximate model to model (\ref{l1/l2_minimization_problem_constraint}), the smaller $\lambda$ in model  (\ref{l1_l2_minimization_problem_constraint_envelop}) is chosen, the closer model (\ref{l1_l2_minimization_problem_constraint_envelop}) to model (\ref{l1/l2_minimization_problem_constraint}). Hence, we initial set $\lambda=0.01$, then half it in every ten iterations and fix it when the total number of iterations exceeds 500. 

Figure \ref{noisy2} presents the average MSE of the compared algorithms over 20 instances. One can observe that the $\ell_{1}/\ell_{2}$ based model (solved by PPGA$\_$ML and PPGA$\_$NL) achieves the smallest MSE among all the competing sparse promoting models when $m\ge 250$.  Both PPGA$\_$ML and PPGA$\_$NL perform very similarly in this experiment. 
As considered in \cite{Xu2012regularization}, we show in Table \ref{computation time for the Gaussian noise} the mean and standard deviation of MSE as well as computational time of the compared algorithms at $m = 238, 250, 276, 300$. It is demonstrated in Table \ref{computation time for the Gaussian noise} that the proposed algorithms spend much less CPU time to converge while achieve smaller MSE than other compared models when $m=250, 276, 300$.

\begin{figure}[!htbp]
\centering
\includegraphics[width=0.8\linewidth]{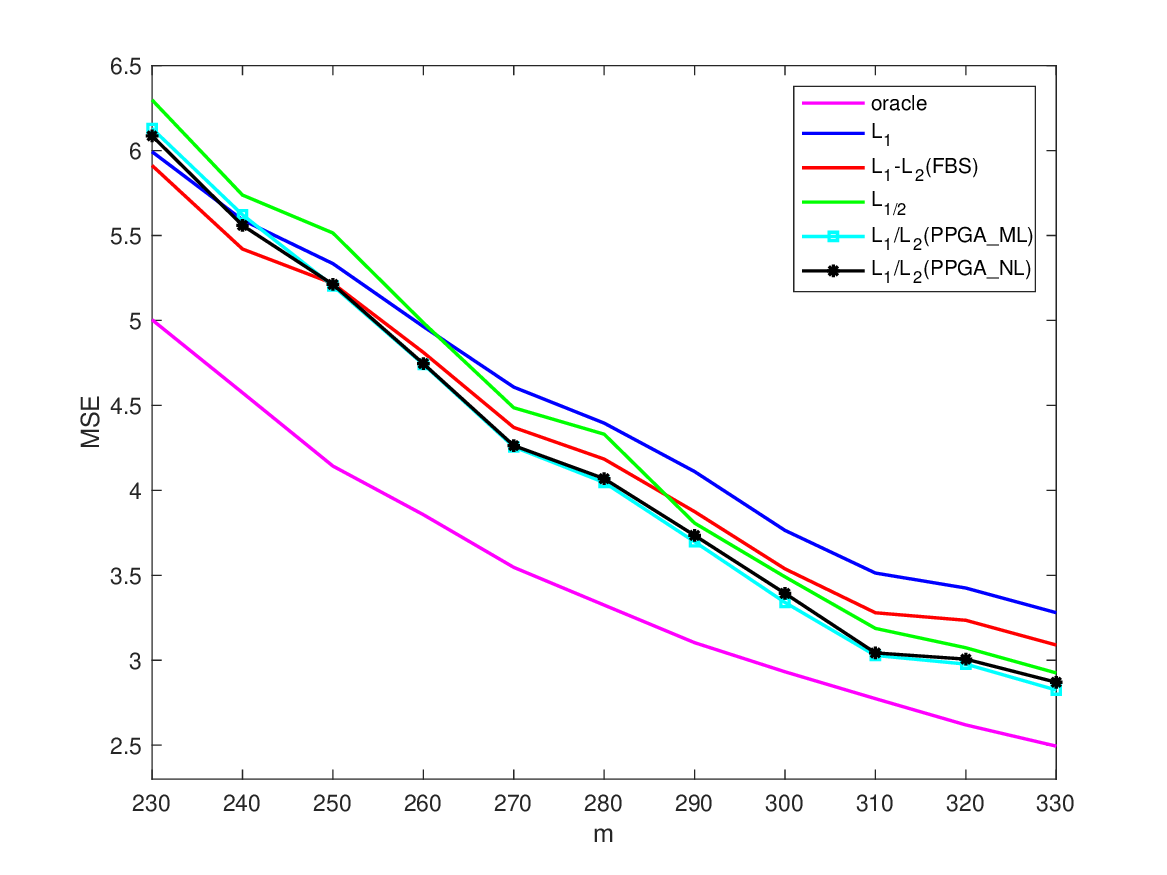} 
\caption{Plots of MSE versus $m$ for the Gaussian noise case with Gaussian matrices.}\label{noisy2}
\end{figure}

\renewcommand{\arraystretch}{1.3}
\begin{table}[htp]
\caption{\label{computation time for the Gaussian noise} MSE and CPU for Gaussian noise recovery with Gaussian matrices.} 
\resizebox{\textwidth}{28mm}{
\begin{tabular}{ccccccc} \hline
    Methods&m&MSE&CPU&m&MSE&CPU  \cr\hline
    Oracle&                    &  4.6338(0.0532)                        &                    &   & 4.1429(0.0669)                    &\cr   
    $\ell_{1}$&&5.8823(0.5491)&0.5773(0.1290)&&0.5593(0.0979)&5.4922(0.6880)   \cr               
    $\ell_{1}/\ell_{2}$(PPGA$\_$ML)&       & 5.6732(0.6699) & 0.1031(0.0164) &  & \textbf{5.2034}(0.8307) & 0.1298(0.0240)\cr
    $\ell_{1}/\ell_{2}$(PPGA$\_$NL)&   238   & 5.6612(0.6423) & \textbf{0.0634}(0.0171) & 250 & 5.2121(0.8217) & \textbf{0.0699}(0.0250)\cr
    $\ell_{1}-\ell_{2}$(FBS)& & \textbf{5.5579}(0.5955) & 0.9048(0.4324)& & 5.2177(0.7367) & 0.9007(0.4227)\cr
    $\ell_{1/2}$&                 & 5.8895(0.6915) & 5.3404(1.9407)&  & 5.5149(0.8703) & 4.9879(1.9255)\cr
    Oracle&                    &    3.4251(0.0430)  &                    &   & 2.9320(0.0246)  &\cr
     $\ell_{1}$&&4.6324(0.6524)&0.6700(0.0536)&&3.8959(0.5256)&1.0740(0.6979)   \cr               
    $\ell_{1}/\ell_{2}$(PPGA$\_$ML)&     & \textbf{4.1928}(0.7112) & 0.1491(0.0339) &  & \textbf{3.3343}(0.4409) & 0.1379(0.0136)\cr
    $\ell_{1}/\ell_{2}$(PPGA$\_$NL)&    276     & 4.2022(0.6985) & \textbf{0.0948}(0.0402)&  300 & 3.3618(0.4621) & \textbf{0.0778}(0.0210)\cr
    $\ell_{1}-\ell_{2}$(FBS)& & 4.2906(0.6378) & 0.7995(0.3518)& & 3.5378(0.5497) & 0.3443(0.1047)\cr
    $\ell_{1/2}$&                 & 4.3665(0.6693) & 4.4069(1.4127)&  & 3.4908(0.5393) & 3.5280(1.2859)\cr   
        \hline
        \end{tabular}}
\end{table}

\section{Conclusions}\label{S7}
In this paper, we study algorithms for solving $\ell_{1}/\ell_{2}$ sparse signal recovery problems. A commonly used $\ell_{1}/\ell_{2}$ model for Gaussian noise compressed sensing is to minimize $\ell_{1}/\ell_{2}$ over an elliptic constraint. We first propose to smooth the indicator function on the elliptic constraint by a smooth fractional function. It is proven that stationary points of the proposed model tend to those of the elliptically constrained problem as the smoothing parameter tends to zero. Inspired by the parametric method of the single-ratio fractional programming, we develop a parameterized proximal-gradient algorithm (PPGA) and its line search counterpart (PPGA$\_$L) for solving the proposed problem. The global convergence of the sequences generated by PPGA and PPGA$\_$L with monotone objective values is established. Numerical experiments demonstrate that our proposed algorithms is efficient for $\ell_{1}/\ell_{2}$ sparse signal recovery in both noiseless and noise cases, compared with the state-of-the-art methods.

\section*{Acknowledgments} We would like to thank Dr Chao Wang for his discussion on the $\ell_{1}/\ell_{2}$$\_$box Matlab code.

\bibliographystyle{unsrtnat}
\bibliography{refer1}  






\end{document}